\numberwithin{equation}{section}
\theoremstyle{plain}
\newtheorem{thm}{Theorem}[section]
\newtheorem{lem}[thm]{Lemma}
\newtheorem{prop}[thm]{Proposition}
\newtheorem{cor}[thm]{Corollary}
\theoremstyle{definition}
\theoremstyle{remark}
\newtheorem{rem}[thm]{Remark}
\begin{document}
\title[Lorentzian homogeneous Ricci-flat metrics]{Lorentzian homogeneous Ricci-flat metrics  \\ 
on almost abelian Lie groups}
\author{Yuichiro Sato \and Takanao Tsuyuki}

\date{\today}

\address{Global Education Center, Waseda University, 
Nishi-waseda, 1-6-1, Shinjuku-ku, Tokyo, 169-0805, Japan}

\email{yuichiro-sato@aoni.waseda.jp}

\address{Faculty of Informatics, Hokkaido Information University, 
Nishi-nopporo 59-2, Ebetsu, Hokkaido, 069-8585, Japan}

\email{tsuyuki@do-johodai.ac.jp}

\subjclass[2020]{Primary 53B30; Secondary 53C30}

\keywords{Lorentzian geometry, Ricci-flat metric, 
Almost abelian Lie group}

\begin{abstract}
When the identity component of the full isometry group of a four-dimensional spacetime acts simply transitively, the unique Ricci-flat metric is the Petrov solution.
This isometry group is almost abelian; that is, its Lie algebra contains an abelian ideal of codimension one.
In this paper, we study Lorentzian left-invariant metrics on almost abelian Lie groups of dimension four or higher. 
In particular, we construct a Ricci-flat but non-flat metric that generalizes the Petrov solution to arbitrarily high dimensions.
The generalized solution is geodesically complete and admits closed timelike curves. 
The construction of the closed timelike curves is new even in the four-dimensional Petrov solution, as it requires no identification of coordinates.
\end{abstract}

\maketitle

\section{Introduction}
A homogeneous Ricci-flat Riemannian manifold is necessarily flat \cite{MR402650}.
Even for the pseudo-Riemannian case, Ricci-flat manifolds of dimension three or lower are flat.
Thus, homogeneous, Ricci-flat, and non-flat metrics can exist only for pseudo-Riemannian manifolds of dimension four or higher.

In the theory of general relativity, the metrics on vacuum spacetimes (without the cosmological constant) are Ricci-flat.
Since our spacetime is a Lorentzian manifold of dimension at least four, a homogeneous, Ricci-flat, and non-flat vacuum spacetime can indeed exist.
For the four-dimensional case, the following theorem is known:

\begin{thm}[{\cite[Theorem~12.2]{MR2003646}}]
    The only Ricci-flat metric admitting a simply-transitive four-dimensional Lie group as its maximal isometry group is given by
    \begin{align}
        k^2 ds^2 =e^{x_4}
        \left[ 
        \cos(\sqrt{3} x_{4})(-dx_{1}^{2} + dx_{2}^{2}) 
        - 2\sin(\sqrt{3} x_4)dx_{1} dx_{2} 
        \right] 
        + e^{-2x_4} dx_3^{2} + dx_4^{2},
        \label{Petrov_sol}
    \end{align}
    where $k$ is an arbitrary constant.
\end{thm}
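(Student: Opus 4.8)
The plan is to use the fact that a simply-transitive isometry group identifies the spacetime with a four-dimensional Lie group $G$ carrying a left-invariant Lorentzian metric. Such a metric is completely determined by a Lorentzian inner product $\langle\cdot,\cdot\rangle$ on the Lie algebra $\mathfrak{g}$, so the classification problem becomes algebraic: one must enumerate the pairs $(\mathfrak{g},\langle\cdot,\cdot\rangle)$ for which the induced metric is Ricci-flat but not flat. First I would fix a left-invariant frame $\{e_1,e_2,e_3,e_4\}$ and record the geometry through the structure constants $c^k_{ij}$ defined by $[e_i,e_j]=c^k_{ij}e_k$, together with the constant matrix $g_{ij}=\langle e_i,e_j\rangle$ of signature $(-,+,+,+)$.

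The second step is to express curvature in terms of this algebraic data. Via the Koszul formula the Levi-Civita connection coefficients $\Gamma^k_{ij}$ are linear combinations of the $c^k_{ij}$ with coefficients built from $g_{ij}$, so the Ricci tensor $\mathrm{Ric}_{ij}$ becomes a homogeneous quadratic expression in the structure constants. Imposing $\mathrm{Ric}_{ij}=0$ then yields a finite system of polynomial equations in the $c^k_{ij}$. To make this tractable I would exploit the freedom to change the left-invariant frame by an element of the pseudo-orthogonal group $O(1,3)$, using it both to normalize $\langle\cdot,\cdot\rangle$ and to put the bracket into a standard form.

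The third and most laborious step is the case analysis. I would invoke the Mubarakzyanov-type classification of real four-dimensional Lie algebras and, for each isomorphism type, determine which admissible inner products solve the vacuum equations. Most types either admit no Lorentzian Ricci-flat metric or force the full curvature tensor to vanish; the expectation is that a single type survives with a one-parameter family that reduces, after the residual normalization, to the metric \eqref{Petrov_sol}. The structural heart of this reduction is the observation central to the present paper, namely that the surviving algebra is almost abelian, containing an abelian ideal of codimension one; this collapses the classification dramatically and is precisely the feature the later sections build upon.

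Finally I would verify the two remaining assertions. To confirm that \eqref{Petrov_sol} is Ricci-flat but not flat I would compute its curvature directly in the given coordinates and exhibit a nonvanishing Riemann component, so that flatness is excluded. To establish that the four-dimensional group is the \emph{maximal} isometry group I would solve the Killing equation and show it admits no solutions beyond the four left-invariant fields, so that no strictly larger symmetry group acts. The main obstacle I anticipate is neither the curvature computation nor the maximality check but rather the breadth of the Lie-algebra case analysis together with the proliferation of signature sub-cases, since in Lorentzian signature the timelike direction may sit in many inequivalent positions relative to the derived algebra, and ruling each of these out cleanly is where the real effort lies.
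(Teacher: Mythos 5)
This statement is not proved in the paper at all: it is quoted verbatim from the literature (Theorem~12.2 of the reference cited in the theorem header) and serves only as motivation for the generalization in Theorem~\ref{th}. So there is no in-paper proof to compare against; what can be assessed is whether your outline would actually deliver the result. As a strategy it is the standard one (left-invariant metric $\leftrightarrow$ Lorentzian inner product on $\mathfrak{g}$, Ricci tensor as a quadratic form in structure constants, case analysis over the classification of real four-dimensional Lie algebras), but as written it is a plan rather than a proof: the decisive third step is not carried out, and the conclusion that ``a single type survives'' is stated as an expectation. Since that case analysis is precisely where the entire content of the theorem lies, the proposal does not yet constitute a proof.

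There is also a concrete logical gap in the sieve you describe. You assert that for most isomorphism types the vacuum equations either have no solution or force the full curvature to vanish, and you defer the maximality check to a single Killing-equation computation for the final metric. But the correct trichotomy is: no Ricci-flat metric, Ricci-flat and flat, or Ricci-flat and non-flat \emph{with a maximal isometry group of dimension greater than four}. The third possibility genuinely occurs --- Ricci-flat non-flat left-invariant metrics exist on other four-dimensional Lie groups, but they are plane waves (or locally symmetric), and homogeneous plane waves have multiply transitive isometry groups; this is exactly the phenomenon the paper must handle in its own higher-dimensional argument via Lemmas~\ref{b-iii} and~\ref{c-1} and Proposition~\ref{pr-wave}. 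Without an argument excluding these cases on the grounds of multiply transitive symmetry (rather than on Ricci-flatness or flatness), your case analysis would leave extra candidates standing and the uniqueness claim would not follow. Solving the Killing equation only for the Petrov metric establishes that \emph{its} isometry group is four-dimensional, not that every other Ricci-flat candidate fails the hypothesis of the theorem.
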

This metric is found in Ref.~\cite{MR0164700} and is called the Petrov solution \cite{MR2429726}.
The main purpose of this paper is to generalize this solution to arbitrary dimensions.
The observed spacetime dimension is four, but higher dimensions are necessary for some theories, such as string/M theories \cite{MR3155203,MR1334520}, and are also explored experimentally \cite{ParticleDataGroup:2024cfk}. 

The Killing vectors of the Petrov solution are \cite{MR2003646}
\begin{align*}
    \partial_{x_1},\ \partial_{x_2},\ \partial_{x_3},\ \partial_{x_4}+x_3\partial_{x_3}+\frac12(\sqrt{3}x_1-x_2)\partial_{x_2}-\frac12(x_1+\sqrt{3}x_2)\partial_{x_1}. 
\end{align*}
The first three vectors commute with each other, and the commutators with the fourth can be written as a linear combination of the first three vectors.
In other words, the Lie algebra generated by the Killing vectors has a codimension-one abelian ideal; such a Lie algebra is called almost abelian \cite{MR2995204,MR3049631,MR3772583,MR4533011}.\footnote{The term ``almost abelian'' is sometimes used with different meanings. For example, the almost abelian Lie algebra in Ref.~\cite{MR195916} corresponds to a special case $A=I_{n-1}$ in our notation~\eqref{eA}. Ref.~\cite{MR4462443} excludes abelian algebras from almost abelian algebras.}
A connected Lie group is almost abelian if its Lie algebra is almost abelian.
Almost abelian Lie groups are metabelian, and metabelian Lie groups are equivalent to $2$-step solvable ones. 
Almost abelian Lie groups can be nilpotent (e.g., the three-dimensional Heisenberg group), but 
this is not necessarily the case (e.g., the isometry group of the Petrov solution).
Ricci-flat Lorentzian homogeneous metrics have primarily been studied in the context of four-dimensional groups \cite{MR0164700,MR0524082,MR0475635,MR2003646,MR3223494,MR2429726} or nilpotent groups \cite{MR3263659,MR3954004,MR4101481,MR4319913,MR4609835}.
This work differs from previous studies because we focus on Lie groups that are of arbitrary dimension and not necessarily nilpotent.

In this paper, we show the explicit conditions under which a left-invariant Lorentzian metric on an almost abelian Lie group becomes Ricci-flat and non-flat. 
In particular, we generalize the Petrov solution to arbitrarily higher dimensions.
The main result of this paper is as follows:

\begin{thm} \label{th}
    Let $n\geq 4$, and let $G$ be a simply connected $n$-dimensional almost abelian Lie group. 
    Suppose that $G$ acts simply transitively on a Lorentzian manifold $(G,ds^2)$, and that $G$ is the identity component of the full isometry group of $(G,ds^2)$. 
    Then, up to isometry, the only Ricci-flat metric is
\begin{equation}
ds^{2} = e^{-2\alpha x_{n}}
\left[ 
\cos(2\beta x_{n})(-dx_{1}^{2} + dx_{2}^{2}) 
- 2\sin(2\beta x_{n})dx_{1} dx_{2} 
\right] 
+ \sum_{i = 3}^{n-1} e^{-2\lambda_{i} x_{n}} dx_{i}^{2} + dx_{n}^{2}, 
\label{main}
\end{equation}
where $\alpha, \beta, \lambda_{3}, \ldots, \lambda_{n-1}$ satisfy 
\begin{align}
&\alpha \le0,\quad \beta > 0, \notag \\
&\lambda_{3} > \ldots > \lambda_{n-1}, \label{simp} \\
&2\alpha + \sum_{i=3}^{n-1} \lambda_{i} = 
2\alpha^{2} - 2\beta^{2} + \sum_{i=3}^{n-1} \lambda_{i}^{2} = 0, \label{Ric_flat_cond}
\end{align}
and $x_{1}, \ldots, x_{n}$ 
denote the canonical coordinates of $\mathbb{R}^{n}$. 
\end{thm}

Since a simply connected almost abelian Lie group is diffeomorphic to \(\mathbb{R}^n\), 
the underlying manifold is automatically diffeomorphic to \(\mathbb{R}^n\) whenever such a group acts simply transitively. 
This solution reduces to the Petrov solution \eqref{Petrov_sol} in the case $n=4$ (see Corollary~\ref{pl3}). 

After proving the main theorem, we also show that the solution is geodesically complete. 
That is, every geodesic can be extended to a geodesic defined for all real values of its parameter. 
We also study the causal structure of the solution. We find it to be totally vicious; that is, every point lies on a closed timelike curve (CTC). 
An explicit CTC is constructed without any coordinate identification (see Proposition~\ref{causality}). 
This CTC construction is new even for the four-dimensional case.
Similarly to the Petrov solution, the G{\"o}del solution \cite{MR31841} is homogeneous and admits CTCs. 
Its main differences from the Petrov solution are that it is not a vacuum solution and the isometry group is multiply transitive \cite{MR2003646}. 

This paper is organized as follows. 
In Section~\ref{smet}, we introduce almost abelian Lie groups and left-invariant Lorentzian metrics on them.
In Section~\ref{sric}, we derive the necessary and sufficient conditions for such metrics to be Ricci-flat, as well as conditions for them to be flat.
In Section~\ref{Petrov}, we examine whether the identity component of the full isometry groups of the Ricci-flat metrics is simply transitive and complete the proof of the main theorem.
In Section~\ref{property}, we investigate properties of the generalized Petrov solution, such as geodesic completeness and causal structure. 

\section{Almost abelian Lie groups} \label{smet}

Let $G$ be an $n$-dimensional Lie group, $\mathfrak{g}$ its Lie algebra,
and $\{X_{a}\}_{1 \leq a \leq n}$ left-invariant vector fields on $G$.
Then we define the structure constants ${C^a}_{bc}$ of $\mathfrak{g}$ as
\begin{equation*}
[X_{b}, X_{c}] = \sum_{a=1}^{n} {C^a}_{bc} X_{a}. \label{estr}
\end{equation*}
In addition, we define the left-invariant $1$-forms $\{\omega^{a}\}_{1 \leq a \leq n}$ on $G$ as $\omega^{a}(X_{b}) = \delta^{a}_{b}$. 
A left-invariant metric on $G$ can be written as
\begin{align}
    ds^2 = \sum_{a, b = 1}^{n} \eta_{ab}\omega^a\omega^b, \label{eds}
\end{align}
where $\eta_{ab}$ are constants.

In this paper, we focus on the case where $G$ is an almost abelian Lie group. 
Let $\mathfrak{g}$ be an $n$-dimensional real Lie algebra. 
It is \textit{almost abelian} if it contains an $(n-1)$-dimensional abelian ideal $\mathfrak{a}$ \cite{MR4533011}.
A connected $n$-dimensional Lie group $G$ is \textit{almost abelian} if its Lie algebra $\mathfrak{g}$ is almost abelian.
In this setting, the left-invariant vectors are pairwise commuting except for one, which we denote $X_n$. The structure constants become
\begin{align}
    {C^{i}}_{jk}={C^{n}}_{jk}={C^{n}}_{nk}=0,
\end{align}
where the indices $i,j,k$ run over $1,\dots,n-1$.
We define the possibly non-zero components ${C^{i}}_{nj}$ to be components of the associated matrix $A = [A^{i}_{j}]_{1 \leq i, j \leq n-1}$ 
of $\mathfrak{g}$ with respect to 
$\{ X_{a} \}_{1 \leq a \leq n}$;
\begin{align}
[X_n,X_j] = \sum_{i = 1}^{n-1} A^{i}_{j}X_i \quad (1 \leq j \leq n-1). \label{eA} 
\end{align}

Let $\mathrm{M}_{m}\mathbb{R}$ denote 
the set of $m \times m$ real square matrices. 
The Jacobi identity holds for arbitrary 
$A \in \mathrm{M}_{n-1}\mathbb{R}$. 
For example,
\begin{align}
    \sum_{a=1}^{n} \left( {C^i}_{j a}{C^a}_{n n}+{C^i}_{n a}
    {C^a}_{n j}+
    {C^i}_{n a}{C^a}_{j n} \right) 
    = \sum_{k=1}^{n-1} \left( A^{i}_{k} A^{k}_{j} -A^{i}_{k} A^{k}_{j} \right) 
    = 0, \label{eqC}
\end{align}
and the others also identically vanish.

In this paper, we denote a left-invariant Lorentzian metric 
by $ds^{2}$ on an almost abelian Lie group $G$.
Then the pair $(G, ds^{2})$ is a homogeneous Lorentzian manifold, 
and the given metric corresponds to 
the non-degenerate symmetric bilinear form
$\langle \, , \, \rangle$ with Lorentzian signature on the Lie algebra $\mathfrak{g}$.
In addition, $\mathfrak{a} \subset \mathfrak{g}$ denotes
a codimension-one abelian ideal of the almost abelian Lie algebra $\mathfrak{g}$.

\begin{lem} \label{metric}
Let $G$ be an $n$-dimensional almost abelian Lie group,
$ds^{2}$ a left-invariant Lorentzian metric on $G$.
Then there exists a basis 
$\{X_{1}, \ldots, X_{n-1}, X_{n}\}$ of $\mathfrak{g}$ such that
\begin{equation*}
{\rm span}_{\mathbb{R}}\{ X_{1}, \ldots, X_{n-1} \} = \mathfrak{a}, \quad
\mathfrak{g} = \mathfrak{a} \rtimes \mathbb{R} X_{n}
\end{equation*}
and the bilinear form $\langle \, , \, \rangle$ on $\mathfrak{g}$ can be expressed by one of the following matrices:
\begin{equation}
({\rm a}) :
\begin{bmatrix}
\, 1 & & & \, \, \\
\,  & \ddots & & \, \, \\
\,  & & 1 & \, \, \\
\,  & & & -1 \, \,
\end{bmatrix}
, \quad
({\rm b}) :
\begin{bmatrix}
\, -1 & & & \, \, \\
\, & 1 & & \, \, \\
\, & & \ddots & \, \, \\
\, & & & 1 \, \,
\end{bmatrix}
, \quad
({\rm c}) :
\begin{bmatrix}
\, 1 & & & & \, \, \\
\, & \ddots & & & \, \, \\
\, & & 1 & & \, \, \\
\, & & & 0 & 1 \, \, \\
\, & & & 1 & 0 \, \, \\
\end{bmatrix}
. \label{eq:eta}
\end{equation}
\end{lem}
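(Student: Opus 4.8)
The plan is to classify the restriction of the Lorentzian bilinear form $\langle\,,\,\rangle$ to the codimension-one abelian ideal $\mathfrak{a}$ and then build an adapted basis in each resulting case. Since $\mathfrak{g}$ is $n$-dimensional with exactly one negative direction, the orthogonal complement $\mathfrak{a}^{\perp}$ is one-dimensional, and a line in a Lorentzian space is either timelike, spacelike, or null. These three possibilities correspond exactly to the restriction $\langle\,,\,\rangle|_{\mathfrak{a}}$ being positive definite, non-degenerate of Lorentzian signature, or degenerate with one-dimensional radical; I expect them to produce the three normal forms (a), (b), and (c), respectively.

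First I would dispose of the two non-degenerate cases. If $\mathfrak{a}^{\perp}$ is timelike, then $\mathfrak{a}$ is positive definite; I normalize a generator of $\mathfrak{a}^{\perp}$ to a unit timelike vector $X_{n}$ with $\langle X_{n},X_{n}\rangle=-1$ and apply Gram--Schmidt inside $\mathfrak{a}$ to obtain an orthonormal basis $X_{1},\dots,X_{n-1}$, which yields form (a). If instead $\mathfrak{a}^{\perp}$ is spacelike, then $\langle\,,\,\rangle|_{\mathfrak{a}}$ has Lorentzian signature; I take $X_{n}$ a unit spacelike generator of $\mathfrak{a}^{\perp}$ and choose inside $\mathfrak{a}$ a pseudo-orthonormal basis consisting of one timelike vector $X_{1}$ and $n-2$ spacelike vectors $X_{2},\dots,X_{n-1}$, which yields form (b). In both cases non-degeneracy of $\mathfrak{a}$ gives $\mathfrak{a}\cap\mathfrak{a}^{\perp}=\{0\}$, so $X_{n}\notin\mathfrak{a}$ and $\mathfrak{g}=\mathfrak{a}\oplus\mathbb{R}X_{n}$; since $\mathfrak{a}$ is an ideal, this is a semidirect sum.

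The remaining case, where $\mathfrak{a}$ is null, is the main obstacle and is where form (c) arises. Here the radical of $\langle\,,\,\rangle|_{\mathfrak{a}}$ coincides with $\mathfrak{a}^{\perp}$ and is one-dimensional, spanned by a null vector which I take as $X_{n-1}$. The induced form on the complement of $\mathbb{R}X_{n-1}$ in $\mathfrak{a}$ is positive definite: splitting off the hyperbolic plane spanned by $X_{n-1}$ and a dual null partner absorbs the single negative direction, leaving a complement of signature $(n-2,0)$. Lifting an orthonormal basis of this complement produces vectors $X_{1},\dots,X_{n-2}\in\mathfrak{a}$ that are spacelike, mutually orthonormal, and orthogonal to $X_{n-1}$. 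The delicate step is constructing $X_{n}$: starting from any $Y\in\mathfrak{g}\setminus\mathfrak{a}$, I use that $\langle Y,X_{n-1}\rangle\neq0$ (otherwise $X_{n-1}$ would be orthogonal to all of $\mathfrak{g}=\mathfrak{a}+\mathbb{R}Y$, contradicting non-degeneracy) to rescale $Y$ so that $\langle Y,X_{n-1}\rangle=1$, subtract suitable multiples of $X_{1},\dots,X_{n-2}$ to kill the pairings with those vectors, and finally subtract the appropriate multiple of $X_{n-1}$ to arrange $\langle X_{n},X_{n}\rangle=0$. Because $X_{n-1}$ is orthogonal to all of $\mathfrak{a}$, this last correction leaves the earlier orthogonalities intact, and one checks that $\{X_{1},\dots,X_{n-2},X_{n-1},X_{n}\}$ realizes form (c) with $X_{n}\notin\mathfrak{a}$, so once more $\mathfrak{g}=\mathfrak{a}\rtimes\mathbb{R}X_{n}$.
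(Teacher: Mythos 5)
Your proposal is correct and follows essentially the same route as the paper: a trichotomy on the restriction of $\langle\,,\,\rangle$ to the codimension-one ideal $\mathfrak{a}$ (positive definite, Lorentzian, or degenerate with one-dimensional radical), producing the normal forms (a), (b), (c) respectively. The only difference is that you spell out the construction of the null partner $X_{n}$ in the degenerate case, where the paper simply asserts the existence of a quasi-orthonormal basis; your explicit argument is a valid justification of that step.
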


\begin{proof}
Since there exists a codimension-one abelian ideal
$\mathfrak{a} \subset \mathfrak{g}$,
the induced symmetric bilinear form on $\mathfrak{a}$ is either Riemannian, Lorentzian, or degenerate.
In the Riemannian case,
we can take an orthonormal basis $\{X_{1}, \ldots, X_{n-1}\}$ of $\mathfrak{a}$.
Therefore, it suffices to choose a unit timelike vector $X_{n}$ of the orthogonal complement $\mathfrak{a}^{\bot}$.
In the Lorentzian case,
we can take an orthonormal basis $\{X_{1}, \ldots, X_{n-1}\}$ of $\mathfrak{a}$ in the following sense;
\begin{equation*}
\langle X_{1}, X_{1} \rangle = -1, \quad
\langle X_{i}, X_{i} \rangle = 1 \quad (2 \leq i \leq n-1).
\end{equation*}
Similarly, 
it suffices to choose a unit spacelike vector $X_{n}$ in the orthogonal complement $\mathfrak{a}^{\bot}$.
In the degenerate case,
since the bilinear form on $\mathfrak{g}$ is non-degenerate,
we can take a quasi-orthonormal basis of $\mathfrak{g}$ such that
\begin{equation*}
\langle X_{j}, X_{j} \rangle = \langle X_{n-1}, X_{n} \rangle = 1
\quad (1 \leq j \leq n-2),
\end{equation*}
the others are zero,
and ${\rm span}_{\mathbb{R}}\{ X_{1}, \ldots, X_{n-1} \} = \mathfrak{a}$.
\end{proof}

For a given almost abelian Lie group $(G, ds^{2})$ 
with a left-invariant Lorentzian metric, 
it suffices to deal with $(\mathfrak{g}, \langle \, , \, \rangle)$ 
instead of $(G, ds^{2})$. 
We call it the Lorentzian almost abelian Lie algebra for $(G, ds^{2})$.

For any $X \in \mathfrak{g}$, 
we define the operator $L_{X} : \mathfrak{g} \rightarrow \mathfrak{g}$ as
\begin{equation}
2\langle L_X(Y), Z \rangle
= -\langle X, [Y, Z] \rangle + \langle Y, [Z, X] \rangle + \langle Z, [X, Y] \rangle,  \label{left_op}
\end{equation}
where $Y, Z \in \mathfrak{g}$.
Then the curvature operator is defined by 
\[ R(X, Y) := [L_{X}, L_{Y}] - L_{[X, Y]} \quad (X, Y \in \mathfrak{g}). \]
The Ricci tensor 
${\rm Ric} : \mathfrak{g} \times \mathfrak{g} \rightarrow \mathbb{R}$
of $(\mathfrak{g}, \langle \, , \, \rangle)$ can be computed using the following formula. 

\begin{prop}[{\protect\cite[Corollary~7.38]{MR2371700},
\protect\cite[Lemma~2.1]{MR4752280}}] 
\label{Ric_curv}
Let $\{ e_{a} \}_{1 \leq a \leq n}$ be an orthonormal basis of 
$(\mathfrak{g}, \langle \, , \, \rangle )$. Then 
\begin{align*}
{\rm Ric}(X, Y) &= -\frac{1}{2} \sum_{a=1}^{n} 
\langle [X, e_{a}], [Y, e_{a}] \rangle \varepsilon_{a} + 
\frac{1}{4}\sum_{a,b=1}^{n} \langle X, [e_{a}, e_{b}] \rangle 
\langle Y, [e_{a},e_{b}] \rangle \varepsilon_{a}\varepsilon_{b} \\
&\qquad \quad -\frac{1}{2} K(X,Y) 
- \frac{1}{2}(\langle [H,X], Y \rangle + \langle [H, Y], X \rangle), 
\end{align*}
where $X, Y \in \mathfrak{g}$, 
$\varepsilon_{a} = \langle e_{a}, e_{a} \rangle = \pm 1$, 
and $K$ is the Killing form of $\mathfrak{g}$ defined by 
\[ K(X, Y) := 
\mathrm{tr} \, \left[ (\mathrm{ad} \, X) \, (\mathrm{ad} \, Y) \right],\]
and $H$ is the mean curvature vector of 
$(\mathfrak{g}, \langle \, , \, \rangle)$ defined by 
\[ \langle H, X \rangle := \mathrm{tr} \, (\mathrm{ad} \, X). \]
\end{prop}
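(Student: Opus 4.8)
The plan is to recognize $L_X$ as the Levi-Civita connection of the left-invariant metric and then contract the curvature operator directly, leaning on two structural identities throughout. First I would observe that the defining relation \eqref{left_op} is exactly the Koszul formula specialized to left-invariant vector fields: since $\langle Y, Z \rangle$ is constant whenever $Y,Z$ are left-invariant, the three directional-derivative terms in the general Koszul formula vanish and what remains is precisely \eqref{left_op}, so $L_X Y = \nabla_X Y$. Consequently each map $Y \mapsto L_X Y$ is skew-adjoint, $\langle L_X U, V \rangle = -\langle U, L_X V \rangle$ (metric compatibility), and $L_X Y - L_Y X = [X,Y]$ (vanishing torsion). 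Rewriting \eqref{left_op} with the metric-adjoint $(\mathrm{ad}_X)^{*}$ also yields the decomposition $2 L_X Y = [X,Y] - (\mathrm{ad}_X)^{*} Y - (\mathrm{ad}_Y)^{*} X$. These three facts are the only tools the computation needs.

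Next I would expand the Ricci tensor as the curvature trace $\mathrm{Ric}(X,Y) = \sum_{a} \varepsilon_a \langle R(e_a,X)Y, e_a \rangle$ over an orthonormal basis $\{e_a\}$ with $\varepsilon_a = \langle e_a, e_a \rangle$, insert $R(e_a,X) = [L_{e_a}, L_X] - L_{[e_a,X]}$, and use skew-adjointness to move each connection operator off of $e_a$ and onto the remaining factors. This converts the three resulting pieces into traces. The decisive bookkeeping step is the identity $\sum_{a} \varepsilon_a\, L_{e_a} e_a = H$, which I verify directly from \eqref{left_op}: one computes $2\langle L_{e_a} e_a, X \rangle = 2\langle [X,e_a], e_a \rangle$, so summing against $\varepsilon_a$ reproduces $\mathrm{tr}(\mathrm{ad}\,X) = \langle H, X \rangle$. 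This is exactly the channel through which the mean-curvature vector enters the final expression.

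The remaining work is to sort the expanded sums into the four advertised groups. The contributions quadratic in a single commutator collapse into $-\tfrac{1}{2}\sum_{a} \langle [X,e_a],[Y,e_a]\rangle \varepsilon_a$; the cross terms produced by the co-adjoint parts $(\mathrm{ad}_X)^{*}, (\mathrm{ad}_Y)^{*}$, after symmetrizing over the two basis indices, assemble into the structure-constant sum $\tfrac{1}{4}\sum_{a,b} \langle X,[e_a,e_b]\rangle \langle Y,[e_a,e_b]\rangle \varepsilon_a \varepsilon_b$; the contractions arising from $L_{[e_a,X]}$ reorganize into the Killing form $K(X,Y) = \mathrm{tr}(\mathrm{ad}\,X\,\mathrm{ad}\,Y)$ with coefficient $-\tfrac{1}{2}$; and the leftover pieces containing $H$ combine into $-\tfrac{1}{2}(\langle [H,X], Y \rangle + \langle [H,Y], X \rangle)$. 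Symmetrizing in $X,Y$ at the end confirms that the output is a genuine symmetric bilinear form, matching the stated formula.

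The main obstacle is not conceptual but the sign-and-index accounting in indefinite signature: every application of skew-adjointness and every basis contraction carries an $\varepsilon_a$ (or $\varepsilon_a \varepsilon_b$) factor, and because $(\mathrm{ad}_X)^{*}$ is not itself a commutator, the quadratic terms must be regrouped carefully before the Killing-form and mean-curvature contributions separate cleanly. Ultimately the formula follows, as in \cite[Corollary~7.38]{MR2371700}, from contracting the left-invariant curvature tensor, so all the risk lies in the bookkeeping rather than in any single idea.
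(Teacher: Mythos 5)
The paper does not prove Proposition~\ref{Ric_curv} at all: it is quoted from the literature (O'Neill-type curvature formulas and \cite[Lemma~2.1]{MR4752280}), so there is no internal proof to compare against. Your outline is the standard derivation from those sources, and the structural facts you isolate are all correct: \eqref{left_op} is indeed the Koszul formula with the derivative terms dropped, so $L_X=\nabla_X$ on left-invariant fields; skew-adjointness and $L_XY-L_YX=[X,Y]$ follow immediately; the decomposition $2L_XY=[X,Y]-(\mathrm{ad}_X)^{*}Y-(\mathrm{ad}_Y)^{*}X$ checks out; and the identity $\sum_a\varepsilon_a L_{e_a}e_a=H$ is exactly the channel through which the mean-curvature terms enter (one verifies $\langle L_{e_a}e_a,X\rangle=\langle[X,e_a],e_a\rangle$ and sums). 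Expanding $\mathrm{Ric}(X,Y)=\sum_a\varepsilon_a\langle R(e_a,X)Y,e_a\rangle$ and applying the Koszul formula to the $L_{[e_a,X]}e_a$ term already produces the first term $-\tfrac12\sum_a\varepsilon_a\langle[X,e_a],[Y,e_a]\rangle$ and the Killing-form term $-\tfrac12K(X,Y)$ with the right signs, which is a good consistency check on your conventions. The only caveat is that the final regrouping --- combining $-\langle L_XY,H\rangle$, $\sum_a\varepsilon_a\langle L_{e_a}Y,L_Xe_a\rangle$, and the leftover $-\tfrac12\langle Y,\sum_a\varepsilon_a(\mathrm{ad}_{e_a})^2X\rangle$ into the $\tfrac14\sum_{a,b}\varepsilon_a\varepsilon_b\langle X,[e_a,e_b]\rangle\langle Y,[e_a,e_b]\rangle$ term and the symmetrized $H$-terms --- is asserted rather than carried out; that is where all the residual risk sits, but it is pure bookkeeping and the claimed endpoint is the known formula.
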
 

The Lorentzian almost abelian Lie algebra 
$(\mathfrak{g}, \langle \, , \, \rangle)$ for $(G, ds^{2})$ 
is called \textit{flat} if $R = 0$ holds, 
and it is called \textit{Ricci-flat} if ${\rm Ric} = 0$ holds. 
The definitions correspond to the flatness and Ricci-flatness of 
$(G, ds^{2})$, respectively. 

\begin{lem} \label{Kill} 
Let $\mathfrak{g}$ be an $n$-dimensional almost abelian Lie group with 
the associated matrix $A \in \mathrm{M}_{n-1}\mathbb{R}$ 
with respect to $\{X_{1}, \ldots, X_{n}\}$. 
Then the Killing form $K$ is given by 
\[ K(X_{i}, X_{j}) = K(X_{n}, X_{j}) = 0, \quad 
K(X_{n}, X_{n} ) = \mathrm{tr} \, A^{2} \]
for $i, j = 1, \ldots, n-1$.
\end{lem}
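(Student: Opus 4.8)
The plan is to compute the Killing form directly from its definition $K(X,Y)=\operatorname{tr}\bigl[(\operatorname{ad}X)(\operatorname{ad}Y)\bigr]$ by writing out the adjoint operators in the basis $\{X_1,\dots,X_n\}$. The crucial observation is that, because $\mathfrak{a}=\operatorname{span}_{\mathbb{R}}\{X_1,\dots,X_{n-1}\}$ is an abelian ideal, the adjoint action of any element of $\mathfrak{a}$ is very degenerate: for $i\le n-1$ we have $[X_i,X_j]=0$ for all $j\le n-1$, and $[X_i,X_n]=-[X_n,X_i]=-\sum_k A^k_i X_k\in\mathfrak{a}$. Hence $\operatorname{ad}X_i$ maps $\mathfrak{a}$ to $0$ and maps $X_n$ into $\mathfrak{a}$. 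The operator $\operatorname{ad}X_n$, by contrast, preserves $\mathfrak{a}$ (acting there by the matrix $A$ via \eqref{eA}) and annihilates $X_n$ since ${C^a}_{nn}=0$.

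The key steps, carried out in order, are as follows. First I would record the matrix of each $\operatorname{ad}X_a$ relative to the ordered basis $(X_1,\dots,X_{n-1},X_n)$. For $i\le n-1$, the matrix of $\operatorname{ad}X_i$ has nonzero entries only in its last column (the image of $X_n$), and that column lies in the $\mathfrak{a}$-block; in particular it is strictly upper-block-triangular with a zero diagonal. For $\operatorname{ad}X_n$, the matrix is block-diagonal with the $(n-1)\times(n-1)$ block equal to $A$ and a single $0$ in the last diagonal slot. Second, I would evaluate each entry of the lemma by composing the relevant pair of operators and taking the trace.

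For $K(X_i,X_j)$ with $i,j\le n-1$: the composite $(\operatorname{ad}X_i)(\operatorname{ad}X_j)$ first applies $\operatorname{ad}X_j$, whose image lies in $\mathfrak{a}$, and then applies $\operatorname{ad}X_i$, which annihilates $\mathfrak{a}$; thus the composite is the zero operator, so its trace vanishes. For $K(X_n,X_j)=K(X_j,X_n)$ with $j\le n-1$: here $\operatorname{ad}X_j$ sends $\mathfrak{a}\to0$ and $X_n\mapsto[X_j,X_n]\in\mathfrak{a}$, while $\operatorname{ad}X_n$ preserves $\mathfrak{a}$; so the composite $(\operatorname{ad}X_n)(\operatorname{ad}X_j)$ sends $\mathfrak{a}\to0$ and sends $X_n$ into $\mathfrak{a}$, i.e.\ it too is strictly block-triangular with zero diagonal, whence trace zero. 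Finally, for $K(X_n,X_n)=\operatorname{tr}(\operatorname{ad}X_n)^2$, the operator $\operatorname{ad}X_n$ is block-diagonal with blocks $A$ and $0$, so its square has blocks $A^2$ and $0$, giving $\operatorname{tr}(\operatorname{ad}X_n)^2=\operatorname{tr}A^2$.

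This argument is essentially bookkeeping rather than genuine difficulty, so there is no serious obstacle; the only point requiring care is the trace-zero conclusions for the mixed and $\mathfrak{a}$-block terms. Both follow from the same structural fact, namely that the product operator strictly lowers the filtration $0\subset\mathfrak{a}\subset\mathfrak{g}$ in a way leaving no diagonal contribution, so I would phrase it once in terms of the block form of the matrices to avoid repetition. I expect the computation of $\operatorname{tr}A^2$ to be the cleanest part, following immediately from the block-diagonal form of $\operatorname{ad}X_n$.
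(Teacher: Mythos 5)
Your proposal is correct and follows essentially the same route as the paper: both write out the matrices of $\mathrm{ad}\,X_i$ (nonzero only in the last column, valued in $\mathfrak{a}$) and $\mathrm{ad}\,X_n$ (block-diagonal with blocks $A$ and $0$) in the basis $\{X_1,\dots,X_n\}$ and read off the traces, the paper merely leaving the trace computations implicit. No substantive difference.
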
 

\begin{proof}
By direct computation, we have 
\begin{equation} 
\mathrm{ad} \, X_{i} = 
\begin{bmatrix}
\, O & -\bm{a}_{i} \, \\
\, \bm{0}^{T} & 0 \, \\ 
\end{bmatrix}
, \quad \mathrm{ad} \, X_{n} = 
\begin{bmatrix}
\, A & \bm{0} \, \, \\
\, \bm{0}^{T} & 0 \, \, \\
\end{bmatrix}
, \label{ad_rep}
\end{equation}
where $i = 1, \ldots, n-1$ and $A= [\bm{a}_{1}, \ldots, \bm{a}_{n-1}] 
\in \mathrm{M}_{n-1}\mathbb{R}$.
From the definition of the Killing form, we have the claim. 
\end{proof}

\begin{lem} \label{mean_curv} 
Corresponding to three metrics (a), (b) and (c) in Lemma~\ref{metric}, 
the mean curvature vector $H \in \mathfrak{g}$ 
of the Lorentzian almost abelian Lie algebra 
$(\mathfrak{g}, \langle \, , \, \rangle)$ coincides with 
\[ ({\mathrm a}) :-(\mathrm{tr}\, A)X_{n}, \quad 
({\mathrm b}) :(\mathrm{tr}\, A)X_{n}, \quad 
({\mathrm c}) :(\mathrm{tr}\, A)X_{n-1}, \]
respectively.
\end{lem}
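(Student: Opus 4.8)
The plan is to compute $H$ directly from its defining relation $\langle H, X \rangle = \mathrm{tr}(\mathrm{ad}\, X)$, using the explicit adjoint representations already recorded in \eqref{ad_rep}. First I would read off the traces of the adjoint operators: since $\mathrm{ad}\, X_{i}$ has the strictly off-diagonal block form with vanishing diagonal entries for each $i = 1, \ldots, n-1$, we get $\mathrm{tr}(\mathrm{ad}\, X_{i}) = 0$, whereas $\mathrm{tr}(\mathrm{ad}\, X_{n}) = \mathrm{tr}\, A$. Hence the linear functional $X \mapsto \langle H, X \rangle$ annihilates $X_{1}, \ldots, X_{n-1}$ and sends $X_{n}$ to $\mathrm{tr}\, A$; this single pair of facts determines $H$ once the bilinear form is inverted.

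Next I would write $H = \sum_{a=1}^{n} h^{a} X_{a}$ and solve the linear system $\langle H, X_{b} \rangle = \mathrm{tr}(\mathrm{ad}\, X_{b})$ for each of the three bilinear forms in \eqref{eq:eta}. For metric (a) the form is diagonal with $\langle X_{i}, X_{i} \rangle = 1$ and $\langle X_{n}, X_{n} \rangle = -1$, so the conditions force $h^{i} = 0$ and $-h^{n} = \mathrm{tr}\, A$, giving $H = -(\mathrm{tr}\, A) X_{n}$. For metric (b) the only sign change occurs at $X_{1}$, which is already annihilated by the functional, so the same reasoning yields $h^{i} = 0$ and $h^{n} = \mathrm{tr}\, A$, that is, $H = (\mathrm{tr}\, A) X_{n}$.

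The one place that requires care, and where I expect the only genuine subtlety to lie, is the degenerate metric (c). Here $X_{n-1}$ and $X_{n}$ form a null pair with $\langle X_{n-1}, X_{n} \rangle = 1$ and $\langle X_{n-1}, X_{n-1} \rangle = \langle X_{n}, X_{n} \rangle = 0$, while $\langle X_{j}, X_{j} \rangle = 1$ for $j \leq n-2$. Consequently the pairing swaps the roles of $X_{n-1}$ and $X_{n}$: the condition $\langle H, X_{n-1} \rangle = 0$ now reads $h^{n} = 0$, while the condition $\langle H, X_{n} \rangle = \mathrm{tr}\, A$ reads $h^{n-1} = \mathrm{tr}\, A$. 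Together with $h^{j} = 0$ for $j \leq n-2$, this gives $H = (\mathrm{tr}\, A) X_{n-1}$, as claimed. Everything beyond correctly inverting this off-diagonal null block is routine linear algebra, so there is no substantive obstacle.
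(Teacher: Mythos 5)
Your proof is correct and follows the same route as the paper, which simply notes that the claim is immediate from the adjoint representations in \eqref{ad_rep}; you have merely spelled out the inversion of the bilinear form in each of the three cases, including the null-block swap in case (c). No issues.
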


\begin{proof}
It immediately follows from the formulae~(\ref{ad_rep}). 
\end{proof}

When an $n$-dimensional almost abelian Lie group $G$ is simply connected, 
$G$ is isomorphic to 
$\mathbb{R}^{n}$ with the suitable Lie group structure \cite{MR4533011}. 
By identifying them and using the coordinates of $\mathbb{R}^{n}$, we can express the left-invariant Lorentzian metric as follows.
\begin{prop}
The left-invariant Lorentzian metric of an almost abelian Lie group whose associated matrix is $A$ can be written as
\begin{align}
    ds^2 = d\bm{x}^T
    \begin{bmatrix}
        e^{-A^Tx_n} & \bm{0}\\
        \bm{0}^T & 1
    \end{bmatrix}
    \eta
    \begin{bmatrix}
        e^{-Ax_n} & \bm{0}\\
        \bm{0}^T & 1
    \end{bmatrix}
    d\bm{x}, \label{emet}
\end{align}
where $d\bm{x} = (dx_1,\dots,dx_n)^{T}$ and $\eta$ is one of the matrices in \eqref{eq:eta}.
\end{prop}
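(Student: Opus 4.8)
The plan is to model the $1$-connected group $G$ concretely and read off the left-invariant coframe directly from the Maurer--Cartan form. Since $G$ is almost abelian and $1$-connected, it is isomorphic to the semidirect product $\mathbb{R}^{n-1}\rtimes_{\phi}\mathbb{R}$ with $\phi(t)=e^{tA}$ \cite{MR4533011}, which I would realize faithfully as the matrix group
\[
g=\begin{bmatrix} e^{x_{n} A} & \bm{x}'\\ \bm{0}^{T} & 1\end{bmatrix},\qquad \bm{x}'=(x_{1},\dots,x_{n-1})^{T}.
\]
A quick check of the block product shows the group law is $(\bm{x}',x_{n})\cdot(\bm{y}',y_{n})=(\bm{x}'+e^{x_{n}A}\bm{y}',\,x_{n}+y_{n})$, and differentiating the one-parameter subgroups at the identity recovers $\mathrm{ad}\,X_{n}=A$ on $\mathfrak{a}$, i.e.\ the bracket \eqref{eA}; the coordinates $(x_{1},\dots,x_{n})$ are then exactly the canonical coordinates of $\mathbb{R}^{n}$.

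Next I would compute the left-invariant Maurer--Cartan form $\theta=g^{-1}dg$. Using $g^{-1}=\begin{bmatrix} e^{-x_{n}A} & -e^{-x_{n}A}\bm{x}'\\ \bm{0}^{T} & 1\end{bmatrix}$ and $dg=\begin{bmatrix} A\,e^{x_{n}A}\,dx_{n} & d\bm{x}'\\ \bm{0}^{T} & 0\end{bmatrix}$, together with the fact that $A$ commutes with $e^{\pm x_{n}A}$, the block multiplication collapses to
\[
\theta=\begin{bmatrix} A\,dx_{n} & e^{-x_{n}A}\,d\bm{x}'\\ \bm{0}^{T} & 0\end{bmatrix}.
\]
Expanding $\theta=\sum_{a}\omega^{a}X_{a}$ in the basis determined by $X_{n}\leftrightarrow\begin{bmatrix}A&\bm{0}\\\bm{0}^{T}&0\end{bmatrix}$ and $X_{i}\leftrightarrow\begin{bmatrix}O&\bm{e}_{i}\\\bm{0}^{T}&0\end{bmatrix}$ for $1\le i\le n-1$, the top-left block (the unique multiple of $A$) gives $\omega^{n}=dx_{n}$, while the top-right column gives the vector identity $(\omega^{1},\dots,\omega^{n-1})^{T}=e^{-x_{n}A}\,d\bm{x}'$.

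Finally I would assemble the metric. Writing $\bm{\omega}=(\omega^{1},\dots,\omega^{n})^{T}$, the previous step yields $\bm{\omega}=\mathrm{diag}(e^{-x_{n}A},1)\,d\bm{x}$, and substituting into the general left-invariant expression \eqref{eds}, namely $ds^{2}=\bm{\omega}^{T}\eta\,\bm{\omega}$, gives
\[
ds^{2}=d\bm{x}^{T}\begin{bmatrix} e^{-x_{n}A} & \bm{0}\\ \bm{0}^{T} & 1\end{bmatrix}^{T}\eta\begin{bmatrix} e^{-x_{n}A} & \bm{0}\\ \bm{0}^{T} & 1\end{bmatrix}d\bm{x}.
\]
Using $(e^{-x_{n}A})^{T}=e^{-A^{T}x_{n}}$ reproduces \eqref{emet}, with $\eta$ one of the matrices in \eqref{eq:eta} by Lemma~\ref{metric}.

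The computation is essentially mechanical, so the only real care needed is with the conventions. The decisive point is the sign in the exponent: the action in the matrix model must be $e^{+x_{n}A}$ so that inverting in $g^{-1}dg$ produces the $e^{-x_{n}A}=e^{-Ax_{n}}$ appearing in \eqref{emet}; getting this backwards would flip the sign of $A$. I would also verify that the basis $\{X_{a}\}$ used to extract the $\omega^{a}$ is literally the one in which $A$ was defined through \eqref{eA}, and note that the commutativity of $A$ with $e^{\pm x_{n}A}$ is exactly what removes all $x_{n}$-dependence from the top-left block, forcing $\omega^{n}=dx_{n}$.
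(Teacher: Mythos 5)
Your proof is correct, and it reaches the same final step as the paper --- substituting the coframe $\omega^{i}=\big(e^{-Ax_{n}}\big)^{i}_{\,j}\,dx_{j}$, $\omega^{n}=dx_{n}$ into $ds^{2}=\sum\eta_{ab}\,\omega^{a}\omega^{b}$ --- but it gets to that coframe by a different route. The paper simply cites \cite[Corollary~2.6]{MR4859975} for the expressions of the left-invariant vector fields and $1$-forms in the canonical coordinates, whereas you derive them from scratch by realizing the $1$-connected group as the semidirect product $\mathbb{R}^{n-1}\rtimes_{\phi}\mathbb{R}$ with $\phi(t)=e^{tA}$ in block-matrix form and computing the Maurer--Cartan form $g^{-1}dg$. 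Your block computations are right: the group law $(\bm{x}',x_{n})\cdot(\bm{y}',y_{n})=(\bm{x}'+e^{x_{n}A}\bm{y}',x_{n}+y_{n})$ does reproduce the bracket \eqref{eA} with the correct sign, and the commutativity of $A$ with $e^{\pm x_{n}A}$ collapses the top-left block to $A\,dx_{n}$, giving $\omega^{n}=dx_{n}$ and $(\omega^{1},\dots,\omega^{n-1})^{T}=e^{-x_{n}A}d\bm{x}'$, exactly the paper's \eqref{eomega}. What your approach buys is a self-contained verification (including the sign of the exponent, which is the easiest thing to get wrong) at the cost of a page of computation the paper outsources. Two cosmetic caveats: the matrix representation need not be faithful at the group level when $e^{tA}$ is periodic, but since the induced Lie algebra map is injective for $A\neq O$ the pulled-back Maurer--Cartan form is still the left-invariant coframe of the abstract group, so nothing is affected; and reading off $\omega^{n}$ as ``the unique multiple of $A$'' presumes $A\neq O$, the case $A=O$ being the abelian one where \eqref{emet} is trivially the constant metric.
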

\begin{proof}
    From \cite[Corollary~2.6]{MR4859975}, the left-invariant vectors $\{ X_a \}_{1 \leq a \leq n}$ and $1$-forms 
$\{ \omega^{a} \}_{1 \leq a \leq n}$ can be written in the coordinate basis 
as
\begin{align}
    X_i &= \sum_{j=1}^{n-1}\left(e^{A x_n}\right)_{i}^j \partial_{x_j},\quad X_n=\partial_{x_n}, \\
    \omega^i &= \sum_{j=1}^{n-1}\left(e^{-A x_n}\right)_{j}^i dx_j,\quad \omega^n=dx_n. \label{eomega}
\end{align}
By substituting \eqref{eomega} into \eqref{eds}, we have the claim.
\end{proof}

\section{Ricci-flat and flat conditions} \label{sric}

We set the three metrics in \eqref{eq:eta} as 
\[ \mathrm{(a)} : \langle \, , \, \rangle_{\mathrm{a}}, \quad 
\mathrm{(b)} : \langle \, , \, \rangle_{\mathrm{b}}, \quad 
\mathrm{(c)} : \langle \, , \, \rangle_{\mathrm{c}}. \]
Moreover, in response to the above metrics, 
we decompose the associated matrix $A$ as 
\begin{equation}
\mathrm{(a)} : S + T, \quad 
\mathrm{(b)} : S_{L} + T_{L}, \quad 
\mathrm{(c)} : 
\begin{bmatrix}
    A' & \bm{b} \, \\
    \bm{c}^{T} & d \, \\  
\end{bmatrix} 
, \label{split}
\end{equation}
where $S, \, T$ are the symmetric and skew-symmetric matrices, 
$S_{L}, \, T_{L}$ satisfy 
\[ S_{L}^{T} = JS_{L}J, \quad T_{L}^{T} = -JT_{L}J 
\quad (J := \mathrm{Diag}(-1,1, \ldots, 1) 
\in \mathrm{M}_{n-1}\mathbb{R}), \] and $A' \in \mathrm{M}_{n-2}\mathbb{R}, \, 
\bm{b}, \bm{c} \in \mathbb{R}^{n-2}, \, d \in \mathbb{R}$. 
Furthermore, we decompose $A'$ into symmetric and skew-symmetric parts 
as $A' = S' + T'$ in case (c). 

As a remark, in case (b), 
the decomposition $A = S_{L} + T_{L}$ is uniquely determined. 
We call it the \textit{$J$-decomposition} of $A$. 

\subsection{Ricci-flat conditions}
We determine, up to isometry, the necessary and sufficient conditions for almost abelian Lie groups to be Ricci-flat. 

\begin{prop} \label{Ric_tensor}
For the Lorentzian almost abelian Lie algebra $(\mathfrak{g}, \langle \, , \, \rangle)$ 
where $\langle \, , \, \rangle$ is one of the three cases in Lemma~\ref{metric},
the Ricci tensor $\mathrm{Ric} 
= \left[ \mathrm{Ric}(X_{a}, X_{b}) \right]
_{1 \leq a, b \leq n}$ is given by 
\begin{align}
 &(\rm{a}) : 
\begin{bmatrix}
    \, \, [S, T] + (\mathrm{tr}\, S)S & \bm{0} \, \\
    \, \, \bm{0}^{T} & -\mathrm{tr}\, S^{2} \,  
\end{bmatrix}
, \label{Ric_a}
\\
&(\rm{b}) : 
\begin{bmatrix}
    \, -J([S_{L}, T_{L}] + (\mathrm{tr}\, S_{L})S_{L}) & \bm{0} \, \\
    \, \bm{0}^{T} & -\mathrm{tr}\,S_{L}^{2} \, \\
\end{bmatrix}
, \label{Ric_b}\\
&(\rm{c}) : 
\frac{1}{2} 
\begin{bmatrix}
    \, -\bm{b}\bm{b}^{T} & \bm{0} 
    & [(\mathrm{tr}\, S')I_{n-2} + S' + T']\bm{b}  \, \\ 
    \, \bm{0}^{T} & 0 & ||\bm{b}||^{2} \, \\
    \, \bm{b}^{T}[(\mathrm{tr}\, S')I_{n-2} + S' - T'] 
    & ||\bm{b}||^{2} & 2d(\mathrm{tr}\, S') - 2\mathrm{tr}\, S'\,\!^{2}
    - 2\bm{b}^{T}\bm{c} \, \\
\end{bmatrix}
, \label{Ric_c}
\end{align}
where $|| \cdot ||$ denotes the Euclidean norm of the Euclidean space. 
\end{prop}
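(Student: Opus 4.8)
The plan is to evaluate the curvature formula of Proposition~\ref{Ric_curv} directly, substituting the data already assembled: the structure constants from \eqref{eA} together with the vanishing of all remaining brackets, the Killing form from Lemma~\ref{Kill}, and the mean curvature vector from Lemma~\ref{mean_curv}. In cases (a) and (b) the basis $\{X_1,\dots,X_n\}$ of Lemma~\ref{metric} is already orthonormal up to the signs $\varepsilon_a=\langle X_a,X_a\rangle$, so I take $e_a=X_a$. In case (c) the induced form on $\mathfrak a$ is degenerate, so I first replace the null pair $X_{n-1},X_n$ by an orthonormal timelike/spacelike pair such as $\tfrac{1}{\sqrt2}(X_{n-1}\pm X_n)$, then apply the formula, and finally re-express the result back in the original basis $\{X_a\}$.

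Two observations keep the four sums manageable. First, every bracket lands in $\mathfrak a$, and the only nonzero brackets are $[X_n,X_j]=\sum_i A^i_jX_i$ and $[X_j,X_n]=-\sum_i A^i_jX_i$; hence in the first sum of Proposition~\ref{Ric_curv} only the term $a=n$ survives, producing an $AA^{*}$-type contribution, where $A^{*}$ is the adjoint of $A$ with respect to the form induced on $\mathfrak a$. Second, the factor $\langle\,\cdot\,,[e_a,e_b]\rangle$ in the second sum sees $A$ through its adjoint, so it is precisely this sum that supplies the transpose; combining the two sums converts $A^{*}A-AA^{*}$ into a commutator. Concretely, in case (a) the first sum contributes $\tfrac12 A^TA$ on the $\mathfrak a$-block and the second contributes $-\tfrac12 AA^T$, so writing $A=S+T$ as in \eqref{split} gives $\tfrac12(A^TA-AA^T)=[S,T]$. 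The fourth (mean-curvature) term contributes $(\tr S)S$, while the third (Killing) term vanishes on $\mathfrak a$ by Lemma~\ref{Kill}; together these give the block $[S,T]+(\tr S)S$ of \eqref{Ric_a}. The $(X_n,X_n)$ entry collects $-\tfrac12\tr(A^TA)$ from the first sum and $-\tfrac12\tr A^2$ from the Killing form, and the identity $\tr(ST)=0$ for $S$ symmetric and $T$ skew (so $\tr(A^TA)=\tr S^2-\tr T^2$ and $\tr A^2=\tr S^2+\tr T^2$) reduces this to $-\tr S^2$; the off-diagonal entries vanish because $X_n\perp\mathfrak a$ and all brackets lie in $\mathfrak a$.

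Case (b) runs in parallel once the Euclidean adjoint $A^T$ is replaced throughout by the Lorentzian adjoint $A^{*}=JA^TJ$ with $J=\mathrm{Diag}(-1,1,\dots,1)$: the $J$-decomposition $A=S_L+T_L$ of \eqref{split} plays the role of $S+T$, and the extra factor $-J$ in \eqref{Ric_b} is exactly the matrix that lowers the $\mathfrak a$-index with respect to this indefinite form, with the sign $\varepsilon_1=-1$ accounting for the difference from case (a). I expect case (c) to be the main obstacle. There the change of basis diagonalizing the metric entangles $X_{n-1}$ and $X_n$, so the block structure $A=\begin{bmatrix}A'&\bm b\\ \bm c^T&d\end{bmatrix}$ of \eqref{split} does not decouple cleanly: the brackets of the orthonormal combinations of $X_{n-1},X_n$ with the remaining generators mix $A'$, $\bm b$, $\bm c$ and $d$, and this mixing is responsible for the off-diagonal vectors $[(\tr S')I_{n-2}+S'\pm T']\bm b$, the entries $\tfrac12\|\bm b\|^2$, and the scalar $d\tr S'-\tr (S')^{2}-\bm b^T\bm c$ in \eqref{Ric_c}. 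The careful bookkeeping of which bracket and inner-product contributions survive along the null directions, and the back-substitution into the original basis, is the step I expect to demand the most attention; the remaining terms follow the same pattern as in case (a).
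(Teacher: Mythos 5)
Your proposal follows exactly the paper's own route: apply Proposition~\ref{Ric_curv} with $e_a = X_a$ in cases (a) and (b) and with the rotated null pair $e_{n-1}, e_n = \tfrac{1}{\sqrt2}(X_{n-1}\pm X_n)$ in case (c), and your bookkeeping for (a) and (b) --- the first sum giving $\tfrac12 A^{T}A$ against the second sum's $-\tfrac12 AA^{T}$ to form $[S,T]$, the trace identities $\mathrm{tr}(A^{T}A)+\mathrm{tr}(A^{2})=2\,\mathrm{tr}\,S^{2}$ for the $(n,n)$ entry, and the $-J$ index-lowering with the $J$-decomposition in case (b) --- matches the paper's Appendix~\ref{Ric_op} term for term. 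The only shortfall is that case (c) is described rather than executed, but the basis change you propose and your identification of which mixed brackets produce $[(\mathrm{tr}\,S')I_{n-2}+S'\pm T']\bm{b}$, $\|\bm{b}\|^{2}$ and the scalar corner entry are precisely what the appendix computation carries out.
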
 

\begin{proof}
We use Proposition~\ref{Ric_curv}. 
In cases (a) and (b), $\{X_{1}, \ldots, X_{n}\}$ is an orthonormal basis, 
so that we can apply the formula directly. 
In case (c), since the generators are not an orthonormal basis, 
we need to exchange the generators such that
\[ e_{i} = X_{i} \ (1 \leq i \leq n-2), \quad 
e_{n-1} = \frac{1}{\sqrt{2}} (X_{n-1} + X_{n}), \quad 
e_{n} = \frac{1}{\sqrt{2}} (X_{n-1} - X_{n}). \]
Then $\{e_{1}, \ldots, e_{n-1}, e_{n} \}$ is an orthonormal basis, 
and we can apply Proposition~\ref{Ric_curv}. 
We obtain the result by direct computation.
The computation is shown explicitly in the Appendix~\ref{Ric_op}.
\end{proof} 


Here, we check the transformation rules for replacing the generators. 
Let $\mathrm{GL}_{m}\mathbb{R}$ denote the set of invertible matrices 
of $\mathrm{M}_{m}\mathbb{R}$. 

\begin{lem} \label{trans} 
When we transform the generators as
\begin{equation*}
[\overline{X}_{1}, \ldots, \overline{X}_{n-1}, \overline{X}_{n}] = 
[X_{1}, \ldots, X_{n-1}, X_{n}] 
\begin{bmatrix}
\, P & \bm{0} \, \, \\
\, \bm{0}^{T} & c \, \, \\
\end{bmatrix}
\quad (P \in \mathrm{GL}_{n-1}\mathbb{R}, \ c \neq 0),
\end{equation*}
the associated matrix $A \in \mathrm{M}_{n-1}\mathbb{R}$ and the metric matrix $\eta$ are transformed to $\overline{A}$ and $\overline{\eta}$ as 
\begin{equation*}
\overline{A} = c P^{-1} A P, \quad \overline{\eta} = 
\begin{bmatrix}
\, P^{T} & \bm{0} \, \, \\
\, \bm{0}^{T} & c \, \, \\
\end{bmatrix}
\eta
\begin{bmatrix}
\, P & \bm{0} \, \, \\
\, \bm{0}^{T} & c \, \, \\
\end{bmatrix}
. 
\end{equation*}
\end{lem}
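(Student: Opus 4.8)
The plan is to verify both transformation formulas by direct computation, reading off the new inner products and structure constants from their definitions. Both are purely linear-algebra bookkeeping, so I would treat the metric and the associated matrix separately. Throughout, I write the change of basis componentwise as $\overline{X}_{j} = \sum_{k=1}^{n-1} P^{k}_{j} X_{k}$ for $1 \leq j \leq n-1$ and $\overline{X}_{n} = c X_{n}$, which is just the block form of the transformation matrix in the statement.

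For the metric matrix, I would use that $\overline{\eta}_{ab} = \langle \overline{X}_{a}, \overline{X}_{b} \rangle$ together with the bilinearity of $\langle \, , \, \rangle$. Expanding each case gives $\overline{\eta}_{ij} = \sum_{k,l} P^{k}_{i} \eta_{kl} P^{l}_{j}$ on the upper-left $(n-1)\times(n-1)$ block, $\overline{\eta}_{in} = c \sum_{k} P^{k}_{i} \eta_{kn}$ on the last column (and, by symmetry of $\eta$, its transpose on the last row), and $\overline{\eta}_{nn} = c^{2} \eta_{nn}$. These three pieces are precisely the entries of the block product displayed in the statement, which establishes the formula for $\overline{\eta}$.

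For the associated matrix I would work in the row-vector notation of the statement. Restricting the change of basis to the abelian ideal gives $[\overline{X}_{1}, \ldots, \overline{X}_{n-1}] = [X_{1}, \ldots, X_{n-1}] P$, and since $\overline{X}_{n} = c X_{n}$ we have $\mathrm{ad}\, \overline{X}_{n} = c\, \mathrm{ad}\, X_{n}$. By \eqref{eA}, applying $\mathrm{ad}\, X_{n}$ to the row $[X_{1}, \ldots, X_{n-1}]$ multiplies it on the right by $A$, so that
\[ [\overline{X}_{n}, [\overline{X}_{1}, \ldots, \overline{X}_{n-1}]] = c\, [X_{1}, \ldots, X_{n-1}]\, A\, P. \]
On the other hand, by the definition of $\overline{A}$ this same expression equals $[\overline{X}_{1}, \ldots, \overline{X}_{n-1}]\, \overline{A} = [X_{1}, \ldots, X_{n-1}]\, P\, \overline{A}$. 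Since $\{X_{1}, \ldots, X_{n-1}\}$ is a basis, comparing coefficients yields $c A P = P \overline{A}$, i.e.\ $\overline{A} = c P^{-1} A P$. It is also worth noting that $c \neq 0$ and the invertibility of $P$ guarantee $\overline{X}_{n} \notin \mathfrak{a}$ and that the span of $\overline{X}_{1}, \ldots, \overline{X}_{n-1}$ is again $\mathfrak{a}$, so the transformed basis is still adapted in the sense of Lemma~\ref{metric}.

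The main and essentially only subtlety is orientation: one must keep track that the conjugation is $P^{-1} A P$ rather than $P A P^{-1}$ — this comes from re-expressing the transformed brackets in the new basis via $X_{i} = \sum_{l} (P^{-1})^{l}_{i} \overline{X}_{l}$, equivalently from the identity $c A P = P \overline{A}$ above — and that the scalar $c$ enters $\overline{A}$ linearly but appears quadratically in the $\overline{\eta}_{nn}$ entry. Beyond this indexing care there is no genuine obstacle; everything follows from the bilinearity of the metric and the defining relation \eqref{eA}.
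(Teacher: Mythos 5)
Your proposal is correct and follows essentially the same route as the paper: the paper likewise takes the metric formula as the standard pullback $\overline{\eta}=Q^{T}\eta Q$ and obtains $\overline{A}=cP^{-1}AP$ by a direct computation using $\overline{X}_{n}=cX_{n}$ and the invariance of $\mathfrak{a}$ under the change of basis. You have simply written out the "direct computations" that the paper leaves implicit, and your bookkeeping (in particular the orientation $P^{-1}AP$ from $cAP=P\overline{A}$) is accurate.
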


\begin{proof}
The latter transformation formula is well-known. 
By using 
\[ \mathrm{span}_{\mathbb{R}}\{ X_{1}, \ldots, X_{n-1} \} = \mathrm{span}_{\mathbb{R}}\{ \overline{X}_{1}, \ldots, \overline{X}_{n-1} \} 
= \mathfrak{a}, \quad \overline{X}_{n} = c X_{n}, \]
we have the former transformation formula by direct computations. 
\end{proof}

\begin{cor}
    The metric \eqref{emet} is Ricci-flat if and only if the associated matrix $A$ under the decomposition \eqref{split} satisfies
    \begin{align}
        &(\mathrm{a}) : S=O.\\
        &(\mathrm{b}) : \mathrm{tr}\, S_{L} = \mathrm{tr}\, S_{L}^{2} = 0,\ T_L=O.
        \label{eq:trsl}\\
        &(\mathrm{c}) : S' = O \ (\mathrm{if} \ d = 0), \ 
\mathrm{tr}\, S'\,\!^{2} = \mathrm{tr}\, S' \ (\mathrm{if} \ d=1),\ \bm{b} = \bm{0}.  \label{eq:ric0c}
    \end{align}
\end{cor}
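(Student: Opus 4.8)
The plan is to read off the Ricci-flatness condition directly from Proposition~\ref{Ric_tensor}: the metric \eqref{emet} is Ricci-flat exactly when every entry of the matrix $[\mathrm{Ric}(X_a,X_b)]$ displayed in \eqref{Ric_a}--\eqref{Ric_c} vanishes. So in each of the three cases I would set the given block form equal to the zero matrix and translate the resulting matrix, vector, and scalar equations into the algebraic conditions in the statement. Two elementary devices should carry most of the argument: first, for a genuinely symmetric real matrix $M$ one has $\mathrm{tr}\,M^{2}=\sum_{i,j}M_{ij}^{2}\ge 0$, with equality iff $M=O$; second, the trace of any commutator vanishes, $\mathrm{tr}[S,T]=0$, which lets me extract scalar constraints from the matrix equations.

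Case (a) is the cleanest, since $\{X_1,\dots,X_n\}$ is orthonormal and $S$ is an honest real symmetric matrix. Vanishing of the $(n,n)$-entry of \eqref{Ric_a} gives $\mathrm{tr}\,S^{2}=0$, and positive-definiteness of the Frobenius norm already forces $S=O$; conversely $S=O$ kills both the block $[S,T]+(\mathrm{tr}\,S)S$ and the scalar $\mathrm{tr}\,S^{2}$, giving equivalence. For case (c) I would first read off from the $(1,1)$-block and the $(n-1,n-1)$-entry of \eqref{Ric_c} the two conditions $\bm b\bm b^{T}=O$ and $\|\bm b\|^{2}=0$; either gives $\bm b=\bm 0$, since $\|\bm b\|^{2}=\mathrm{tr}(\bm b\bm b^{T})$. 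Once $\bm b=\bm 0$, the off-diagonal vectors $[(\mathrm{tr}\,S')I_{n-2}+S'\pm T']\bm b$ vanish automatically, and only the $(n,n)$-entry survives, reducing to $\mathrm{tr}\,S'\,\!^{2}=d\,\mathrm{tr}\,S'$. Here I would invoke the scaling freedom in Lemma~\ref{trans} to normalize the scalar $d$ to $0$ or $1$: for $d=0$ the equation becomes $\mathrm{tr}\,S'\,\!^{2}=0$, and since $S'$ is the genuine symmetric part of the Euclidean block $A'$, positive-definiteness again yields $S'=O$; for $d=1$ it is exactly the stated $\mathrm{tr}\,S'\,\!^{2}=\mathrm{tr}\,S'$.

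Case (b) is where I expect the real difficulty. Since $J$ is invertible, vanishing of the upper-left block of \eqref{Ric_b} is equivalent to $[S_L,T_L]+(\mathrm{tr}\,S_L)S_L=O$, and the $(n,n)$-entry gives $\mathrm{tr}\,S_L^{2}=0$. Taking the trace of the matrix equation and using $\mathrm{tr}[S_L,T_L]=0$ collapses it to $(\mathrm{tr}\,S_L)^{2}=0$, hence $\mathrm{tr}\,S_L=0$, after which the equation reduces to $[S_L,T_L]=O$. The hard part will be the final step, deducing $T_L=O$: unlike in cases (a) and (c), the matrix $S_L$ is only $J$-self-adjoint, so $\mathrm{tr}\,S_L^{2}$ is an \emph{indefinite} quadratic form and $\mathrm{tr}\,S_L^{2}=0$ does not by itself force $S_L=O$, while $[S_L,T_L]=O$ does not obviously force $T_L=O$. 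To close this gap I would put the $J$-self-adjoint operator $S_L$ into a normal form adapted to the Lorentzian form $J$, describe the centralizer $\{\,T:[S_L,T]=O\,\}$ inside the $J$-skew matrices, and try to combine the trace constraints $\mathrm{tr}\,S_L=\mathrm{tr}\,S_L^{2}=0$ with the commuting relation to exclude a nonzero $J$-skew $T_L$; this is the step I would scrutinize most carefully, as one must check that no spurious (e.g.\ flat) configurations with $T_L\neq O$ survive. As a sanity check I would match the outcome against the explicit solution \eqref{main}, where $A$ is the $J$-self-adjoint matrix $R\oplus\mathrm{Diag}(\lambda_3,\dots,\lambda_{n-1})$ with $R$ the $2\times 2$ block of eigenvalues $\alpha\pm i\beta$, so that $T_L=O$ and the conditions $\mathrm{tr}\,S_L=\mathrm{tr}\,S_L^{2}=0$ reproduce precisely \eqref{Ric_flat_cond}.
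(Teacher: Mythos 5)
Your treatment of cases (a) and (c) matches the paper's proof: read off the vanishing of the blocks in Proposition~\ref{Ric_tensor}, use $\mathrm{tr}\,M^{2}=\|M\|^{2}$ for genuinely symmetric $M$, get $\bm b=\bm 0$ from $\|\bm b\|^{2}=0$, and normalize $d\in\{0,1\}$ via Lemma~\ref{trans}. The problem is case (b), and it sits exactly at the step you flagged. The conclusion $T_{L}=O$ is \emph{not} a consequence of the Ricci-flatness equations, so the strategy you propose --- put $S_{L}$ into normal form, compute the centralizer among $J$-skew matrices, and ``exclude a nonzero $J$-skew $T_{L}$'' --- is attempting to prove a false implication. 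A counterexample: take $S_{L}=O$ and any nonzero $J$-skew $T_{L}$; then $\mathrm{tr}\,S_{L}=\mathrm{tr}\,S_{L}^{2}=0$ and $[S_{L},T_{L}]=O$, so the metric is Ricci-flat (indeed flat, by Proposition~\ref{flat}(b)), yet $T_{L}\neq O$. No amount of centralizer analysis will rule this out.

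The missing idea is that $T_{L}=O$ is a normalization up to isometry, not a derived necessary condition (the corollary is stated, as the subsection announces, ``up to isometry''). The paper derives $\mathrm{tr}\,S_{L}=0$, $[S_{L},T_{L}]=O$ and $\mathrm{tr}\,S_{L}^{2}=0$ exactly as you do, and then observes that once $[S_{L},T_{L}]=O$ holds the coordinate expression \eqref{emet} no longer sees $T_{L}$: since $A^{T}=J(S_{L}-T_{L})J$, one has
\begin{equation*}
e^{-A^{T}x_{n}}\,J\,e^{-Ax_{n}}
= J\,e^{-(S_{L}-T_{L})x_{n}}\,e^{-(S_{L}+T_{L})x_{n}}
= J\,e^{-2S_{L}x_{n}},
\end{equation*}
so the metric depends only on $S_{L}$ and one may set $T_{L}=O$ without changing the isometry class. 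Without this observation (or an equivalent one), the ``only if'' direction of \eqref{eq:trsl} cannot be completed, so your proof as planned would fail at this point.
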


\begin{proof}
In case (a),  
from Proposition~\ref{Ric_tensor}, the Ricci-flat conditions are: 
\[ [S, T] + (\mathrm{tr}\, S)S = O, \quad 
\mathrm{tr}\, S^{2} = 0, \]
which is equivalent to $S = O$.
This is because $S$ is symmetric and $\mathrm{tr}\, S^{2} = ||S||^{2} = 0$, 
where $||S||$ denotes the matrix norm of $S$.  

In case (b), 
from Proposition~\ref{Ric_tensor}, the Ricci-flat conditions are: 
\[ [S_{L}, T_{L}] + (\mathrm{tr}\, S_{L})S_{L} = O, \quad 
\mathrm{tr}\, S_{L}^{2} = 0. \]
By taking the trace of the first equation, we have $\mathrm{tr}\,S_{L} = 0$.
The first condition becomes 
$[S_{L}, T_{L}]=O$.
In this case, the metric \eqref{emet} becomes
\begin{align*}
ds^{2} 
    = 
    d\bm{x}^T
    \begin{bmatrix}
        J e^{-2S_{L}x_n} & \bm{0}\\
        \bm{0}^T & 1
    \end{bmatrix}
    d\bm{x}.
\label{eq:ds2b}
\end{align*}
It is independent from $T_L$.
As long as we consider the isometric class of the Ricci-flat metrics, we may set $T_{L}=O$ without loss of generality.
Thus, we have the conditions \eqref{eq:trsl}.

In case (c), 
when we set 
\begin{equation*}
P = 
\begin{bmatrix}
\, 1 & & & \, \\
\, & \ddots & & \, \\
\, & & 1 & \, \\ 
\, & & & d \, \\
\end{bmatrix}
, \quad c = d^{-1} 
\quad (d \neq 0),
\end{equation*}
from Lemma~\ref{trans}, the transformation preserves the metric $\langle \, , \, \rangle_{\mathrm{c}}$ 
and the associated matrix $A$ is multiplied by $d^{-1}$. 
Actually, when $d \neq 0$ and $A$ is decomposed as \eqref{split}, by the transformation formula in Lemma~\ref{trans}, we have 
\[ d^{-1} 
\begin{bmatrix}
\, I_{n-2} & \bm{0} \, \\
\, \bm{0}^{T} & d^{-1} \, \\
\end{bmatrix}
\begin{bmatrix}
\, A' & \bm{b} \, \\
\, \bm{c}^{T} & d \, \\  
\end{bmatrix} 
\begin{bmatrix}
\, I_{n-2} & \bm{0} \, \\
\, \bm{0}^{T} & d \, \\
\end{bmatrix}
= 
\begin{bmatrix}
\, d^{-1} A' & \bm{b} \, \\
\, d^{-2}\bm{c}^{T} & 1 \, \\
\end{bmatrix}
.
\]
Thus, we can fix $d=0$ or $d=1$ without loss of generality. 
From Proposition~\ref{Ric_tensor}, the Ricci-flat conditions are: 
\[ ||\bm{b}|| = 0, \quad 
[(\mathrm{tr}\, S')I_{n-2} + S' + T']\bm{b} = \bm{0}, \quad 
2d(\mathrm{tr}\, S') - 2\mathrm{tr}\, S'\,\!^{2} - 2\bm{b}^{T}\bm{c} = 0. \]
The first condition immediately yields $\bm{b} = \bm{0}$, which makes the second condition trivial.
The third condition is equivalent to 
$\mathrm{tr}\, S'\,\!^{2} = d\mathrm{tr}\, S'$. 
It implies $S'=O$ if $d=0$.
Thus, we obtain the Ricci-flat conditions \eqref{eq:ric0c}.    
\end{proof}

We define the set of Lorentz transformations as 
\[ \mathrm{O}(1, m-1) 
= \{ P \in \mathrm{GL}_{m}\mathbb{R} \mid P^{T} J P = J \} \quad 
(J = \mathrm{Diag}(-1, 1, \ldots, 1) \in \mathrm{M}_{m}\mathbb{R}). \]
In case (b), we set $\overline{A} = P^{-1}AP$ for $P \in \mathrm{O}(1, n-2)$ from Lemma~\ref{trans}. 
Let $A = S_{L} + T_{L}$ and 
$\overline{A} = \overline{S}_{L} + \overline{T}_{L}$ be the $J$-decompositions. 
Then we have $\overline{S}_{L} = P^{-1}S_{L}P, \, \overline{T}_{L} = P^{-1}T_{L}P$ 
from the uniqueness of the $J$-decompositions. 
If $A$ satisfies the Ricci-flat conditions, $\overline{A}$ does trivially. 
Thus, we may replace $A$ with the normal form based on the action of Lorentz transformations. The following is known. 

\begin{prop}[{\cite[Corollary~2.4]{MR1332936}}] \label{normal_form}
Let $S_L \in \mathrm{M}_{m}\mathbb{R}$ satisfy $S_L^{T} = JS_LJ$. 
Then $S_L$ is conjugate by $P \in \mathrm{O}(1, m-1)$ to a diagonal matrix, 
or a direct sum of a diagonal matrix and one of the following matrices: 
\begin{align*}
({\rm i}) : 
\begin{bmatrix} 
\, \, \alpha & -\beta \, \\
\, \, \beta & \alpha \, \\
\end{bmatrix}
, \quad 
({\rm ii}) : 
\begin{bmatrix}
\, -1 + \gamma & -1 \, \\
\, 1 & 1 + \gamma \,
\\
\end{bmatrix}
, \quad 
({\rm iii}) : 
\begin{bmatrix}
\, -\delta & -\delta & -1 \, \, \\
\, \delta & \delta & 1 \, \, \\
\, 1 & 1 & 0 \, \\
\end{bmatrix}
\end{align*}
up to the sign. 
Here $\alpha, \, \beta, \gamma, \delta \in \mathbb{R}$ and $\beta > 0$.
\end{prop}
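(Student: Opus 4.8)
The plan is to recognize the hypothesis $S_L^{T} = JS_LJ$ as self-adjointness of $S_L$ with respect to the indefinite bilinear form $\langle x, y \rangle_{J} := x^{T}Jy$ of signature $(1, m-1)$. Indeed, left-multiplying the relation by $J$ and using $J^{2} = I$ gives $S_L^{T}J = JS_L$, so $JS_L$ is symmetric and $\langle S_Lx, y \rangle_{J} = \langle x, S_Ly \rangle_{J}$. Moreover, conjugation $S_L \mapsto P^{-1}S_LP$ by $P \in \mathrm{O}(1, m-1)$ is exactly an isometric change of $\langle \, , \, \rangle_{J}$-orthonormal basis, and it preserves the self-adjointness condition. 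Thus the statement is equivalent to the classification, up to isometry, of $J$-self-adjoint operators on a Lorentzian vector space.

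First I would decompose $\mathbb{R}^{m}$ into a $\langle \, , \, \rangle_{J}$-orthogonal direct sum of $S_L$-invariant, non-degenerate subspaces. Starting from the generalized-eigenspace (primary) decomposition of $S_L$ and using self-adjointness, one shows that generalized eigenspaces attached to distinct real eigenvalues are mutually $J$-orthogonal, and that a non-real eigenvalue $\alpha + i\beta$ necessarily occurs together with its conjugate $\alpha - i\beta$, the real form of the corresponding pair of generalized eigenspaces being a real, $S_L$-invariant, $J$-non-degenerate subspace $J$-orthogonal to the remainder. On each summand the induced form is then either definite or indefinite of a definite signature.

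The decisive structural input is that $\langle \, , \, \rangle_{J}$ has signature $(1, m-1)$, i.e. a single negative direction. In any $J$-orthogonal decomposition into non-degenerate pieces, at most one summand can be indefinite, and it must carry signature $(1, k)$ for some $k \ge 1$; every other summand is positive definite. On a positive-definite summand, $S_L$ is self-adjoint in the ordinary Euclidean sense and hence diagonalizes in a real orthonormal eigenbasis, producing the diagonal part of the normal form. Consequently every failure of real diagonalizability, that is every complex eigenvalue pair or non-trivial Jordan block, is confined to the unique indefinite summand. I would then enumerate the admissible indecomposable $J$-self-adjoint blocks compatible with a Lorentzian signature: a complex-eigenvalue block and a size-$2$ Jordan block with null eigenvector each require signature $(1,1)$, a size-$3$ Jordan block with null eigenvector requires $(1,2)$, while a longer Jordan chain or two independent non-diagonalizable blocks would consume more than one negative direction and are therefore excluded. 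Choosing a $\langle \, , \, \rangle_{J}$-adapted (Witt) basis on each admissible block and writing out the matrix of $S_L$ yields the explicit forms $(\mathrm{i})$, $(\mathrm{ii})$, $(\mathrm{iii})$, the remaining freedom in orienting the null and timelike directions collapsing to one sign per block, which is the \emph{sign characteristic} of the Jordan chain and explains the clause ``up to the sign''.

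The main obstacle is the indefinite-orthogonality bookkeeping of the previous step: establishing that the invariant summands are genuinely $J$-non-degenerate, and, above all, verifying that the three listed blocks are the \emph{only} indecomposable indefinite $J$-self-adjoint blocks in signatures $(1,1)$ and $(1,2)$. This is precisely where the Lorentzian hypothesis is essential and where a clean argument benefits from the canonical-pair theory for self-adjoint operators in indefinite inner product spaces; I would invoke that machinery to package the sign-characteristic computation rather than reconstruct it by hand.
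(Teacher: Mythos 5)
The paper does not prove this proposition at all: it is imported verbatim as \cite[Corollary~2.4]{MR1332936}, i.e.\ as the known canonical form for $J$-self-adjoint (``$H$-self-adjoint'') matrices on a Lorentzian inner product space. Your sketch reconstructs exactly the standard argument behind that citation, and it is essentially correct: the identification $S_L^{T}J=JS_L$ as self-adjointness for $\langle x,y\rangle_J=x^TJy$, the invariance of this condition under conjugation by $\mathrm{O}(1,m-1)$, the $J$-orthogonal primary decomposition into non-degenerate invariant summands, and the decisive signature count --- a real Jordan block of size $k$ occupies a summand of signature $(\lfloor k/2\rfloor,\lceil k/2\rceil)$ (up to swapping), and a complex-conjugate pair with Jordan multiplicity $k$ occupies one of signature $(k,k)$, so with only one timelike direction available the non-diagonalizable part is confined to a single block of type (i), (ii) or (iii) --- are all the right ingredients, and your reading of ``up to the sign'' as the sign characteristic of the real-eigenvalue Jordan chains is consistent with the Remark following the proposition. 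Two things you leave unchecked and would need to supply to make this self-contained: (1) the non-degeneracy and mutual $J$-orthogonality of the primary components (standard, via $\langle(S_L-\lambda)^kx,y\rangle_J=\langle x,(S_L-\lambda)^ky\rangle_J$ and invertibility of $S_L-\lambda$ on the generalized eigenspace of $\mu\neq\lambda$); and (2) the explicit verification that a Witt-adapted basis on the $(1,1)$- and $(1,2)$-blocks produces precisely the matrices (i)--(iii) listed (e.g.\ that (ii) is the $J$-self-adjoint realization of a size-$2$ Jordan block with eigenvalue $\gamma$, and (iii) of a size-$3$ nilpotent block). Since you explicitly delegate this to the canonical-pair machinery of indefinite linear algebra --- which is the content of the cited corollary --- your proposal is an acceptable, if not independent, justification; it proves the statement by the same route the reference does rather than by a new one.
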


\begin{rem}  
The overall sign of the matrices in Proposition~\ref{normal_form} can be chosen to be positive by the transformation  
\[ P = I_{n-1}, \quad c = -1 \]
in Lemma~\ref{trans}.
In addition, we can choose $\alpha\le 0$ by the transformation 
\[ P = \mathrm{Diag}(-1, 1, \ldots, 1), \quad c = -1. \]
\end{rem}

\subsection{Flat conditions} \label{flat_cond}
Since any flat metric is trivially Ricci-flat, we are interested in metrics that are Ricci-flat but non-flat. Therefore, we establish the conditions for flatness in this subsection.

\begin{prop} \label{flat}
The metric \eqref{emet} is flat if and only if 
\begin{enumerate}
  \item[\rm{(a)} :] Ricci-flat.
  \item[\rm{(b)} :] Ricci-flat, $S_L^2=O \ \mathrm{and} \ \mathrm{rank}\, S_{L} \leq 1$. 
  \item[\rm{(c)} :] Ricci-flat (when $d=0$), Ricci-flat, $S'=S'^2\ \mathrm{and} \ [S',T']=O$ (when $d=1$).
\end{enumerate}
\end{prop}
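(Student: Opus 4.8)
The plan is to compute the full curvature operator $R(X,Y)$ for each of the three metric cases and extract the flatness conditions, then simplify using the already-established Ricci-flat conditions. First I would recall that flatness means $R(X,Y)=0$ for all pairs of basis vectors, which by the formula $R(X,Y)=[L_X,L_Y]-L_{[X,Y]}$ reduces to computing the operators $L_{X_a}$ explicitly from \eqref{left_op}. Since the only nonvanishing brackets are $[X_n,X_j]=\sum_i A^i_j X_i$, the operators $L_{X_a}$ are linear in the entries of $A$ and can be written blockwise with respect to the splitting $\mathfrak{a}\oplus\mathbb{R}X_n$. In each case the decomposition of $A$ from \eqref{split} (into $S+T$, or $S_L+T_L$, or the block form with $A'=S'+T'$) will organize the computation, because $L_X$ separates naturally into a symmetric-type piece and a skew piece governed by the signature $\eta$.

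Next I would handle case (a) first as the baseline: here I expect that the Ricci-flat condition $S=O$ already forces $A=T$ to be skew-symmetric, and a skew-symmetric associated matrix on a Riemannian ideal with a timelike $X_n$ should give a flat metric directly (this is essentially the observation that the metric becomes a twisted product with orthogonal monodromy $e^{-Ax_n}\in\mathrm{O}(n-1)$, which is flat). So the content of (a) is that Ricci-flat already implies flat. For case (b) I would compute $R$ and expect the vanishing to impose, beyond Ricci-flatness, that $S_L^2=O$ together with $\mathrm{rank}\,S_L\le 1$; the natural route is to evaluate the sectional-type curvature components $\langle R(X_i,X_n)X_n,X_i\rangle$ and the mixed components, showing these are quadratic expressions in $S_L$ that vanish exactly when $S_L$ is nilpotent of the stated rank. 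The uniqueness of the $J$-decomposition and the already-derived facts $\mathrm{tr}\,S_L=\mathrm{tr}\,S_L^2=0$, $T_L=O$ will let me treat $A=S_L$ as self-adjoint for $J$ and reduce to a concrete matrix condition.

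For case (c), the degenerate-signature case, I would split according to $d=0$ and $d=1$. When $d=0$, Ricci-flatness already gave $S'=O$ and $\bm b=\bm 0$, and I expect the remaining bracket structure to be flat automatically, so the claim is again that Ricci-flat implies flat. When $d=1$, after Ricci-flatness has forced $\bm b=\bm 0$ and $\mathrm{tr}\,S'^2=\mathrm{tr}\,S'$, I would compute $R$ on the $X_i$ pairs within $\mathfrak{a}$ and the $X_i$–$X_n$ pairs, expecting the flatness conditions to crystallize as $S'=S'^2$ (idempotency of the symmetric part) and $[S',T']=O$. The main obstacle will be case (c) with $d=1$: because $\{X_a\}$ is not orthonormal there, I must pass to the orthonormal basis $e_{n-1}=\tfrac{1}{\sqrt2}(X_{n-1}+X_n)$, $e_n=\tfrac{1}{\sqrt2}(X_{n-1}-X_n)$ as in Proposition~\ref{Ric_tensor}, and carefully track the null directions through the curvature operator; keeping the block bookkeeping correct and confirming that no curvature components beyond the stated ones give independent constraints is the delicate part. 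I would organize the bulk of these verifications in the Appendix, mirroring the treatment of Proposition~\ref{Ric_tensor}, and present here only the reductions that turn the raw curvature vanishing into the clean conditions listed in the statement.
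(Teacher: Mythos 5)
Your plan coincides with the paper's proof: in each case one writes down the operators $L_{X_a}$ from \eqref{left_op} under the Ricci-flat conditions, computes $R(X,Y)=[L_X,L_Y]-L_{[X,Y]}$ blockwise, and reads off that the extra conditions are exactly $S_L^2=O$, $\mathrm{rank}\,S_L\le 1$ in case (b) and $S'^2=dS'$, $[S',T']=O$ in case (c). The only inessential deviation is that in case (c) the paper evaluates $L_{X_a}$ directly in the quasi-orthonormal basis (the Koszul-type formula \eqref{left_op} does not require orthonormality), so your detour through $e_{n-1},e_n$ is unnecessary though harmless.
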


\begin{proof}
In case (a), under the Ricci-flatness, 
we have the operators 
\[ L_{X_{i}} = 0 \ (1 \leq i \leq n-1), \quad 
L_{X_{n}} = 
\begin{bmatrix}
\, T & \bm{0} \, \\
\, \bm{0}^{T} & 0 \, \\
\end{bmatrix}
. \]
A direct calculation from these equations yields $R = 0$, which shows that Ricci-flatness implies flatness.

In case (b), under the Ricci-flatness, 
we have the operators 
\begin{align}
L_{X_{i}} = 
\begin{bmatrix}
\, O & -\bm{s}_{i} \, \\
\, \bm{s}_{i}^{T}J & 0 \, \\
\end{bmatrix}
\ (1 \leq i \leq n-1), \quad 
L_{X_{n}} = 0, \label{eq:lxi_b}
\end{align}
where we set 
$S_{L} = [\bm{s}_{1}, \ldots, \bm{s}_{n-1}] 
\in \mathrm{M}_{n-1}\mathbb{R}$. 
From them, we have 
\[ R(X_{i}, X_{j}) = 
\begin{bmatrix}
\, -(\bm{s}_{i}\bm{s}_{j}^{T} - \bm{s}_{j}\bm{s}_{i}^{T})J & \bm{0} \, \\
\, \bm{0}^{T} & 0 \, \\
\end{bmatrix}
\ (1 \leq i, j \leq n-1), \quad 
R(X_{n}, X_{i}) = 
\begin{bmatrix}
\, O & -S_{L}\bm{s}_{i} \, \\
\, \bm{s}_{i}^{T}S_{L}^{T}J & 0 \, \\
\end{bmatrix}
. \]
Therefore, the metric is flat if and only if 
\[ \bm{s}_{i}\bm{s}_{j}^{T} = \bm{s}_{j}\bm{s}_{i}^{T} \ (1 \leq i, j \leq n-1), \quad S_{L}^{2} = O. \]
The first condition is equivalent to $\mathrm{rank}\, S_{L} \leq 1$. 
Thus, we 
derive the desired flat conditions. 

In case (c), under the Ricci-flatness, we have the operators 
\begin{align}
L_{X_{i}} = 
\begin{bmatrix}
\, O & \bm{0} & -\bm{s}_{i}' \, \\
\, \bm{s}_{i}'^{T} & 0 & 0 \, \\
\, \bm{0}^{T} & 0 & 0 \, \\ 
\end{bmatrix}
\ (1 \leq i \leq n-2), \quad L_{X_{n-1}} = 0, \quad 
L_{X_{n}} = 
\begin{bmatrix}
\, T' & \bm{0} & -\bm{c} \, \\
\, \bm{c}^{T} & d & 0 \, \\
\, \bm{0}^{T} & 0 & -d \, \\
\end{bmatrix}
, \label{eq:lxi_c}
\end{align}
where we set 
$S' = [\bm{s}_{1}', \ldots, \bm{s}_{n-2}'] \in \mathrm{M}_{n-2}\mathbb{R}$ 
and $d = 0, 1$. 
From them, we find that
\begin{align}
R(X_{i}, X_{j}) = R(X_{n-1}, X_{i}) = 0 \ (1 \leq i, j \leq n-2), \quad 
R(X_{n}, X_{n-1}) = 0. \label{eq:rxi} 
\end{align}
Also, $R(X_{n}, X_{i}) = 0$ for any $i = 1, \ldots, n-2$ 
if and only if $S'\,\!^{2} - dS' + [S', T'] = O$.
By diagonalizing $S'$, it is equivalent to 
$S'\,\!^{2} = dS', \, [S', T'] = O$.
When $d = 0$, 
we have $S'\,\!^{2} = O$, that is, $S' = O$. 
When $d = 1$, 
we have the flat conditions $S' = S'\,\!^{2}, \, [S', T'] = O$. 
\end{proof}

Combining the flat conditions and Ricci-flat conditions, 
we have the following proposition. 

\begin{prop} \label{Ric_flat2} 
The metric \eqref{emet} can be Ricci-flat and non-flat only in the following three cases:
\begin{align}
    \langle \, , \, \rangle &=\langle \, , \, \rangle_{\mathrm{b}},\quad A=\begin{bmatrix}
\, \, \alpha & -\beta \, \\
\, \, \beta & \alpha \, \\
\end{bmatrix}
\oplus \, 
\mathrm{Diag}(\lambda_{3}, \ldots, \lambda_{n-1}),
\label{eq:bi}\\
\langle \, , \, \rangle&=\langle \, , \, \rangle_{\mathrm{b}},\quad 
A=\begin{bmatrix}
\, -\delta & -\delta & -1 \, \\
\, \delta & \delta & 1 \, \\
\, 1 & 1 & 0 \, \\
\end{bmatrix}
\oplus O_{n-4},
\label{eq:biii}\\
\langle \, , \, \rangle&=\langle \, , \, \rangle_{\mathrm{c}},\quad 
A = 
\begin{bmatrix}
\, A' & \bm{0} \, \\
\, \bm{c}^{T} & 1 \, \\
\end{bmatrix},
\label{eq:c1}
\end{align}
where $\alpha\le 0,\ \beta>0,\ \lambda_3,\dots,\lambda_{n-1},\delta\in\mathbb{R},\ A' \in \mathrm{M}_{n-2}\mathbb{R}, \, \bm{c} \in \mathbb{R}^{n-2}$.
In the other cases, the Ricci-flat metrics must be flat.
\end{prop}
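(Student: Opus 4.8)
The plan is to intersect, signature class by signature class, the Ricci-flat conditions from the corollary following Lemma~\ref{trans} with the flatness conditions of Proposition~\ref{flat}; what remains is exactly the Ricci-flat but non-flat locus, while the complement is flat, which yields the final sentence of the statement. Case (a) is disposed of immediately: Proposition~\ref{flat} equates flatness with Ricci-flatness in signature (a), so no non-flat Ricci-flat metric occurs there and (a) contributes nothing.

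For case (b) I would first invoke the Ricci-flat conditions \eqref{eq:trsl} to set $T_L = O$, reducing to $A = S_L$ with $S_L^T = J S_L J$ and $\mathrm{tr}\, S_L = \mathrm{tr}\, S_L^2 = 0$. I then apply Proposition~\ref{normal_form}: up to conjugation by $\mathrm{O}(1, n-2)$---which, by the discussion preceding that proposition, preserves both the $J$-decomposition and the Ricci-flat conditions---$S_L$ is a diagonal matrix, possibly in direct sum with one of the blocks (i), (ii), (iii). The decisive computation is the pair $(\mathrm{tr},\, \mathrm{tr}\,(\cdot)^2)$ for each block: block (i) gives $(2\alpha,\, 2(\alpha^2 - \beta^2))$, block (ii) gives $(2\gamma,\, 2\gamma^2)$, block (iii) gives $(0, 0)$, while a diagonal part $\mathrm{Diag}(\lambda_3, \ldots, \lambda_{n-1})$ contributes $(\sum \lambda_i,\, \sum \lambda_i^2)$. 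Since $\sum \lambda_i^2 \ge 0$, imposing $\mathrm{tr}\, S_L^2 = 0$ forces all $\lambda_i = 0$ except when a block supplies a negative square-trace, which happens only for block (i). Concretely, the purely diagonal case and block (ii) (where $2\gamma^2 \ge 0$ forces $\gamma = 0$) collapse to $S_L = O$ or a rank-one nilpotent, both flat by Proposition~\ref{flat}(b); block (iii) forces $\lambda_i = 0$ but keeps $S_L^2 \ne O$, giving the non-flat form \eqref{eq:biii}; and block (i), whose square-trace can be negative, permits nontrivial $\lambda_i$ satisfying \eqref{Ric_flat_cond}, and since $\beta > 0$ forces $S_L^2 \ne O$ it is non-flat, yielding \eqref{eq:bi}.

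For case (c) the conditions \eqref{eq:ric0c} give $\bm{b} = \bm{0}$ and let me normalize $d \in \{0, 1\}$. When $d = 0$, Proposition~\ref{flat} again makes flatness equivalent to Ricci-flatness, so this subcase is discarded; when $d = 1$ the associated matrix has the shape \eqref{eq:c1}, with flatness requiring the additional conditions $S' = S'^2$ and $[S', T'] = O$, so the family \eqref{eq:c1} is non-flat in general. Assembling the survivors of (b) and (c) produces the three advertised cases.

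The main obstacle I anticipate is the bookkeeping in case (b): I must confirm that the normal form in Proposition~\ref{normal_form} carries at most one non-diagonal block (a feature special to Lorentzian signature), so that the finite case check is exhaustive, and that the sign of each block's square-trace is matched correctly against $\mathrm{tr}\, S_L^2 = 0$. This sign analysis is precisely what isolates the genuinely non-flat families coming from (i) and (iii) and rules out (ii) together with the purely diagonal case, and it is the conceptual heart of the argument.
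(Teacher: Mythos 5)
Your proposal is correct and follows essentially the same route as the paper: dispose of case (a) and case (c) with $d=0$ via Proposition~\ref{flat}, run the normal forms of Proposition~\ref{normal_form} against the conditions $\mathrm{tr}\,S_L=\mathrm{tr}\,S_L^2=0$ in case (b) (your explicit block-by-block trace computations agree with the paper's conclusions, the sign argument ruling out the diagonal and type-(ii) cases being the same point the paper makes), and keep the $d=1$ subcase of (c). The only cosmetic difference is that you detect non-flatness of blocks (i) and (iii) via $S_L^2\neq O$ where the paper uses $\mathrm{rank}\,S_L\geq 2$; both violate the flatness criterion of Proposition~\ref{flat}(b), so this changes nothing.
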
 

\begin{proof}
In case (a), a Ricci-flat metric is always flat, as shown in Proposition~\ref{flat}.\\
In case (b), we check the Ricci-flat conditions 
for each matrix in Proposition~\ref{normal_form}. 
When $S_{L}$ is a diagonal matrix, 
the Ricci-flat condition $\mathrm{tr}\, S_L^2=0$ implies $S_{L} = O$. 
This trivially satisfies the flatness conditions $S_L^2=O$ and $\mathrm{rank}\, S_L\le 1$ given in Proposition~\ref{flat}.  
When $S_{L}$ is in case (i), 
$\beta\neq0$ gives $\text{rank}\, S_L\ge2$, 
hence the metric cannot be flat.
When $S_{L}$ is in case (ii), that is, 
\begin{equation*}
S_{L} = 
\begin{bmatrix}
\, -1 + \gamma & -1 \, \\
\, 1 & 1 + \gamma \, \\
\end{bmatrix}
\oplus \, 
\mathrm{Diag}(\lambda_{3}, \ldots, \lambda_{n-1}), 
\end{equation*}
the Ricci-flat conditions imply $\gamma = \lambda_{3} = \cdots = \lambda_{n-1} = 0$. It satisfies the flat condition.
When $S_{L}$ is in case (iii), that is, 
\begin{equation*}
S_{L} = 
\begin{bmatrix}
\, -\delta & -\delta & -1 \, \\
\, \delta & \delta & 1 \, \\
\, 1 & 1 & 0 \, \\
\end{bmatrix}
\oplus \, 
\mathrm{Diag}(\lambda_{4}, \ldots, \lambda_{n-1}), 
\end{equation*}
the Ricci-flat condition $\mathrm{tr}\, S_L^2=0$ implies $\lambda_{4} = \cdots = \lambda_{n-1} = 0$. 
Since $\text{rank}\, S_L\ge 2$, the metric cannot be flat.

In case (c), the Ricci-flat metric is always flat when $d=0$ as shown in Proposition~\ref{flat}.
When $d=1$, the Ricci-flat conditions do not coincide with the flat conditions in general.
\end{proof}

We have obtained all the possible Ricci-flat but non-flat cases \eqref{eq:bi}, \eqref{eq:biii} and \eqref{eq:c1}.
The first one is the most important since it corresponds to the Petrov solution in the four-dimensional case.

\begin{cor} \label{cor:met}
    For the case \eqref{eq:bi}, the left-invariant Lorentzian metric is \eqref{main} and the Ricci-flat condition is \eqref{Ric_flat_cond}.
\end{cor}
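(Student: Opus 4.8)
The plan is to carry out two explicit block computations, since in case \eqref{eq:bi} the metric is of type (b), so $\eta$ is the matrix (b) of \eqref{eq:eta} and $A$ is given explicitly. First I would verify that the displayed $A$ is already $J$-symmetric: with $J=\mathrm{Diag}(-1,1,\ldots,1)$ a direct check gives $A^{T}=JAJ$, so its $J$-decomposition is $S_{L}=A$ and $T_{L}=O$. This is exactly what the Ricci-flat condition \eqref{eq:trsl} demands, so the case is internally consistent and we may identify $S_L$ with $A$ throughout.

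To obtain \eqref{Ric_flat_cond}, I would translate the remaining conditions $\mathrm{tr}\,S_{L}=\mathrm{tr}\,S_{L}^{2}=0$ into equations in $\alpha,\beta,\lambda_{i}$. The block form gives $\mathrm{tr}\,A=2\alpha+\sum_{i=3}^{n-1}\lambda_{i}$, and squaring the $2\times2$ block $\begin{bmatrix}\alpha&-\beta\\\beta&\alpha\end{bmatrix}$ produces a matrix of trace $2(\alpha^{2}-\beta^{2})$, so that $\mathrm{tr}\,A^{2}=2\alpha^{2}-2\beta^{2}+\sum_{i=3}^{n-1}\lambda_{i}^{2}$. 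Setting both traces to zero yields precisely \eqref{Ric_flat_cond}.

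For the metric, the spatial block of the metric matrix in \eqref{emet} is $e^{-A^{T}x_{n}}Je^{-Ax_{n}}$, and since $A^{T}=JAJ$ this simplifies to $Je^{-2Ax_{n}}$. It then remains to exponentiate $A$ block by block. Writing the $2\times2$ block as $\alpha I+\beta K$ with $K=\begin{bmatrix}0&-1\\1&0\end{bmatrix}$ gives $e^{-2\alpha x_{n}}$ times a rotation through the angle $2\beta x_{n}$, while the diagonal block gives $\mathrm{Diag}(e^{-2\lambda_{3}x_{n}},\ldots,e^{-2\lambda_{n-1}x_{n}})$. Left-multiplying the $2\times2$ block by $\mathrm{Diag}(-1,1)$ and reading off the associated quadratic form, the double-angle identities collapse it to $e^{-2\alpha x_{n}}[\cos(2\beta x_{n})(-dx_{1}^{2}+dx_{2}^{2})-2\sin(2\beta x_{n})dx_{1}dx_{2}]$; the diagonal block contributes $\sum_{i=3}^{n-1}e^{-2\lambda_{i}x_{n}}dx_{i}^{2}$ and the final coordinate contributes $dx_{n}^{2}$, giving \eqref{main}.

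I expect the only delicate point to be the sign bookkeeping in the $2\times2$ block: the factor $\mathrm{Diag}(-1,1)$ in front of $e^{-2Ax_{n}}$ must be tracked carefully through the double-angle collapse $\cos^{2}-\sin^{2}=\cos(2\beta x_{n})$, $2\sin\cos=\sin(2\beta x_{n})$, since a single sign slip there would interchange the coefficients of $dx_{1}^{2}$ and $dx_{2}^{2}$ or flip the sign of the cross term. Everything else is a mechanical block-diagonal computation.
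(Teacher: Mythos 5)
Your proposal is correct and follows essentially the same route as the paper, which simply substitutes the matrix $A$ of \eqref{eq:bi} into \eqref{emet} and into \eqref{eq:trsl}; you have merely written out the substitution in detail (the $J$-symmetry check $A^{T}=JAJ$ giving $S_{L}=A$, $T_{L}=O$, the trace computations, and the block exponentiation), and all of these details check out, including the sign bookkeeping in the $2\times2$ block.
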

\begin{proof}
    By substituting $A$ of \eqref{eq:bi} into \eqref{emet}, we have the metric \eqref{main}.
    By substituting $A$ into the Ricci-flat conditions \eqref{eq:trsl}, we have \eqref{Ric_flat_cond}.
\end{proof}

\section{Isometries of Ricci-flat solutions} \label{Petrov}

In this section, we complete the proof of the main theorem~\ref{th}.
The metric \eqref{main} and the Ricci-flat conditions \eqref{Ric_flat_cond} in the main theorem are obtained in Corollary~\ref{cor:met}.
The last condition \eqref{simp} in the main theorem is obtained in this section.

In subsection~\ref{ssec:pw}, we show that the metrics in cases \eqref{eq:biii} and \eqref{eq:c1} are plane waves, which implies that their isometry groups are multiply transitive.
All non-flat Ricci-flat homogeneous solutions with a multiply transitive isometry group are plane waves in the four-dimensional case~\cite[Theorem 12.1]{MR2003646}.
In subsection~\ref{ssec:isotropy}, we show that the identity component of the full isometry group of case \eqref{eq:bi} can be simply transitive under the condition \eqref{simp}.

The action of the isometry group is called \textit{multiply transitive}, \textit{almost simply transitive}, or \textit{simply transitive} if it is transitive and the isotropy group is positive-dimensional, discrete, or trivial, respectively. 

\subsection{Multiply-transitive cases} \label{ssec:pw}

It is known that the action of the isometry group of a homogeneous plane wave is multiply-transitive. 
See \cite[Proposition~4.5]{MR3539491} for details. 

An $n$-dimensional Lorentzian manifold 
$(M, g_{M})$ is called a \textit{pp-wave} if 
there exists a parallel null vector field $V$ such that 
it is transversally flat, that is, 
for any $X, Y \in \Gamma(V^{\bot})$ it holds that  $R(X,Y) = 0$, 
where $R$ denotes the curvature tensor of $(M, g_{M})$ and 
\[ V^{\bot} := \{ X \in \Gamma(TM) \mid g_{M}(X, V) = 0 \}, \]
which is a subbundle of the tangent bundle of $M$ with rank $n-1$.
We should remark that $V^{\bot}$ is not the complementary distribution of 
the line bundle generated by $V$. 
Actually, it trivially holds that $V \in \Gamma(V^{\bot})$. 
A Lorentzian manifold $(M, g_{M})$ is called a {\it plane wave} 
if it is a pp-wave and it holds that $\nabla_{X}R = 0$
for any $X \in \Gamma(V^{\bot})$.

A Lorentzian manifold $(M, g_{M})$ is called a \textit{pr-wave} if 
there exists a recurrent null vector field $V$ such that 
it is transversally flat. 
A vector field $V$ is \textit{recurrent} 
if there exists a $1$-form $\omega$ of $M$ such that 
$\nabla V = V \otimes \omega$, 
where $\nabla$ denotes the Levi-Civita connection of $(M, g_{M})$. 
Obviously, if it is a pp-wave, then it is a pr-wave. 
The converse is not true in general, but the following is known. 

\begin{prop}[{\cite[Proposition~6.11]{MR2254049}}] \label{pr-wave}
Let $(M, g_{M})$ be an $n$-dimensional pr-wave. 
It is a pp-wave if and only if 
its Ricci operator is totally isotropic, 
that is, for any vector fields $X, Y$ it holds that 
$g_{M}(Q(X),Q(Y))=0$,
where $Q$ is the Ricci operator of $(M, g_{M})$. 
In particular, a Ricci-flat pr-wave is a pp-wave. 
\end{prop}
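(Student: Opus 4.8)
The plan is to reduce everything to the recurrence $1$-form $\omega$ (where $\nabla V = V \otimes \omega$) and to show that the Ricci operator $Q$ is totally isotropic exactly when $\omega$ is closed, in which case $V$ can be rescaled to a parallel null field. First I would record the standard curvature identity for a recurrent field: since $\nabla_X V = \omega(X) V$, a direct computation gives
\[ R(X,Y)V = d\omega(X,Y)\, V \]
for all $X, Y$. Transversal flatness means $R(X,Y) = 0$ whenever $X, Y \in V^{\perp}$; applied to this identity (and using $V \neq 0$) it forces $d\omega$ to vanish on $V^{\perp} \times V^{\perp}$. Because $V^{\perp} = \ker V^{\flat}$ is a hyperplane, this is equivalent to
\[ d\omega = V^{\flat} \wedge \gamma \]
for some $1$-form $\gamma$, so that $d\omega = 0$ holds if and only if $\gamma$ is proportional to $V^{\flat}$.

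The computational heart is to express $Q$ through $\gamma$. I would work in a null frame $e_{+} = V,\ e_{-},\ e_{2}, \ldots, e_{n-1}$ with $g_{M}(e_{+}, e_{-}) = 1$ and $\{e_{i}\}$ orthonormal spacelike. Transversal flatness kills every curvature term except those involving $e_{-}$, and the first Bianchi identity together with the pair symmetry of $R$ let me evaluate the surviving components from $R(\cdot,\cdot)V = (V^{\flat} \wedge \gamma)V$. I expect to find that the only possibly non-zero Ricci components are $\mathrm{Ric}(e_{+}, e_{-}) = -\gamma(e_{+})$, $\mathrm{Ric}(e_{i}, e_{-}) = -\gamma(e_{i})$, and $\mathrm{Ric}(e_{-}, e_{-})$, whence
\[ Q(e_{+}) = -\gamma(e_{+}) e_{+}, \quad Q(e_{i}) = -\gamma(e_{i}) e_{+}, \quad Q(e_{-}) = \mathrm{Ric}(e_{-},e_{-})\, e_{+} - \gamma(e_{+}) e_{-} - \sum_{i} \gamma(e_{i}) e_{i}. \]
Then $g_{M}(Q(e_{+}), Q(e_{-})) = \gamma(e_{+})^{2}$, and once $\gamma(e_{+}) = 0$ one gets $g_{M}(Q(e_{-}), Q(e_{-})) = \sum_{i} \gamma(e_{i})^{2}$. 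Thus $Q$ is totally isotropic if and only if $\gamma(e_{+}) = \gamma(e_{i}) = 0$ for all $i$, that is, if and only if $\gamma$ is proportional to $V^{\flat}$, i.e. $d\omega = 0$.

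Finally I would close the loop. If $Q$ is totally isotropic then $d\omega = 0$, so locally $\omega = d\phi$ and $\widetilde{V} := e^{-\phi} V$ satisfies $\nabla \widetilde{V} = 0$; since $\widetilde{V}$ spans the same null line as $V$, it has the same orthogonal complement and is again transversally flat, so $(M, g_{M})$ is a pp-wave. Conversely, a pp-wave carries a parallel null field, which is recurrent with $\omega = 0$, hence $d\omega = 0$, and the same Ricci computation yields that $Q$ is totally isotropic. The last assertion is then immediate: if $(M, g_{M})$ is Ricci-flat then $Q = 0$ is trivially totally isotropic, so the pr-wave is a pp-wave. I expect the main obstacle to be the middle step, namely isolating the surviving curvature components under transversal flatness and extracting $Q$ in the degenerate null frame; the rescaling argument at the end is routine.
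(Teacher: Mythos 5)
The paper does not prove this proposition at all: it is imported verbatim as a citation to Leistner's work (Proposition~6.11 of the cited reference), so there is no in-paper argument to compare against. Your reconstruction is correct and self-contained. The three pillars all check out: (1) the identity $R(X,Y)V = d\omega(X,Y)\,V$ for a recurrent field, which together with transversal flatness and the pair symmetry of $R$ forces $d\omega = V^{\flat}\wedge\gamma$; (2) the null-frame computation, which I verified gives exactly $\mathrm{Ric}(e_{+},e_{-}) = -\gamma(e_{+})$, $\mathrm{Ric}(e_{i},e_{-}) = -\gamma(e_{i})$, $\mathrm{Ric}(e_{i},e_{j}) = \mathrm{Ric}(e_{+},e_{+}) = \mathrm{Ric}(e_{+},e_{i}) = 0$, and hence your formulas for $Q$, so that total isotropy of $Q$ is equivalent to $\gamma$ being proportional to $V^{\flat}$, i.e.\ to $d\omega = 0$ (the profile term $\mathrm{Ric}(e_{-},e_{-})$ harmlessly feeds only the $e_{+}$-component of $Q(e_{-})$); and (3) the rescaling $\widetilde{V} = e^{-\phi}V$. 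Two small points worth making explicit: first, $d\omega = 0$ only gives $\omega = d\phi$ locally, so on a non-simply-connected $M$ your argument produces a local pp-wave structure --- this matches the local nature of the cited statement and is immaterial for the paper's application, where the underlying manifold is $\mathbb{R}^{n}$; second, in the converse direction the parallel null field witnessing the pp-wave property need not be the original recurrent $V$, but your computation applies verbatim to whichever transversally flat null field one uses (with $\gamma = 0$), so the conclusion stands. Neither point is a gap in substance.
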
 

\begin{lem} \label{conn_curv_comm}
For the Lorentzian almost abelian Lie algebra $(\mathfrak{g}, \langle \, , \, \rangle)$ 
where $\langle \, , \, \rangle$ is one of the three cases in Lemma~\ref{metric}, it holds that 
\[ L_{X_{i}}R(X_{j}, X_{a}) = R(X_{j}, X_{a}) L_{X_{i}}, \]
where $1 \leq i, j \leq n$ and $1 \leq a \leq n$.
\end{lem} 

\begin{proof}
When $1 \leq a \leq n-1$, we calculate that 
\begin{align*}
L_{X_{i}}R(X_{j}, X_{a}) = L_{X_{i}}L_{X_{j}}L_{X_{a}} - L_{X_{i}}L_{X_{a}}L_{X_{j}} = L_{X_{j}}L_{X_{a}}L_{X_{i}} - L_{X_{a}}L_{X_{j}}L_{X_{i}} = R(X_{j}, X_{a}) L_{X_{i}}
\end{align*}
since $[X_{i}, X_{j}] = [X_{i}, X_{a}] = [X_{j}, X_{a}] = 0$. 
When $a = n$, 
a similar computation shows that
\begin{align*}
L_{X_{i}}R(X_{j}, X_{n}) &= L_{X_{i}}L_{X_{j}}L_{X_{n}} - L_{X_{i}}L_{X_{n}}L_{X_{j}} + \sum_{k=1}^{n-1}A_{j}^{k}L_{X_{i}}L_{X_{k}} \\
&= L_{X_{j}}\left(L_{X_{n}}L_{X_{i}} + \sum_{l=1}^{n-1}A_{i}^{l}L_{X_{l}}\right) - \left(L_{X_{n}}L_{X_{i}} + \sum_{l=1}^{n-1}A_{i}^{l}L_{X_{l}}\right)L_{X_{j}} + \sum_{k=1}^{n-1}A_{j}^{k}L_{X_{i}}L_{X_{k}} \\
&= L_{X_{j}}L_{X_{n}}L_{X_{i}} - L_{X_{n}}L_{X_{j}}L_{X_{i}} + \sum_{k=1}^{n-1}A_{j}^{k}L_{X_{k}}L_{X_{i}} = R(X_{j}, X_{n}) L_{X_{i}}, 
\end{align*}
where we should remark that 
\[ R(X_{j}, X_{n}) = L_{X_{j}}L_{X_{n}} - L_{X_{n}}L_{X_{j}} + \sum_{k=1}^{n-1}A_{j}^{k}L_{X_{k}}. \]
This completes the proof. 
\end{proof}

Henceforth, 
we determine Ricci-flat plane waves for almost abelian Lie groups $(G, ds^{2})$ with left-invariant Lorentzian metrics. 

\begin{lem} \label{b-iii}
For the Ricci-flat Lorentzian almost abelian Lie algebra 
$(\mathfrak{g}, \langle \, , \, \rangle_{\mathrm{b}})$, 
setting the associated matrix $A$ as \eqref{eq:biii}, the corresponding Lorentzian manifold $(G, ds^{2})$ is a plane wave. 
\end{lem}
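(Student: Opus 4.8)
The plan is to verify directly that $(G, ds^{2})$ meets the defining conditions of a pp-wave and then invoke Proposition~\ref{pp-wave}. Recall that for left-invariant fields the operator $L_{X}$ of \eqref{left_op} is exactly the Levi-Civita connection, $\nabla_{X}Y = L_{X}(Y)$, so I can work entirely inside $\mathfrak{g}$ using the operators $L_{X_{a}}$ and curvature operators $R(X_{a},X_{b})$ already computed in the proof of Proposition~\ref{flat}. Under the Ricci-flat conditions \eqref{eq:trsl} one has $T_{L}=O$ and $A=S_{L}$, and for the choice \eqref{eq:biii} the matrix $S_{L}$ is nilpotent with $S_{L}^{3}=O$; writing $S_{L}=[\bm{s}_{1},\ldots,\bm{s}_{n-1}]$ and letting $\bm{e}_{j}$ be the standard basis of $\mathbb{R}^{n-1}$, its columns satisfy $\bm{s}_{1}=\bm{s}_{2}=(-\delta,\delta,1,0,\ldots,0)^{T}$, $\bm{s}_{3}=(-1,1,0,\ldots,0)^{T}$, and $\bm{s}_{k}=\bm{0}$ for $k\ge 4$. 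These relations, together with $S_{L}\bm{s}_{3}=-\bm{s}_{1}+\bm{s}_{2}=\bm{0}$, are the only facts about $S_{L}$ the argument uses.

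First I would exhibit the parallel null vector. Set $V:=X_{1}-X_{2}$. It is null because $\langle V,V\rangle_{\mathrm{b}}=\langle X_{1},X_{1}\rangle+\langle X_{2},X_{2}\rangle=-1+1=0$. To see that $V$ is parallel, note that $L_{X_{n}}=0$ gives $\nabla_{X_{n}}V=0$, while from the explicit form $L_{X_{i}}=\left[\begin{smallmatrix} O & -\bm{s}_{i} \\ \bm{s}_{i}^{T}J & 0\end{smallmatrix}\right]$ the vector $L_{X_{i}}V$ has only an $X_{n}$-component, equal to $\bm{s}_{i}^{T}J(\bm{e}_{1}-\bm{e}_{2})=-(\bm{e}_{1}+\bm{e}_{2})^{T}\bm{s}_{i}$; this vanishes for every $i$ because rows $1$ and $2$ of $S_{L}$ are negatives of each other. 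Hence $\nabla_{X_{a}}V=0$ for all $a$, so $V$ is a (left-invariant) parallel null vector field on $G$. In particular $V$ is recurrent, so the pr-wave route through Proposition~\ref{pr-wave} is available as well, but parallelism lets me conclude the pp-wave property directly.

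Then I would check transversal flatness, i.e. $R(W,W')=0$ for all $W,W'\in V^{\bot}$. Since $\langle W,V\rangle_{\mathrm{b}}=-(w^{1}+w^{2})$, the orthogonal complement is $V^{\bot}=\mathrm{span}_{\mathbb{R}}\{V,X_{3},X_{4},\ldots,X_{n-1},X_{n}\}$, of rank $n-1$, and by bilinearity it suffices to evaluate $R$ on pairs of these basis vectors. Using $R(X_{i},X_{j})\propto (\bm{s}_{i}\bm{s}_{j}^{T}-\bm{s}_{j}\bm{s}_{i}^{T})J$ and $R(X_{n},X_{i})\propto \left[\begin{smallmatrix} O & -S_{L}\bm{s}_{i} \\ \ast & 0\end{smallmatrix}\right]$ from the proof of Proposition~\ref{flat}, every pair collapses: pairs involving $V$ vanish because $\bm{s}_{1}-\bm{s}_{2}=\bm{0}$ (so both $R(V,X_{k})$ and $R(V,X_{n})$ are zero); pairs among $X_{3},\ldots,X_{n-1}$ vanish because for $3\le k<l\le n-1$ at least one of $\bm{s}_{k},\bm{s}_{l}$ is zero (indeed $\bm{s}_{k}=\bm{0}$ for $k\ge 4$), whence $\bm{s}_{k}\bm{s}_{l}^{T}-\bm{s}_{l}\bm{s}_{k}^{T}=O$; and $R(X_{n},X_{k})=0$ for every $k\ge 3$ because $S_{L}\bm{s}_{k}=\bm{0}$ there (using $S_{L}\bm{s}_{3}=\bm{0}$ and $\bm{s}_{k}=\bm{0}$ for $k\ge 4$). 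Thus $R$ vanishes on $V^{\bot}$.

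These two steps show $(G,ds^{2})$ is a pp-wave, and since it is Ricci-flat, Proposition~\ref{pp-wave} upgrades it to a plane wave, completing the proof. The only genuinely creative step is guessing the correct null direction $V=X_{1}-X_{2}$: it must simultaneously lie in $\ker S_{L}$ (to be parallel) and be null for $\langle\,,\,\rangle_{\mathrm{b}}$, which is exactly what forces combining the timelike generator $X_{1}$ with $X_{2}$. Once $V$ is identified, everything reduces to the routine rank-one and nilpotency bookkeeping for $S_{L}$ sketched above, so I expect no serious obstacle beyond organizing those cancellations cleanly.
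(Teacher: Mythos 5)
Your proof is correct and follows essentially the same route as the paper: the same parallel null vector $V=X_{1}-X_{2}$, the same verification that $L_{X_{a}}V=0$ and that $R$ vanishes on $V^{\bot}=\mathrm{span}\{V,X_{3},\ldots,X_{n}\}$, and the same appeal to Proposition~\ref{pp-wave} to upgrade the Ricci-flat pp-wave to a plane wave. The only difference is that you spell out the column/row bookkeeping for $S_{L}$ (namely $\bm{s}_{1}=\bm{s}_{2}$, $S_{L}\bm{s}_{3}=\bm{0}$, $\bm{s}_{k}=\bm{0}$ for $k\ge4$) which the paper leaves implicit.
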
 

\begin{proof}
First, we show that it is a pp-wave. 
We define $V = X_{1} - X_{2} \in \mathfrak{g}$, 
and it gives a parallel null vector field. 
In fact, we have $L_{X_{a}}V = 0$ for $a = 1, \ldots, n$. 
Moreover, at the identity element, the fiber of the distribution $V^{\bot}$ 
is spanned by $V, X_{3}, \ldots, X_{n}$. 
Then for any $3 \leq i, j \leq n$ we have 
\[ R(V, X_{i}) = R(X_{i}, X_{j}) = 0. \]
Thus, it is a pp-wave. 
Next, we show that it becomes a plane wave. 
When $4 \leq i \leq n$, $\nabla_{X_{i}}R = 0$ is trivial. 
Regarding the definition of $\nabla_{X}R$, see Appendix~\ref{loc_symm_sp}. Let $1 \leq i, j \leq n-1$. From Lemma~\ref{conn_curv_comm}, we have
\begin{align*}
(\nabla_{X_{3}}R)(X_{i}, X_{j}) &= L_{X_{3}}R(X_{i}, X_{j}) - R(L_{X_{3}}X_{i}, X_{j}) - R(X_{i},L_{X_{3}}X_{j}) - R(X_{i}, X_{j})L_{X_{3}} = 0, \\
(\nabla_{X_{3}}R)(X_{i}, X_{n}) &= L_{X_{3}}R(X_{i}, X_{n}) - R(L_{X_{3}}X_{i}, X_{n}) - R(X_{i},L_{X_{3}}X_{n}) - R(X_{i}, X_{n})L_{X_{3}} \\ 
&= R(V, X_{i}) = 0
\end{align*}
since $L_{X_{3}}X_{i}$ is parallel to $X_{n}$ or zero, and $L_{X_{3}}X_{n} = X_{1}-X_{2} = V$ are satisfied from \eqref{eq:lxi_b}. 
Similarly, we have 
\begin{align*}
(\nabla_{X_{1}}R)(X_{i}, X_{j}) &= -R(L_{X_{1}}X_{i}, X_{j}) - R(X_{i}, L_{X_{1}}X_{j}) = -R(L_{X_{2}}X_{i}, X_{j}) - R(X_{i}, L_{X_{2}}X_{j}) \\
&= (\nabla_{X_{2}}R)(X_{i}, X_{j}) \\
(\nabla_{X_{1}}R)(X_{i}, X_{n}) &= -R(L_{X_{1}}X_{i}, X_{n}) - R(X_{i}, L_{X_{1}}X_{n}) = -R(L_{X_{2}}X_{i}, X_{n}) - R(X_{i}, L_{X_{2}}X_{n}) \\
&= (\nabla_{X_{2}}R)(X_{i}, X_{n}). 
\end{align*}
Hence, for any $X \in \mathrm{span}_{\mathbb{R}}\{V, X_{3}, \ldots, X_{n} \}$ we have $\nabla_{X}R = 0$, that is, it is a plane wave.
\end{proof}

\begin{lem} \label{c-1}
For the Ricci-flat Lorentzian almost abelian Lie algebra 
$(\mathfrak{g}, \langle \, , \, \rangle_{\mathrm{c}})$, 
setting the associated matrix $A$ as \eqref{eq:c1}, the corresponding Lorentzian manifold $(G, ds^{2})$ is a plane wave. 
\end{lem}

\begin{proof} 
First, we show that it is a pr-wave. 
We define $V = X_{n-1} \in \mathfrak{g}$, 
and it is not parallel, but a recurrent null vector. 
In fact, we have $L_{X_{i}}V = 0$ for 
$i = 1, \ldots, n-1$ and $L_{X_{n}}V = V$. 
Moreover, at the identity element, the fiber of the distribution $V^{\bot}$ 
is spanned by $X_{1}, \ldots, X_{n-1}$. 
Then, for any $1 \leq i, j \leq n-1$, we have $R(X_{i}, X_{j}) = 0$. 
Thus, it is a pr-wave. 
By Proposition~\ref{pr-wave}, it must be a pp-wave. 
Next, we show that it becomes a plane wave. 
Let $1 \leq i, j, k \leq n-2$. From Lemma~\ref{conn_curv_comm}, we have 
\begin{align*}
(\nabla_{X_{i}}R)(X_{j}, X_{k}) &= -R(L_{X_{i}}X_{j}, X_{k}) -R(X_{j}, L_{X_{i}}X_{k}) = 0, \\
(\nabla_{X_{i}}R)(X_{j}, X_{n-1}) &= -R(L_{X_{i}}X_{j}, X_{n-1}) -R(X_{j}, L_{X_{i}}X_{n-1}) = 0, \\
(\nabla_{X_{i}}R)(X_{j}, X_{n}) &= -R(L_{X_{i}}X_{j}, X_{n}) -R(X_{j}, L_{X_{i}}X_{n}) = 0, \\
(\nabla_{X_{i}}R)(X_{n-1}, X_{n}) &= -R(L_{X_{i}}X_{n-1}, X_{n}) -R(X_{n-1}, L_{X_{i}}X_{n}) = 0 
\end{align*}
since we see $L_{X_{i}}X_{j}$ is parallel to $X_{n-1}$ and $L_{X_{i}}X_{n-1} = 0$, $L_{X_{i}}X_{n} \in \mathrm{span}_{\mathbb{R}}\{X_{1}, \ldots, X_{n-2} \}$ from \eqref{eq:lxi_c} and \eqref{eq:rxi}.
Moreover, we see $\nabla_{X_{n-1}}R = 0$ by $L_{X_{n-1}} = 0$. 
Hence, for any $X \in \mathrm{span}_{\mathbb{R}}\{X_{1}, \ldots, X_{n-1} \}$ we have $\nabla_{X}R = 0$, that is, it is a plane wave.
\end{proof}

\subsection{Simply-transitive case} \label{ssec:isotropy}

Here, we consider the metric (\ref{main}) that satisfies the Ricci-flat conditions \eqref{Ric_flat_cond}.
Since $\alpha$ and  $\beta$ are uniquely determined by $\lambda_{3}, \ldots, \lambda_{n-1}$, we denote the solution by 
\[ 
\mathbb{P}(\lambda_{3}, \ldots, \lambda_{n-1}) 
= (\mathbb{R}^{n}, ds^{2}).
\]
Note that $G$ is diffeomorphic to $\mathbb{R}^{n}$ and 
$\alpha \leq 0$ if and only if $\lambda_{3} + \cdots + \lambda_{n-1} \geq 0$. 

\begin{prop} \label{trans_isom}
Let $n \geq 5$. 
For the solution $\mathbb{P}(\lambda_{3}, \ldots, \lambda_{n-1})$, 
the full isometry group acts on it almost simply transitively 
if and only if $\lambda_{3} > \ldots > \lambda_{n-1}$ holds. 
\end{prop}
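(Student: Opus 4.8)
The plan is to reduce the statement to a computation of the linear isotropy algebra. Since $G$ already acts on $\mathbb{P}(\lambda_{3},\ldots,\lambda_{n-1})$ simply transitively by left translations, the full isometry group is automatically transitive; hence the action is almost simply transitive exactly when the isotropy subgroup at the origin is discrete, i.e.\ when the only Killing field vanishing at the origin is the zero field. Writing $\mathfrak{h}\subset\mathfrak{so}(\mathfrak{g},\langle\,,\,\rangle_{\mathrm{b}})$ for the linear isotropy algebra (the image of isotropy Killing fields under $\xi\mapsto(\nabla\xi)|_{e}$, which is injective and skew-symmetric), the task becomes to show that $\mathfrak{h}=0$ if and only if the $\lambda_{i}$ are pairwise distinct. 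I would characterize $\mathfrak{h}$, as for any homogeneous space, as the skew-symmetric endomorphisms $D$ preserving the curvature data, $D\cdot R=0$ (and, if a first-order jet does not suffice, also $D\cdot\nabla R=0$), where $R$ is the curvature operator already available from the Nomizu operators $L_{X}$.

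For the necessity direction, which is the easy half, I argue by contraposition: if $\lambda_{i}=\lambda_{j}$ for some $i\neq j$ in $\{3,\ldots,n-1\}$ --- a pair that exists precisely because $n\geq5$ --- then the two warping factors coincide, $e^{-2\lambda_{i}x_{n}}=e^{-2\lambda_{j}x_{n}}$, and the metric contains the rotationally symmetric spacelike block $e^{-2\lambda_{i}x_{n}}(dx_{i}^{2}+dx_{j}^{2})$. Consequently $x_{i}\partial_{x_{j}}-x_{j}\partial_{x_{i}}$ is a Killing field that vanishes at the origin, so $\mathfrak{h}\neq0$ and the action is multiply transitive.

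For the sufficiency direction I assume the $\lambda_{i}$ pairwise distinct and compute $\mathfrak{h}$. A skew-symmetric $D$ is determined by the vector $v:=DX_{n}\in\mathfrak{a}$ (note $\langle DX_{n},X_{n}\rangle=0$) together with $D_{0}:=\mathrm{pr}_{\mathfrak{a}}\circ D|_{\mathfrak{a}}\in\mathfrak{so}(1,n-2)$, the remaining components being fixed by skew-symmetry. The eigenvalues of $A$ are the non-real pair $\alpha\pm i\beta$ (here $\beta>0$ is essential, guaranteeing they are distinct from all reals) together with the pairwise distinct reals $\lambda_{3},\ldots,\lambda_{n-1}$. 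I expect $D\cdot R=0$ to force $[D_{0},A]=0$, so that $D_{0}$ preserves each eigenspace; distinct eigenvalues then leave $D_{0}$ block-diagonal, and on each block skew-symmetry forces it to vanish (there is no nonzero $J$-skew diagonal matrix on a $\lambda$-eigenline, and the commutant $\{aI_{2}+bK\}$ of the $(X_{1},X_{2})$-block meets $\mathfrak{so}(1,1)$ only in $0$). Assembling $D_{0}=0$ and $v=0$ yields $\mathfrak{h}=0$, i.e.\ almost simple transitivity, and after relabeling the distinctness becomes the ordering $\lambda_{3}>\cdots>\lambda_{n-1}$ of \eqref{simp}.

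The main obstacle is ruling out the coupling $v$, that is, excluding an isotropy element mixing the non-abelian direction $X_{n}$ with the abelian ideal $\mathfrak{a}$. This is the genuinely non-automorphic part of the isotropy: the same mechanism makes, for example, the $ax+b$ group carry an extra rotational isometry that is \emph{not} induced by an automorphism, so it cannot be seen by counting skew-symmetric derivations and must be extracted from the curvature identity itself. I expect Ricci-flatness together with $\beta>0$ and the distinctness of the $\lambda_{i}$ to be exactly what rigidifies the solution and kills $v$; should $D\cdot R=0$ fail to pin $v$ down, I would adjoin the condition $D\cdot\nabla R=0$. The four-dimensional Petrov case, whose four Killing fields listed in the introduction already exhaust the isometry algebra, serves as the consistency check confirming $\mathfrak{h}=0$.
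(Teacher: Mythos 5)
Your framework is the right one and is essentially the paper's: since left translations already act simply transitively, everything reduces to showing that the isotropy subgroup at the origin is discrete, and both you and the paper control that isotropy by embedding it into the linear isometries of $(\mathfrak{g},\langle\,,\,\rangle_{\mathrm{b}})$ that preserve the curvature operator $R$. Your necessity half is complete and identical to the paper's: a repeated exponent $\lambda_{i}=\lambda_{j}$ produces the rotationally invariant block $e^{-2\lambda_{i}x_{n}}(dx_{i}^{2}+dx_{j}^{2})$ and hence an $\mathrm{O}(2)$ inside the isotropy group. The only structural difference in the sufficiency half is that you work with the linear isotropy algebra $\mathfrak{h}=\{D\in\mathfrak{so}(\mathfrak{g},\langle\,,\,\rangle_{\mathrm{b}})\mid D\cdot R=0\}$, whereas the paper computes the full group $\mathrm{Sym}(\mathfrak{g},\langle\,,\,\rangle_{\mathrm{b}})$ of curvature-preserving linear isometries and exhibits it as the finite set $\Gamma_{1}$ (or $\Gamma_{1}\cup\Gamma_{2}$ for spectra symmetric under $\lambda\mapsto-\lambda$). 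The infinitesimal route is legitimate and in principle lighter, since one only has to kill a Lie algebra rather than enumerate a group.

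The gap is that the two assertions carrying all the content of the sufficiency half are left as expectations rather than proved. First, you ``expect'' $D\cdot R=0$ to force $[D_{0},A]=0$. This must be extracted from the explicit curvature operators (the $R(X_{i},X_{j})$ and $R(X_{n},X_{i})$ written out in the proof of Proposition~\ref{flat}), and it is not automatic: at the group level, the paper's extra component $\Gamma_{2}$ preserves $R$ while conjugating the diagonal part of $A$ to its reversal, so ``preserves the curvature'' and ``commutes with $A$'' are genuinely different conditions, and the infinitesimal implication has to be verified rather than assumed. Second --- and you flag this yourself as the main obstacle --- the vanishing of $v=DX_{n}$ is exactly the part that separates isometric isotropy from automorphisms, and you give no argument for it beyond the hope that $D\cdot R=0$ (or, failing that, $D\cdot\nabla R=0$) suffices. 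Until both computations are actually carried out --- which is precisely what the paper's (admittedly terse) determination $\mathrm{Sym}(\mathfrak{g},\langle\,,\,\rangle_{\mathrm{b}})=\Gamma_{1}$ or $\Gamma_{1}\cup\Gamma_{2}$ accomplishes --- the sufficiency direction is not established. The linear-algebra endgame you describe (block-diagonality of $D_{0}$ from the distinctness of the eigenvalues $\alpha\pm i\beta,\lambda_{3},\ldots,\lambda_{n-1}$, triviality of each skew-symmetric block, and the observation that the commutant $\{aI_{2}+bK\}$ meets $\mathfrak{so}(1,1)$ only in $0$) is correct, but it only becomes available after those two missing steps.
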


\begin{proof}
By reordering $X_3,\dots,X_{n-1}$, we can always choose $\lambda_3\ge\dots\ge\lambda_{n-1}$.
Assume that there exists $3 \leq i \leq n-2$ such that $\lambda_{i} = \lambda_{i+1}$. 
Since the metric~(\ref{main}) contains the term $e^{-2\lambda_{i}x_{n}}(dx_{i}^{2} + dx_{i+1}^{2})$, the isotropy group contains the orthogonal group $\mathrm{O}(2)$. 
Thus, the condition $\lambda_{3} > \ldots > \lambda_{n-1}$ is necessary for the isometry group to act simply transitively.

Conversely, let $\lambda_{3} > \ldots > \lambda_{n-1}$. 
Here, we define two symmetric groups as follows: 
\begin{align*}
\mathrm{O}(\mathfrak{g}, \langle \, , \, \rangle_{\mathrm{b}}) 
&:= \{ \Phi \in \mathrm{GL}(\mathfrak{g}) = \mathrm{GL}_{n}\mathbb{R} \mid 
\langle \Phi(\cdot), \Phi(\cdot) \rangle_{\mathrm{b}} = \langle \, , \, \rangle_{\mathrm{b}} \} 
= \mathrm{O}(1, n-1), \\
\mathrm{Sym}(\mathfrak{g}, \langle \, , \, \rangle_{\mathrm{b}}) 
&:= \{ \Phi \in \mathrm{O}(\mathfrak{g}, \langle \, , \, \rangle_{\mathrm{b}}) \mid 
R(\Phi(X), \Phi(Y)) = \Phi R(X, Y) \Phi^{-1} \ 
(\forall X, Y \in \mathfrak{g}) \},
\end{align*}
where we recall that $R$ denotes the curvature operator. 
By denoting the isotropy group of the isometry group of 
$\mathbb{P}(\lambda_{3}, \ldots, \lambda_{n-1})$ at the identity element as $\mathrm{Isom}(\mathbb{P}(\lambda_{3}, \ldots, \lambda_{n-1}))_{\bm{0}}$, we observe that 
\[ \mathrm{Aut}(\mathfrak{g}, \langle \, , \, \rangle_{\mathrm{b}}) \subset 
\mathrm{Isom}(\mathbb{P}(\lambda_{3}, \ldots, \lambda_{n-1}))_{\bm{0}} \subset 
\mathrm{Sym}(\mathfrak{g}, \langle \, , \, \rangle_{\mathrm{b}}) \]
in accordance with \cite[Remark~4.3]{MR4528847}. 
We define two subsets of $\mathrm{O}(1, n-1)$ as 
\begin{align*}
\Gamma_{1} &:= \{ \mathrm{Diag}
(\tau, \tau, \tau_{3}, \ldots, \tau_{n-1}, \tau_{n}) 
\mid \tau = \pm 1, \ \tau_{l} = \pm 1 \ (3 \leq l \leq n) \}, \\ 
\Gamma_{2} &:= \left\{ 
\begin{bmatrix}
    \, \, \tau' & & & & \\
     & -\tau' & & & & \\
     & & & & \tau'_{3} \\
     & & & \iddots & \\
     & & \tau'_{n-1} & & \\
     & & & & & \tau'_{n} \, \,  \\
\end{bmatrix} 
\, \middle| \ 
\tau' = \pm 1, \ \tau'_{l} = \pm 1 \ (3 \leq l \leq n) \right\}, 
\end{align*}
and we compute the conditions $R(\Phi(X_{a}), \Phi(X_{b})) = \Phi R(X_{a}, X_{b}) \Phi^{-1}$ 
for $\{ X_{1}, \ldots, X_{n} \}$. 
In special cases where there exists a positive integer $h \geq 2$ and 
$\kappa_{1} > \cdots > \kappa_{h-1} > 0$ 
such that 
\begin{align*}
(\lambda_{3}, \ldots, \lambda_{h+1}, \lambda_{h+2}, \ldots, \lambda_{2h}) 
&= (\kappa_{1}, \ldots, \kappa_{h-1}, 
-\kappa_{h-1}, \ldots, -\kappa_{1}) \quad (n = 2h + 1 \geq 5), \\
(\lambda_{3}, \ldots, \lambda_{h+1}, \lambda_{h+2}, \lambda_{h+3}, \ldots, \lambda_{2h+1}) 
&= (\kappa_{1}, \ldots, \kappa_{h-1}, 0, 
-\kappa_{h-1}, \ldots, -\kappa_{1}) \quad (n = 2h + 2 \geq 6), 
\end{align*}
we have 
\[ 
\mathrm{Sym}(\mathfrak{g}, \langle \, , \, \rangle_{\mathrm{b}}) 
= \Gamma_{1} \cup \Gamma_{2}, 
\] 
otherwise, we have 
\[ 
\mathrm{Sym}(\mathfrak{g}, \langle \, , \, \rangle_{\mathrm{b}}) 
= \Gamma_{1}. 
\]
Since we see that the isotropy subgroup is finite, 
the action of the full isometry group is almost simply transitive. 
\end{proof} 

We call the Ricci-flat solution $\mathbb{P}(\lambda_{3}, \ldots, \lambda_{n-1})$ satisfying the simply-transitive condition~\eqref{simp} the generalized Petrov solution.

\begin{cor} \label{pl3}
    In the case $n=4$, the generalized Petrov solution $\mathbb{P}(\lambda_{3})$ is isometric to the Petrov solution \eqref{Petrov_sol}. Moreover, in this case, the full isometry group acts on it almost simply transitively. 
\end{cor}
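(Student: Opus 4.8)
The plan is to verify the case $n=4$ directly by specializing the general construction and then matching it to the explicit Petrov metric \eqref{Petrov_sol}. For $n=4$ there is a single parameter $\lambda_3$, and the Ricci-flat conditions \eqref{Ric_flat_cond} read $2\alpha + \lambda_3 = 0$ and $2\alpha^2 - 2\beta^2 + \lambda_3^2 = 0$. First I would solve these for $\alpha$ and $\beta$ in terms of $\lambda_3$: the first gives $\alpha = -\lambda_3/2$, and substituting into the second yields $\beta^2 = \alpha^2 + \tfrac12\lambda_3^2 = \tfrac34\lambda_3^2$, so $\beta = \tfrac{\sqrt 3}{2}|\lambda_3|$ (taking $\beta>0$). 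Since the sign conventions $\alpha \le 0$, $\beta>0$ are already fixed, this pins down the solution up to the scale of $\lambda_3$.

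Next I would substitute these values into the metric \eqref{main} specialized to $n=4$, where the middle sum $\sum_{i=3}^{n-1}$ degenerates to the single term $e^{-2\lambda_3 x_4}\,dx_3^2$. This produces
\begin{equation*}
ds^2 = e^{\lambda_3 x_4}\left[\cos(\sqrt 3\,\lambda_3 x_4)(-dx_1^2+dx_2^2) - 2\sin(\sqrt 3\,\lambda_3 x_4)\,dx_1\,dx_2\right] + e^{-2\lambda_3 x_4}\,dx_3^2 + dx_4^2,
\end{equation*}
using $2\beta = \sqrt 3\,\lambda_3$ and $-2\alpha = \lambda_3$. Comparing with \eqref{Petrov_sol} after dividing the latter by $k^2$, the two metrics agree precisely when $\lambda_3 = 1$, so that the exponents and trigonometric arguments match term by term; the overall constant $k$ in \eqref{Petrov_sol} is absorbed by rescaling the coordinates. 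The isometry between $\mathbb{P}(\lambda_3)$ for general $\lambda_3>0$ and the $\lambda_3=1$ form follows from the scaling transformation $x_n \mapsto \lambda_3^{-1} x_n$ together with a compensating rescaling of the remaining coordinates, which I would record explicitly. For the almost-simple-transitivity claim, the case $n=4$ falls outside the hypothesis $n\ge 5$ of Proposition~\ref{trans_isom}, so I would either redo the short isotropy computation directly — there are no equal-$\lambda_i$ degeneracies to worry about since there is only one $\lambda$ — or invoke the known result that the Petrov solution's maximal isometry group is four-dimensional and simply transitive, as cited from \cite{MR2003646} in the introduction.

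The main obstacle I anticipate is the $n=4$ boundary of Proposition~\ref{trans_isom}: the isotropy analysis there was carried out under $n\ge 5$, and the special symmetric families that enlarge $\mathrm{Sym}(\mathfrak g,\langle\,,\,\rangle_{\mathrm b})$ beyond $\Gamma_1$ require $n\ge 5$ as well, so the four-dimensional case needs its own (simpler) verification that the isotropy group is finite. Fortunately this is genuinely easier here, because with a single middle coordinate there is no possibility of an $\mathrm{O}(2)$ factor from coincident exponents; the isotropy group is just the finite stabilizer of the curvature operator, and the simple-transitivity of the Petrov solution is in any case already established in the literature. The remaining steps — solving two equations and matching coefficients — are routine algebra once the specialization is set up correctly.
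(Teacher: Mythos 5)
Your proposal is correct and follows essentially the same route as the paper's proof: solve the two Ricci-flat conditions to get $\lambda_{3}=-2\alpha$ and $2\beta=\sqrt{3}\lambda_{3}$, rescale all four coordinates by $-2\alpha=\lambda_{3}$ (absorbing the constant into $k^{2}=4\alpha^{2}$) to recover \eqref{Petrov_sol}, and then note that condition \eqref{simp} is vacuous for $n=4$ while the isotropy computation of Proposition~\ref{trans_isom} still goes through and gives $\mathrm{Sym}(\mathfrak{g},\langle\,,\,\rangle_{\mathrm{b}})=\Gamma_{1}$, hence a finite isotropy group. Your explicit flagging of the $n\ge 5$ hypothesis in Proposition~\ref{trans_isom} is exactly the point the paper addresses by redoing "the same computation" for $n=4$, so no genuine gap remains.
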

\begin{proof}
    The simply-transitive condition on the generalized Petrov solution~\eqref{simp} is trivially satisfied. 
    The Ricci-flat conditions~\eqref{Ric_flat_cond} give $\beta=-\sqrt{3}\alpha,\ \lambda_3=-2\alpha$. 
    Rescaling the coordinates as
    \[x_a'=-2\alpha x_a\ (a=1,\dots,4) \] 
    and defining $k^2=4\alpha^2$ 
    yields \eqref{Petrov_sol}. 
    Moreover, in this case, 
    the same computation as in Proposition~\ref{trans_isom} yields
    \[ \mathrm{Sym}(\mathfrak{g}, \langle \, , \, \rangle_{\mathrm{b}}) 
= \Gamma_{1} \]
    Consequently, the full isometry group acts on it almost simply transitively. 
\end{proof}

\subsubsection*{Proof of Theorem~\ref{th}}
Combining Propositions~\ref{Ric_flat2},  \ref{trans_isom}, Corollary~\ref{cor:met} and Lemmas~\ref{b-iii}, \ref{c-1}, we obtain the claim of Theorem~\ref{th}. 
By Proposition~\ref{trans_isom}, the isotropy subgroup of the full isometry group is finite. 
Hence the isotropy subgroup of the identity component of the full isometry group is also finite. 
Since the identity component has dimension \(n\), all its orbits are open in the underlying manifold \(\mathbb{R}^{n}\). 
As \(\mathbb{R}^{n}\) is connected, the action is transitive. 
Moreover, the orbit map from the identity component to \(\mathbb{R}^{n}\) is then a covering map with finite fiber. 
Since the identity component is connected and \(\mathbb{R}^{n}\) is simply connected, this covering must be trivial. 
Thus the isotropy subgroup of the identity component is trivial. 
Therefore, the identity component of the full isometry group acts simply transitively. \qed


\begin{rem} 
Note that we have \cite[Corollary~2.8]{MR2864471}
\[ \mathrm{Aut}(\mathfrak{g}, \langle \, , \, \rangle_{\mathrm{b}}) 
= \mathrm{Isom}(\mathbb{P}(\lambda_{3}, \ldots, \lambda_{n-1}))_{\bm{0}}. \]
For $\Phi \in \Gamma_{1}$, it is an automorphism if and only if $\tau_n = 1$.
For $\Phi \in \Gamma_{2}$, 
it is an automorphism if and only if $\tau_n' = -1$. 
Hence 
$\mathrm{Isom}(\mathbb{P}(\lambda_{3}, \ldots, \lambda_{n-1}))_{\bm{0}} \varsubsetneq 
\mathrm{Sym}(\mathfrak{g}, \langle \, , \, \rangle_{\mathrm{b}})$. 

Moreover, we should note that the solution that admits a simply-transitive isometric action may be decomposable as a Lie group. 
See Appendix~\ref{indecom} for the decomposability of the solutions. 
\end{rem} 

\begin{cor}
The isotropy subgroup
$\mathrm{Isom}(\mathbb{P}(\lambda_{3}, \ldots, \lambda_{n-1}))_{\bm{0}}$ is isomorphic to the following finite group: 
\begin{align*}
\mathbb{Z}_{2}^{n-2} \quad 
&(\mathrm{when} \ n \geq 4 \ \mathrm{and} \ 
\mathrm{Sym}(\mathfrak{g}, \langle \, , \, \rangle_{\mathrm{b}})  = \Gamma_{1}), \\ 
\mathbb{Z}_{2}^{n-4} \rtimes D_{4} \quad 
&(\mathrm{when} \ n \geq 5 \ \mathrm{and} \ \mathrm{Sym}(\mathfrak{g}, \langle \, , \, \rangle_{\mathrm{b}}) = \Gamma_{1} \cup \Gamma_{2}),
\end{align*}
where $D_{4}$ denotes the dihedral group of order $8$. 
Moreover, when $n = 5$ only, the semidirect product $\mathbb{Z}_{2} \rtimes D_{4}$ becomes the product $\mathbb{Z}_{2} \times D_{4}$. 
\end{cor}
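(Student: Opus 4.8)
The plan is to identify the isotropy group with the automorphism group of the Lorentzian Lie algebra and then read off its structure from the explicit description of $\mathrm{Sym}(\mathfrak{g},\langle\,,\,\rangle_{\mathrm{b}})$. By the preceding Remark we have $\mathrm{Isom}(\mathbb{P}(\lambda_{3},\ldots,\lambda_{n-1}))_{\bm{0}}=\mathrm{Aut}(\mathfrak{g},\langle\,,\,\rangle_{\mathrm{b}})$, and by Proposition~\ref{trans_isom} this group sits inside $\Gamma_{1}$ or $\Gamma_{1}\cup\Gamma_{2}$. The same Remark records that an element of $\Gamma_{1}$ is an automorphism iff $\tau_{n}=1$, while an element of $\Gamma_{2}$ is an automorphism iff $\tau_{n}'=-1$. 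I would therefore split the proof according to whether $\mathrm{Sym}(\mathfrak{g},\langle\,,\,\rangle_{\mathrm{b}})=\Gamma_{1}$ or $\Gamma_{1}\cup\Gamma_{2}$, and in each case simply determine the isomorphism type of the subgroup cut out by these sign conditions.

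In the case $\mathrm{Sym}(\mathfrak{g},\langle\,,\,\rangle_{\mathrm{b}})=\Gamma_{1}$ the isotropy group is $\{\mathrm{Diag}(\tau,\tau,\tau_{3},\ldots,\tau_{n-1},1)\mid \tau,\tau_{l}=\pm1\}$. Its parameters $\tau,\tau_{3},\ldots,\tau_{n-1}$ are $n-2$ independent commuting involutions, so the group is $\mathbb{Z}_{2}^{n-2}$; this accounts for the first line of the statement for every $n\ge4$ with $\mathrm{Sym}(\mathfrak{g},\langle\,,\,\rangle_{\mathrm{b}})=\Gamma_{1}$, including the $n=4$ situation of Corollary~\ref{pl3}.

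For the case $\mathrm{Sym}(\mathfrak{g},\langle\,,\,\rangle_{\mathrm{b}})=\Gamma_{1}\cup\Gamma_{2}$ I would exhibit generators and relations. Put $m=n-3$; let $t$ be the flip of the first coordinate pair (the $\tau$-parameter), let $u_{1},\ldots,u_{m}$ be the flips of the middle coordinates ($\tau_{3},\ldots,\tau_{n-1}$), and let $\sigma'$ be the element of $\Gamma_{2}$ with all signs positive and $\tau_{n}'=-1$. From the anti-diagonal shape of $\Gamma_{2}$ one checks that $t,u_{1},\ldots,u_{m}$ are commuting involutions, that $\sigma'^{2}=1$ and $\sigma't\sigma'=t$, and that $\sigma'u_{k}\sigma'=u_{m+1-k}$; moreover $H\cap\Gamma_{1}=\langle t,u_{1},\ldots,u_{m}\rangle\cong\mathbb{Z}_{2}^{n-2}$ is an index-two subgroup and $|H|=2^{n-1}$. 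The decisive step is to peel off a dihedral factor from the outermost swapped pair: set $D:=\langle u_{1},u_{m},\sigma'\rangle$ and verify that $u_{1}\sigma'$ has order four, since $(u_{1}\sigma')^{2}=u_{1}u_{m}\neq1$, and that $u_{1}(u_{1}\sigma')u_{1}=(u_{1}\sigma')^{-1}$, so that $D\cong D_{4}$. I would then take $N:=\langle t,u_{2},\ldots,u_{m-1}\rangle\cong\mathbb{Z}_{2}^{n-4}$ as a complement: it is abelian, it is normalized by $D$ because $u_{1},u_{m}$ commute with its generators while $\sigma'$ permutes $u_{2},\ldots,u_{m-1}$ among themselves and fixes $t$, it meets $D$ trivially, and $|N|\,|D|=2^{n-4}\cdot 8=2^{n-1}=|H|$. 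Hence $N\trianglelefteq H$ and $H=N\rtimes D\cong\mathbb{Z}_{2}^{n-4}\rtimes D_{4}$.

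Finally, for the ``moreover'' statement with $n=5$ we have $m=2$, so $N=\langle t\rangle\cong\mathbb{Z}_{2}$ and $D=\langle u_{1},u_{2},\sigma'\rangle$; since $\sigma'$ fixes $t$ and $u_{1},u_{2}$ commute with $t$, the conjugation action of $D$ on $N$ is trivial, and the extension splits as the direct product $\mathbb{Z}_{2}\times D_{4}$. The main obstacle I anticipate is the characteristic-two behaviour of the reversal $u_{k}\mapsto u_{m+1-k}$: a swapped pair has only a one-dimensional fixed line $\langle u_{k}u_{m+1-k}\rangle$ and no invariant complement, so the naive attempt to produce one $\mathbb{Z}_{2}\wr\mathbb{Z}_{2}$ per pair fails and the $D_{4}$ must be extracted from a single outermost pair as above. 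The bookkeeping is further complicated by the parity of $n$, since for even $n$ the reversal fixes a central middle coordinate that must be assigned to $N$; checking that $N$ is normal in all of $H$ (not merely under $D$) and pinning down exactly when the induced action on $N$ is trivial is the delicate point.
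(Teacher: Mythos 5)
Your proposal is correct and takes essentially the same route as the paper's proof: both identify the isotropy group with $\Gamma_{1}'\cup\Gamma_{2}'$ via the automorphism group, take the normal subgroup generated by the flip of the first pair together with the flips of the inner middle coordinates (your $N=\langle t,u_{2},\ldots,u_{m-1}\rangle$ is exactly the paper's $\langle a_{1},\ldots,a_{n-4}\rangle$), and extract a complementary $D_{4}$ from the outermost swapped pair and the reversal, checking the conjugation action to settle the $n=5$ case. The only cosmetic difference is the choice of generators for the $D_{4}$ factor (two involutions $u_{1},\sigma'$ versus the paper's order-four element $r=\sigma'u_{1}$ and reflection $s=tu_{1}$), which yields a different but equally valid complement.
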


\begin{proof} 
First of all, we define two subsets as follows; 
\[ \Gamma_{1}' := \{ \Phi \in \Gamma_{1} \mid \tau_{n} = 1 \}, \quad 
\Gamma_{2}' := \{ \Phi \in \Gamma_{2} \mid \tau'_{n} = -1 \}. \]
In the former case, it is easy to see that 
\begin{align*} 
\mathrm{Isom}(\mathbb{P}(\lambda_{3}, \ldots, \lambda_{n-1}))_{\bm{0}} 
= \mathrm{Aut}(\mathfrak{g}, \langle \, , \, \rangle_{\mathrm{b}}) 
= \Gamma_{1}', 
\end{align*}
which is obviously isomorphic to $\mathbb{Z}_{2}^{n-2}$. 
In the latter case, we similarly see that 
\begin{align*} 
\mathrm{Isom}(\mathbb{P}(\lambda_{3}, \ldots, \lambda_{n-1}))_{\bm{0}} 
= \mathrm{Aut}(\mathfrak{g}, \langle \, , \, \rangle_{\mathrm{b}}) 
= \Gamma_{1}' \cup \Gamma_{2}'. 
\end{align*}
In addition, we give a generator of the group 
$\Gamma' := \Gamma_{1}' \cup \Gamma_{2}'$ as follows; 
\begin{align*}
a_{1} &= \mathrm{Diag}(-1, -1, 1, \ldots, 1) \in \Gamma_{1}', \\  
a_{2} &= \mathrm{Diag}(1, 1, 1, -1, 1, \ldots, 1, 1) \in \Gamma_{1}', \\
a_{3} &= \mathrm{Diag}(1, 1, 1, 1, -1, 1, \ldots, 1, 1) \in \Gamma_{1}', \\
\vdots \\
a_{n-4} &= \mathrm{Diag}(1, 1, 1, \ldots, 1, -1, 1, 1) \in \Gamma_{1}', \\ 
s &= \mathrm{Diag}(-1, -1, -1, 1, \ldots, 1) \in \Gamma_{1}', \\ 
r &= 
\begin{bmatrix}
     1 & & & & & & \\
     & -1 & & & & & & \\
     & & & & & 1 & \\
     & & & & \iddots & & \\
     & & & 1 & & & \\
     & & -1 & & &\\
     & & & & & &-1 \\
\end{bmatrix}
\in \Gamma_{2}', 
\end{align*}
then we see that $\langle r, s, a_{1}, \ldots, a_{n-4} \rangle = \Gamma'$, 
and $H := \langle r, s \rangle \cong D_{4}, \ N := \langle a_{1}, \ldots, a_{n-4} \rangle \cong \mathbb{Z}_{2}^{n-4}$ by direct computation. 
It is obvious that 
$N$ is a normal subgroup of $\Gamma'$ and $N \cap H = \{ I_{n} \}$. 
For the conjugations $\varphi_{s}, \, \varphi_{r}$ for $s, r \in H$, 
we see that 
\[
\varphi_{s} = 1_{N}, \quad \varphi_{r}(a_{1}) = ra_{1}r^{-1} = a_{1}, \quad 
\varphi_{r}(a_{l}) = ra_{l}r^{-1} = a_{n-2-l} \quad 
(n \geq 6 \ \mathrm{and} \ 2 \leq l \leq n-4).  
\]
In particular, we have $\Gamma' = N H$ from the above results, 
then we obtain the claim. 
\end{proof}

\begin{rem}
Since the isotropy subgroup of the isometry group of the generalized Petrov solution is discrete,
the isometry group is given by
\[
\mathrm{Isom}(\mathbb{P}(\lambda_{3}, \ldots, \lambda_{n-1}))
= L(G) \rtimes \mathrm{Aut}(\mathfrak{g}, \langle \, , \, \rangle_{\mathrm{b}}),
\]
where $L(G)$ denotes the group of left translations of $G$;
in particular, the identity component is precisely $L(G)$, which is naturally isomorphic to $G$. 
For details, see \cite[Lemma~2.2]{MR2864471}.
Consequently, non-isomorphic almost abelian Lie groups cannot be isometric.
When $n = 4$, the isometry class is unique up to scaling of the metric
and coincides with the classical Petrov solution.
In contrast, for $n \ge 5$, there exist infinitely many isometry classes up to scaling, since there are infinitely many isomorphism classes satisfying the Ricci-flat conditions \eqref{Ric_flat_cond}. 
\end{rem}

\section{Properties of the generalized Petrov solution} \label{property}
In this section, we show that the generalized Petrov solution $\mathbb{P}(\lambda_{3}, \ldots, \lambda_{n-1})$ is geodesically complete and admits closed timelike curves.

For Propositions~\ref{geod_cplt} and \ref{causality}, 
we should note that the Ricci-flat conditions \eqref{Ric_flat_cond} and the conditions for simply-transitive action \eqref{simp} are not needed. 
Actually, for arbitrary parameters $\alpha, \beta, \lambda_{3}, \ldots, \lambda_{n-1}$ and $n \geq 4$, these claims hold as long as $\beta$ is nonzero.
We denote the metric by $g_{\mathbb{R}^{n}}$ instead of $ds^{2}$ in this section. 

\subsection{Completeness}
A homogeneous Riemannian manifold is complete~\cite[Theorem~7.19]{MR2371700}.
This is not true for homogeneous Lorentzian manifolds.
However, we can show the Proposition below.

\begin{prop} \label{geod_cplt}
Let $\lambda_{3}, \ldots, \lambda_{n-1}\in \mathbb{R}$, $\alpha\le0,\ \beta > 0$, and $g_{\mathbb{R}^{n}}$ be the Lorentzian metric on $\mathbb{R}^{n}$ given by (\ref{main}). 
Then the homogeneous Lorentzian manifold 
$(\mathbb{R}^{n}, g_{\mathbb{R}^{n}})$ is geodesically complete. 
\end{prop}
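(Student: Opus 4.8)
The plan is to write the geodesic equations explicitly in the coordinates $x_1,\dots,x_n$ and show that they admit global solutions on all of $\mathbb{R}$. Since the metric \eqref{main} has all coefficients depending only on $x_n$, the structure is that of a generalized plane-wave/warped-product form, and the key simplification is that the metric components are smooth, bounded-below-in-modulus exponentials. The strategy is to exploit the Killing vectors $\partial_{x_1},\dots,\partial_{x_{n-1}}$ (which exist because the metric coefficients are independent of $x_1,\dots,x_{n-1}$) to produce $n-1$ conserved quantities along any geodesic, reducing the system to a single second-order ODE for $x_n$.

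\emph{First} I would observe that for a geodesic $\gamma(t)=(x_1(t),\dots,x_n(t))$, each translational Killing field $\partial_{x_j}$ ($1\le j\le n-1$) yields a constant of motion $c_j := g_{\mathbb{R}^n}(\dot\gamma,\partial_{x_j})$. Writing $\mathbf{u}=(x_1,x_2)^T$ for the first block and using the block-diagonal form of the metric, these conservation laws express $\dot x_1,\dot x_2,\dot x_3,\dots,\dot x_{n-1}$ algebraically in terms of $x_n$ and the constants $c_1,\dots,c_{n-1}$; explicitly $\dot x_i = e^{2\lambda_i x_n}c_i$ for $3\le i\le n-1$, and the first block inverts to give $\dot x_1,\dot x_2$ as $e^{2\alpha x_n}$ times a rotation (by angle $2\beta x_n$) applied to $(c_1,c_2)$. \emph{Next} I would use the conserved ``energy'' $E:=g_{\mathbb{R}^n}(\dot\gamma,\dot\gamma)$, which is constant along any geodesic; substituting the expressions for $\dot x_1,\dots,\dot x_{n-1}$ into $E$ isolates $\dot x_n^2$ as an explicit smooth function $F(x_n)$:
\begin{equation*}
\dot x_n^2 = E - e^{2\alpha x_n}\bigl(-c_1^2+c_2^2\bigr)\cos(2\beta x_n) + 2e^{2\alpha x_n}c_1 c_2\sin(2\beta x_n) - \sum_{i=3}^{n-1}e^{2\lambda_i x_n}c_i^2 =: F(x_n).
\end{equation*}

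\emph{Then} the problem reduces to showing that the scalar second-order equation for $x_n$, obtained either by differentiating the first integral or directly from the geodesic equation $\ddot x_n = \tfrac12 F'(x_n)$, has solutions defined for all $t\in\mathbb{R}$. The standard completeness criterion is to rule out finite-time blow-up: I would argue that $x_n(t)$ cannot escape to $\pm\infty$ in finite parameter time. The point is that $F$ and $F'$ are smooth with at most exponential growth in $x_n$; by an a-priori estimate (e.g.\ a Gr\"onwall-type bound on $|\dot x_n|$ combined with boundedness of $x_n$ on finite intervals, or by analyzing the one-dimensional ``potential'' dynamics $\ddot x_n=\tfrac12 F'(x_n)$ and noting the energy-like first integral prevents escape in finite time), every maximal solution extends to all of $\mathbb{R}$. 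Once $x_n(t)$ is globally defined, the remaining coordinates are recovered by direct integration of the smooth bounded-on-compacts expressions for $\dot x_1,\dots,\dot x_{n-1}$, so they too extend globally.

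\emph{The hard part} will be the a-priori bound ruling out finite-time escape of $x_n$, since $F$ contains exponentials $e^{2\lambda_i x_n}$ of mixed sign in the exponent and the geodesic may be timelike, null, or spacelike ($E$ of either sign). The delicate case is when $x_n\to+\infty$ or $-\infty$: one must verify that the ODE $\dot x_n^2=F(x_n)$ does not allow reaching infinity in finite $t$, which for a first-order autonomous equation reduces to showing the improper integral $\int^{\pm\infty}dx_n/\sqrt{F(x_n)}$ diverges (or that $\dot x_n$ stays bounded). Because the dominant exponential terms force $F$ to grow, the velocity $|\dot x_n|\sim e^{c\,x_n}$ could in principle produce blow-up; the resolution is that wherever $F$ grows large the motion is confined (the trajectory turns around at zeros of $F$), so $x_n$ stays in a region where $F$ is controlled, and I would make this precise by a careful case analysis on the signs of $\alpha,\lambda_i$ and $E$, using $\alpha\le 0$ to control the first block. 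Handling these sign cases uniformly is the main technical obstacle; everything else is routine integration.
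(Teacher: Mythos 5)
Your reduction is sound and takes a genuinely different route from the paper. The paper never works in coordinates: it invokes the Euler--Arnold equations for a left-invariant metric, obtains the first-order system \eqref{ODE_1} on the Lie algebra with the single quadratic first integral $-u_1^2+u_2^2+\cdots+u_n^2=C$, and then bounds the solution by passing to polar coordinates in the $(u_1,u_2)$-plane: from $\dot r=\alpha u_n r$ and $\dot\theta=\beta u_n$ one gets $r=r_0\exp\{(\alpha/\beta)(\theta-\theta_0)\}$, the conserved quantity forces $\theta$ to be bounded below, $\alpha\le 0$ and $\beta>0$ then bound $r$, and boundedness of the whole solution gives global existence. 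You instead use the $n-1$ translational Killing fields plus the energy to reduce to the scalar relation $\dot x_n^2=F(x_n)$; your formula for $F$ is correct (the $2\times 2$ block of \eqref{main} is $e^{-2\alpha x_n}$ times an involutive matrix, so it inverts exactly as you use, though the resulting map is a reflection rather than a rotation). This mechanical reduction is more standard and, once completed, arguably more transparent than the Euler--Arnold route.

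However, the decisive step is left as a sketch, and the stated resolution (``wherever $F$ grows large the motion is confined'') is not by itself the reason the argument closes. What must be checked is: in each direction $x_n\to\pm\infty$, either $F$ is bounded above near that end (so $|\dot x_n|$ is bounded and escape takes infinite time), or $F$ becomes negative along a sequence tending to that end (so the connected component of $\{F\ge 0\}$ containing the trajectory is bounded in that direction and $x_n$ cannot escape). Toward $+\infty$ the first alternative holds because $\alpha\le 0$ keeps $e^{2\alpha x_n}$ bounded there and every term $-e^{2\lambda_i x_n}c_i^2$ is nonpositive. Toward $-\infty$ the only term that can push $F$ up without bound is $-e^{2\alpha x_n}h(x_n)$ with $h(x_n)=(c_2^2-c_1^2)\cos(2\beta x_n)-2c_1c_2\sin(2\beta x_n)$, and here $\beta>0$ is essential: $h$ is a sinusoid of amplitude $c_1^2+c_2^2$, so if $(c_1,c_2)\neq(0,0)$ and $\alpha<0$ then $-e^{2\alpha x_n}h(x_n)$ attains the value $-e^{2\alpha x_n}(c_1^2+c_2^2)\to-\infty$ along a sequence $x_n\to-\infty$, forcing $F<0$ there; if instead $(c_1,c_2)=(0,0)$ or $\alpha=0$, then $F\le E+c_1^2+c_2^2$ is bounded above. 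Note that a bound of the form $F\le Ce^{2|\alpha|\,|x_n|}$ alone would \emph{not} suffice, since $\int^{-\infty}dx/\sqrt{Ce^{2|\alpha|\,|x|}}$ converges; it is precisely the oscillation at frequency $2\beta$ that rescues the argument, playing the same role as $\dot\theta=\beta u_n$ in the paper's polar-coordinate step. With this case analysis supplied (and the remaining coordinates recovered by quadrature, as you say), your proof closes.
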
 

\begin{proof}
To investigate the geodesic completeness of a Lie group with a left-invariant metric, it suffices to show the existence of entire solutions of the corresponding Arnold--Euler equation. 
According to \cite[2.2~General fact]{MR1430782}, 
when the restriction of $\langle \, , \, \rangle$ on $\mathfrak{a}$ is non-degenerate for a Lorentzian almost abelian Lie algebra $(\mathfrak{g}, \langle \, , \, \rangle)$, the corresponding equations are given by 
\begin{equation}
 \dot{u} = u_{n} \left[ (\mathrm{ad} \, X_{n})^{\ast}u \right], 
\quad \dot{u}_{n} = - \frac{\langle [X_{n}, u], u \rangle}{\langle X_{n}, X_{n} \rangle}, \label{AE_eq}
\end{equation}
where $u = u(t) \in \mathfrak{a}$ and 
$(\mathrm{ad} \, X_{n})^{\ast}$ denotes the adjoint of $\mathrm{ad} \, X_{n}$ with respect to $\langle \,, \, \rangle$. 
In the case that the structure constants are expressed by 
\[ A = 
\begin{bmatrix}
\, \, \alpha & -\beta \, \\
\, \, \beta & \alpha \, \\
\end{bmatrix}
\oplus \, 
\mathrm{Diag}(\lambda_{3}, \ldots, \lambda_{n-1}), \]
and the metric is given by $\langle \, , \, \rangle_{\mathrm{b}}$, 
the equation (\ref{AE_eq}) is reduced to 
\begin{equation}
\left\{ \,
    \begin{aligned}
     \dot{u}_{1} &= u_{n}(\alpha u_{1} - \beta u_{2}), \\
     \dot{u}_{2} &= u_{n}(\beta u_{1} + \alpha u_{2}), \\
     \dot{u}_{3} &= \lambda_{3}u_{n}u_{3}, \\
     &\vdots \\
     \dot{u}_{n-1} &= \lambda_{n-1}u_{n}u_{n-1}, \\
     \dot{u}_{n} &= \alpha(u_{1}^{2} - u_{2}^{2}) - 2\beta u_{1}u_{2} - \lambda_{3}u_{3}^{2} - \cdots - \lambda_{n-1}u_{n-1}^{2}. \\
    \end{aligned}
\right. \label{ODE_1}
\end{equation}
The equation has a conserved quantity 
\begin{align*}
-u_{1}^{2} + u_{2}^{2} + \cdots + u_{n}^{2} = C,
\end{align*}
where $C$ is a constant. 
When the initial values satisfy $(u_{1}(t_0), u_{2}(t_0)) = (0, 0)$, 
the system given by 
$(u_1, u_2, u_3, \ldots, u_{n}) = (0, 0, v_{3}, \ldots, v_{n})$ satisfies
\begin{equation}
\left\{ \,
    \begin{aligned}
     \dot{v}_{3} &= \lambda_{3}v_{n}v_{3}, \\
     &\vdots \\
     \dot{v}_{n-1} &= \lambda_{n-1}v_{n}v_{n-1}, \\
     \dot{v}_{n} &= - \lambda_{3}v_{3}^{2} - \cdots - \lambda_{n-1}v_{n-1}^{2}. \\
    \end{aligned}
\right. \label{ODE_2}
\end{equation}
The solution of the above equation (\ref{ODE_2}) satisfies 
\[ v_{3}^{2} + \cdots + v_{n}^{2} = C, \]
which is contained in a bounded closed set. 
Thus, the solution of (\ref{ODE_1}) exists globally. 
Next, we assume that $(u_{1}(t_0), u_{2}(t_0)) \neq (0, 0)$ as initial values. 
Then the solution $(u_{1}, u_{2})$ cannot pass the origin $(0, 0)$. 
Setting 
\[ u_{1}(t) = r(t)\cos{\theta(t)}, \quad u_{2}(t) = r(t) \sin{\theta(t)}, \]
we see that $r(t)$ is positive and $\theta(t)$ is continuous. 
Then we compute that 
\[ \dot{r} = \alpha u_{n}r, \quad \dot{\theta} = \beta u_{n}. \]
by using (\ref{ODE_1}). It implies 
\[ \frac{\dot{r}}{r} = \frac{\alpha}{\beta}\dot{\theta}. \]
By integrating and 
setting $r_{0}=r(t_{0}), \, \theta_{0} = \theta(t_{0})$, 
we have 
\[ r = r_{0} \exp{\left\{\frac{\alpha}{\beta}
\left(\theta - \theta_{0}\right)\right\}}. \]
On the other hand, since we have 
\[ u_{3}^{2} + \cdots + u_{n}^{2} = C + u_{1}^{2} - u_{2}^{2} 
= C + r^{2}\cos{2\theta} 
= C + r_{0}^{2} \exp{\left\{\frac{2\alpha}{\beta}(\theta-\theta_{0})\right\}}\cos{2\theta} \]
from the conserved quantity, we must have 
\[ r_{0}^{2} \exp{\left\{\frac{2\alpha}{\beta}(\theta-\theta_{0})\right\}}\cos{2\theta} \geq -C. \]
Thus, $\theta(t)$ 
is bounded below because it is continuous. 
Since $r(t)$ is bounded, 
$u_{1}(t)$ is bounded. 
Consequently, the conserved quantity ensures that $u_{2}(t), \ldots, u_{n}(t)$ are also bounded. 
Hence, the solution of (\ref{ODE_1}) exists globally since it is contained in a bounded closed set. 
\end{proof}

\subsection{Closed timelike curve}

A Lorentzian manifold $(M, g_{M})$ is \textit{totally vicious} 
if every point of $M$ lies on a closed timelike curve (CTC).
We show that $\mathbb{R}^n$ equipped with the metric \eqref{main} is totally vicious. 
In particular, it is not globally hyperbolic. 

\begin{prop} \label{causality}
Let $\lambda_{3}, \ldots, \lambda_{n-1}\in\mathbb{R}$, $\alpha\le0$, $\beta > 0$, and $g_{\mathbb{R}^{n}}$ be the Lorentzian metric on $\mathbb{R}^{n}$ given by (\ref{main}). 
Then the homogeneous Lorentzian manifold 
$(\mathbb{R}^{n}, g_{\mathbb{R}^{n}})$ is totally vicious. 
\end{prop}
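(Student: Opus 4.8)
The plan is to exploit the homogeneity of $(\mathbb{R}^{n}, g_{\mathbb{R}^{n}})$: since the manifold is a Lie group acting on itself by isometries (left translations), it suffices to construct a single closed timelike curve through the identity element $\bm{0}$, and then translate it to obtain a CTC through every point. So the whole problem reduces to finding one explicit loop $\gamma \colon [0, T] \to \mathbb{R}^{n}$ with $\gamma(0) = \gamma(T) = \bm{0}$ whose velocity is everywhere timelike with respect to \eqref{main}.

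First I would look at the two-dimensional block carrying the rotation, namely the $(x_1, x_2)$-plane coupled to $x_n$ through the term $e^{-2\alpha x_n}[\cos(2\beta x_n)(-dx_1^2 + dx_2^2) - 2\sin(2\beta x_n)\,dx_1 dx_2]$. The key observation is that the sign of this quadratic form in $(dx_1, dx_2)$ oscillates as $x_n$ varies, because of the $\cos(2\beta x_n)$ and $\sin(2\beta x_n)$ factors; since $\beta > 0$, one can always find a value (or an interval) of $x_n$ where the form is negative-definite on a suitable direction, giving a timelike direction purely in the spatial $(x_1, x_2)$ coordinates. The natural ansatz is therefore a closed curve that stays in a hyperplane where $x_n$ is held at a constant value $x_n = c$ chosen so that the $2\times 2$ block $e^{-2\alpha c}\begin{bmatrix} -\cos(2\beta c) & -\sin(2\beta c) \\ -\sin(2\beta c) & \cos(2\beta c) \end{bmatrix}$ has a negative eigenvalue in a convenient direction; then a small circle (or an ellipse) in the $(x_1, x_2)$-plane at height $x_n = c$, traversed in that timelike direction, is a genuine closed timelike curve requiring no coordinate identification. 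I would verify directly that along this loop $g_{\mathbb{R}^{n}}(\dot\gamma, \dot\gamma) < 0$ everywhere, using that $dx_3 = \cdots = dx_{n-1} = dx_n = 0$ on the curve so only the $2\times 2$ block contributes.

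The main obstacle I anticipate is making sure the timelike condition holds \emph{uniformly} along the entire closed loop, not just pointwise. A circle in the $(x_1,x_2)$-plane sweeps through all directions $(\dot x_1, \dot x_2)$, and for a fixed height $c$ the indefinite $2\times 2$ form is timelike only on an open double-cone of directions, so a full circle will inevitably pass through spacelike or null directions. The remedy is to choose an elongated ellipse (or a parametrization with varying speed) whose tangent direction stays within the timelike cone throughout, or alternatively to allow $x_n$ to vary slightly along the curve so that the oscillating form tracks the changing tangent direction and keeps $\dot\gamma$ timelike at every point. Once such a loop is exhibited and the inequality $g_{\mathbb{R}^{n}}(\dot\gamma,\dot\gamma)<0$ is checked, homogeneity finishes the proof: translating $\gamma$ by any left translation $L_p$ yields a CTC through the arbitrary point $p$, so every point lies on a closed timelike curve and the manifold is totally vicious.
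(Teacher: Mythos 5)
Your reduction via homogeneity to a single explicit closed timelike curve is exactly the paper's strategy, but the construction you propose cannot be completed as stated, and the gap is not the one you identify. For every fixed height $x_{n}=c$ (and fixed $x_{3},\dots,x_{n-1}$), the $2\times 2$ block of \eqref{main} on the $(x_{1},x_{2})$-plane has determinant $-e^{-4\alpha c}<0$, so it is an \emph{indefinite} constant form of signature $(1,1)$ on that plane. Choosing linear coordinates $(u,v)$ in which it reads $-du^{2}+dv^{2}$, any timelike curve satisfies $|\dot v|<|\dot u|$, hence $\dot u$ never vanishes and $u$ is strictly monotone; no closed regular curve confined to that plane can be everywhere timelike. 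This rules out not only the circle but also your first remedy: an elongated ellipse or a reparametrization changes only the speed, not the set of tangent directions, and the obstruction is the monotonicity of a null coordinate, not the shape of the loop.

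Your second remedy --- letting $x_{n}$ vary along the loop so that the rotating null cone of the $2\times 2$ block tracks the tangent direction --- is indeed the idea that works, but it is left entirely unexecuted, and executing it is the whole substance of the proof. Varying $x_{n}$ adds the positive term $dx_{n}^{2}$ to $g(\dot\gamma,\dot\gamma)$, so one must win a genuine quantitative competition between the negative contribution of the $(x_{1},x_{2})$ block and this positive term. The paper does this with the explicit Lissajous-type loop $c(t)=\beta^{-1}\bigl(3\sin t,\,-\sin 2t,\,0,\dots,0,\,\tfrac{\pi}{2}(1-\cos t)\bigr)$, for which $\beta^{2}g(c',c')=e^{\pi|\alpha|(1-\cos t)/\beta}f(t)+\tfrac{\pi^{2}}{4}\sin^{2}t$ with an explicit $f$, and then verifies $\max f<-\pi^{2}/4$ (numerically) together with the exponential factor being $\ge 1$ (this is where $\alpha\le 0$ enters). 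Without producing such a curve and such an estimate, the proposal establishes only that timelike directions exist pointwise in the $(x_{1},x_{2})$-plane, which is far from exhibiting a closed timelike curve.
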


\begin{proof}
From the homogeneity, 
it suffices to show the existence of a closed timelike curve. 
We define a smooth and regular closed curve as 
\begin{align}
    c(t) = 
\frac{1}{\beta}\left(3
\sin{t}, -\sin{2t}, 0, \ldots, 0, \frac{\pi}{2}(1-\cos{t}) \right) 
\in \mathbb{R}^{n}.
\label{eq:c}
\end{align}
Then we compute that 
\begin{align}
&\beta^{2} g_{\mathbb{R}^{n}}(c'(t), c'(t)) 
= \exp{\left\{\frac{\pi|\alpha|}{\beta}(1-\cos{t} )\right\}}f(t) + \frac{\pi^{2}}{4}\sin^{2}{t}, \label{eq:bgcc} \\
&f(t):=\cos{(\pi\cos{t})}(9\cos^{2}{t} - 4\cos^{2}{2t}) 
+12\sin{(\pi\cos{t})}\cos{2t}\cos{t}. \label{eq:f}
\end{align}
By numerical calculation (see Figure~\ref{fig:f}), the maximal value of $f(t)$ satisfies 
\[ \max{f(t)} = -2.72\cdots < -\frac{\pi^2}{4} = -2.46\cdots . \]
Since the exponential factor in \eqref{eq:bgcc} is greater than or equal to 1, we obtain
\begin{align*}
\beta^{2} g_{\mathbb{R}^{n}}(c'(t), c'(t)) 
&< -\frac{\pi^{2}}{4} + \frac{\pi^{2}}{4} = 0.
\end{align*}
Thus, the velocity vector $c'(t)$ is timelike for any $t \in \mathbb{R}$. 
\end{proof}
\begin{figure}[htbp]
    \centering
    \includegraphics[width=0.7\linewidth]{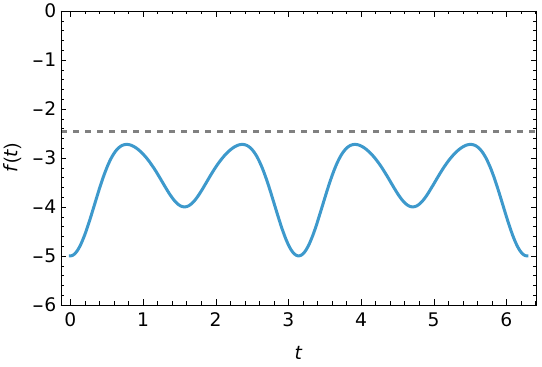}
    \caption{Numerical calculation of $f(t)$ defined in \eqref{eq:f}. The dashed horizontal line shows $-\pi^2/{4} = -2.46\cdots$.}
    \label{fig:f}
\end{figure}
\begin{figure}[htpb]
    \centering
    \includegraphics[width=0.6\linewidth]{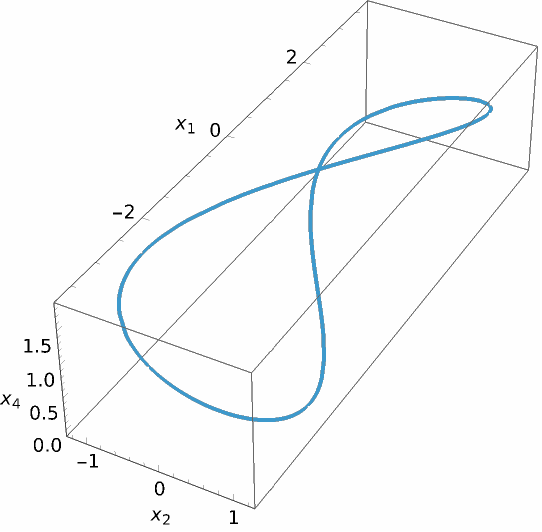}
    \caption{Closed timelike curve (CTC) defined in \eqref{eq:c} in the case $n=4,\ \beta = \sqrt{3}/2$.
    This is the newly found CTC of the Petrov solution \eqref{Petrov_sol}.}
    \label{fig:ctc}
\end{figure}
\begin{figure}[htpb]
  \begin{minipage}[b]{0.45\linewidth}
    \centering
    \includegraphics[scale=0.8]{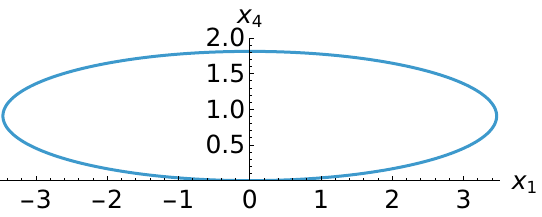}
  \end{minipage}%
  \hspace{0.03\columnwidth}
  \begin{minipage}[b]{0.45\linewidth}
    \centering
    \includegraphics[scale=0.8]{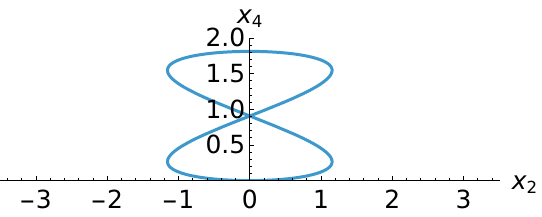}
  \end{minipage}
  \caption{Two-dimensional plots of the CTC in the $x_1$-$x_4$ (left) and $x_2$-$x_4$ (right) planes.}
  \label{fig:2d}
\end{figure}

For the Petrov solution \eqref{Petrov_sol}, the CTC can be obtained by substituting $\beta = \sqrt{3}/2$ in \eqref{eq:c}.
The curve is shown in Figures~\ref{fig:ctc} and \ref{fig:2d}.
It can be regarded as a three-dimensional Lissajous curve.
The existence of a CTC in the Petrov solution has been known only when a particular identification of a coordinate is assumed \cite{MR408733,MR2429726}.
That identification originates from interpreting the Petrov solution as the vacuum solution in the cylindrically symmetric case \cite{MR594564}.
The CTC constructed in this paper does not require such an identification of any coordinate.
It indicates that a CTC can exist for a matter distribution that is not cylindrically symmetric.

\begin{rem} 
We denote the set of Lorentzian metrics on $\mathbb{R}^{n}$ by $\mathrm{Lor}(\mathbb{R}^{n})$, 
and we define the subset of $\mathrm{Lor}(\mathbb{R}^{n})$ as 
\[ \mathcal{V}(\mathbb{R}^{n}) := \{ g_{\mathbb{R}^{n}} \in \mathrm{Lor}(\mathbb{R}^{n}) \mid g_{\mathbb{R}^{n}} \, \textrm{is Ricci-flat} \}. \]
This set could represent the moduli space of vacuum solutions on $\mathbb{R}^{n}$. 
Since the metric \eqref{main} reduces to the Minkowski metric 
\[ g_{\mathbb{R}^{n}} = -dx_{1}^{2} + dx_{2}^{2} + \cdots 
+ dx_{n-1}^{2} + dx_{n}^{2} \]
when $\beta = 0$, the Ricci-flat spacetime under consideration yields a continuous path in $\mathcal{V}(\mathbb{R}^{n})$ connecting a totally vicious spacetime to a globally hyperbolic one with respect to $C^{\infty}$ topology, but not with respect to Whitney fine $C^{\infty}$ topology \cite{MR2906563}. 
Hence, the extreme levels, globally hyperbolic and totally vicious, of the causal hierarchy \cite{MR2436235} are not separated even within $\mathcal{V}(\mathbb{R}^{n})$. 
\end{rem}

\section*{Acknowledgments}
The first author is supported by Waseda University Grants for Special Research Projects (Grant No.~2024C-770 and No.~2025E-043).

\appendix
\section{Computation of Ricci operator}
\label{Ric_op}
By using Proposition~\ref{Ric_curv}, we explicitly compute the components of the Ricci tensor for a Lorentzian almost abelian Lie algebra 
$(\mathfrak{g}, \langle \, , \, \rangle)$ with the associated matrix 
$A = [A^{i}_{j}]_{1 \leq i, j\leq n-1}$ with respect to 
$\{X_{a}\}_{1\leq a \leq n}$. 
We recall that the entries $A^{i}_{j}$ are defined by (\ref{eA}). 
Here we use the indices 
$1 \leq i, j, \ldots \leq n-1$ and $1 \leq a, b, \ldots \leq n$. 

\subsection{Case (a)}
For the metric $\langle \, , \, \rangle = \langle \, , \, \rangle_{\mathrm{a}}$, 
we see $\varepsilon_{1} = \ldots = \varepsilon_{n-1} = -\varepsilon_{n} = 1$, 
where $\varepsilon_{a} = \langle X_{a}, X_{a} \rangle$. 
Then we compute that 
\begin{flalign*}
\mathrm{Ric}(X_{i}, X_{j}) &= 
\frac{1}{2}\langle [X_{n}, X_{i}], [X_{n}, X_{j}] \rangle 
-\frac{1}{2}\sum_{k=1}^{n-1}\langle [X_{n}, X_{k}], X_{i} \rangle 
\langle [X_{n}, X_{k}], X_{j} \rangle& \\
&\qquad +\frac{\mathrm{tr}\, A}{2} ( \langle [X_{n}, X_{i}], X_{j} \rangle + 
\langle [X_{n}, X_{j}], X_{i} \rangle ) &\\
&= \frac{1}{2}\sum_{k=1}^{n-1}A^{k}_{i}A^{k}_{j} 
- \frac{1}{2}\sum_{k=1}^{n-1}A^{i}_{k}A^{j}_{k} 
+ \frac{\mathrm{tr}\, A}{2}\left( A^{i}_{j} + A^{j}_{i} \right) \\
&= \frac{1}{2}\left[ A^{T}A - AA^{T} + (\mathrm{tr}\, A)(A + A^{T}) \right]_{ij}, 
\end{flalign*}
\begin{flalign*}
\mathrm{Ric}(X_{n}, X_{j}) &= 
-\frac{1}{2}\sum_{k=1}^{n-1}\langle [X_{n}, X_{k}], X_{n} \rangle 
\langle [X_{n}, X_{k}], X_{j} \rangle +\frac{\mathrm{tr}\, A}{2} ( \langle [X_{n}, X_{n}], X_{j} \rangle + 
\langle [X_{n}, X_{j}], X_{n} \rangle ) &\\ 
&= 0,
\end{flalign*}
\begin{flalign*}
\mathrm{Ric}(X_{n}, X_{n}) &= 
-\frac{1}{2}\sum_{i=1}^{n-1}\langle [X_{n}, X_{i}], [X_{n}, X_{i}] \rangle 
-\frac{\mathrm{tr}\, A^{2}}{2} = -\frac{1}{2}\sum_{i,k=1}^{n-1}A_{i}^{k}A_{i}^{k} 
- \frac{\mathrm{tr}\, A^{2}}{2} &\\
&= -\frac{1}{2}\left( \mathrm{tr}\, (AA^{T}) + \mathrm{tr}\, A^{2} \right). 
\end{flalign*}
Thus, by using the decomposition $A = S + T$ in (\ref{split}),
we have 
\begin{equation*}
\mathrm{Ric} = 
-\frac{1}{2}
\begin{bmatrix}
    \, \, [A, A^{T}] - (\mathrm{tr}\, A)(A + A^{T}) & \bm{0} \, \\
    \, \, \bm{0}^{T} & \mathrm{tr}\, (AA^{T} + A^{2}) \, \\ 
\end{bmatrix}
= 
\begin{bmatrix}
    \, \, [S, T] + (\mathrm{tr}\, S)S & \bm{0} \, \\
    \, \, \bm{0}^{T} & -\mathrm{tr}\, S^{2} \, \\ 
\end{bmatrix}
. 
\end{equation*}

\subsection{Case (b)}
For the metric $\langle \, , \, \rangle = \langle \, , \, \rangle_{\mathrm{b}}$, 
we see $-\varepsilon_{1} = \varepsilon_{2} = \ldots = \varepsilon_{n} = 1$,  
Then we compute that 
\begin{flalign*}
\mathrm{Ric}(X_{i}, X_{j}) &= 
-\frac{1}{2}\langle [X_{n}, X_{i}], [X_{n}, X_{j}] \rangle 
+\frac{1}{2}\sum_{k=1}^{n-1}\langle [X_{n}, X_{k}], X_{i} \rangle 
\langle [X_{n}, X_{k}], X_{j} \rangle \varepsilon_{k} &\\
&\qquad - \frac{\mathrm{tr}\, A}{2} ( \langle [X_{n}, X_{i}], X_{j} \rangle + 
\langle [X_{n}, X_{j}], X_{i} \rangle ) \\
&= -\frac{1}{2}\sum_{k=1}^{n-1} \varepsilon_{k}A_{i}^{k}A_{j}^{k} + 
\frac{1}{2}\varepsilon_{i}\varepsilon_{j}\sum_{k=1}^{n-1}\varepsilon_{k}A^{i}_{k}A^{j}_{k} - \frac{\mathrm{tr}\, A}{2}(\varepsilon_{i}A^{i}_{j} + \varepsilon_{j}A^{j}_{i}) \\
&= -\frac{1}{2}\left[ A^{T}JA - JAJA^{T}J + (\mathrm{tr}\, A)(JA + A^{T}J) \right]_{ij},
\end{flalign*}
\begin{flalign*}
    \mathrm{Ric}(X_{n}, X_{j}) &= 
\frac{1}{2}\sum_{k=1}^{n-1}\langle [X_{n}, X_{k}], X_{n} \rangle 
\langle [X_{n}, X_{k}], X_{j} \rangle &\\
&\qquad -\frac{\mathrm{tr}\, A}{2} ( \langle [X_{n}, X_{n}], X_{j} \rangle + 
\langle [X_{n}, X_{j}], X_{n} \rangle ) = 0, 
\end{flalign*}
\begin{flalign*}
\mathrm{Ric}(X_{n}, X_{n}) &= 
-\frac{1}{2}\sum_{i=1}^{n-1}\langle [X_{n}, X_{i}], [X_{n}, X_{i}] \rangle \varepsilon_{i} -\frac{\mathrm{tr}\, A^{2}}{2} &\\
&= -\frac{1}{2}\sum_{i,k=1}^{n-1}A_{i}^{k}A_{i}^{k}\varepsilon_{i}\varepsilon_{k} 
- \frac{\mathrm{tr}\, A^{2}}{2} = 
-\frac{1}{2}\left( \mathrm{tr}\, (JAJA^{T} + A^{2}) \right). 
\end{flalign*}
Thus, by using the $J$-decomposition $A = S_{L} + T_{L}$ in (\ref{split}),
we have 
\begin{align*}
\mathrm{Ric} &= 
-\frac{1}{2}
\begin{bmatrix}
    \, \, A^{T}JA - JAJA^{T}J + (\mathrm{tr}\, A)(JA + A^{T}J) & \bm{0} \, \\
    \, \, \bm{0}^{T} & \mathrm{tr}\, (JAJA^{T} + A^{2}) \, \\ 
\end{bmatrix}
\\
&= 
\begin{bmatrix}
    \, -J([S_{L}, T_{L}] + (\mathrm{tr}\, S_{L})S_{L}) & \bm{0} \, \\
    \, \bm{0}^{T} & -\mathrm{tr}\,S_{L}^{2} \, \\
\end{bmatrix}
. 
\end{align*}

\subsection{Case (c)}
For the metric $\langle \, , \, \rangle = \langle \, , \, \rangle_{\mathrm{c}}$, 
we see $\varepsilon_{1} = \ldots = \varepsilon_{n-1}  = -\varepsilon_{n} = 1$, 
where $e_{h} = X_{h} \, (1 \leq h \leq n-2)$ and 
\[ 
e_{n-1} = \frac{1}{\sqrt{2}}(X_{n-1} + X_{n}), \quad 
e_{n} = \frac{1}{\sqrt{2}}(X_{n-1} - X_{n}). 
\] 
Then we compute that 
\begin{align*}
\mathrm{Ric}(X_{i}, X_{j}) &= 
-\frac{1}{4}\{\langle [X_{n-1} + X_{n}, X_{i}], [X_{n-1} + X_{n}, X_{j}] \rangle \\
&\qquad \qquad \qquad \qquad - \langle [X_{n-1} - X_{n}, X_{i}], [X_{n-1} - X_{n}, X_{j}] \rangle \} \\
&\qquad 
+\frac{1}{4}\sum_{k=1}^{n-2}\langle [X_{n-1} + X_{n}, X_{k}], X_{i} \rangle 
\langle [X_{n-1} + X_{n}, X_{k}], X_{j} \rangle \\
&\qquad \qquad 
-\frac{1}{4}\sum_{k=1}^{n-2}\langle [X_{n-1} - X_{n}, X_{k}], X_{i} \rangle 
\langle [X_{n-1} - X_{n}, X_{k}], X_{j} \rangle \\
&\qquad \qquad \qquad 
-\frac{1}{8}\langle [X_{n-1} + X_{n}, X_{n-1} - X_{n}], X_{i} \rangle 
\langle [X_{n-1} + X_{n}, X_{n-1} - X_{n}], X_{j} \rangle \\
&\qquad \qquad \qquad \qquad 
-\frac{\mathrm{tr}\, A}{2} ( \langle [X_{n-1}, X_{i}], X_{j} \rangle + 
\langle [X_{n-1}, X_{j}], X_{i} \rangle ) \\
&= -\frac{1}{2}\langle [X_{n}, X_{n-1}], X_{i} \rangle 
\langle [X_{n}, X_{n-1}], X_{j} \rangle \\
&= -\frac{1}{2}\sum_{k,l=1}^{n-1} A_{n-1}^{k}A_{n-1}^{l} \langle X_{k}, X_{i} \rangle 
\langle X_{l}, X_{j} \rangle \\ 
&= 
\begin{cases}
    \ -\dfrac{1}{2}A_{n-1}^{i}A_{n-1}^{j} \quad (1 \leq i, j \leq n-2), \\
    \ 0 \quad (\mathrm{otherwise}),
\end{cases}
\end{align*}
\begin{flalign*}
\mathrm{Ric}(X_{n}, X_{j}) &= 
\frac{1}{2} \langle [X_{n}, X_{n-1}], [X_{n}, X_{j}] \rangle 
-\frac{1}{2} \langle [X_{n}, X_{n-1}], X_{n} \rangle \langle [X_{n}, X_{n-1}], X_{j} \rangle &\\
&\qquad +\frac{\mathrm{tr}\, A}{2} \langle [X_{n}, X_{n-1}], X_{j} \rangle \\ 
&= \frac{1}{2}\left[ 
\sum_{k=1}^{n-2}A_{j}^{k}A_{n-1}^{k} 
+\sum_{l=1}^{n-1} \left\{ (\mathrm{tr}\, A) - A_{n-1}^{n-1} \right\} 
A_{n-1}^{l} \langle X_{l}, X_{j} \rangle \right] \\ 
&= 
\begin{cases}
    \ \dfrac{1}{2}\left[  
    \displaystyle \sum_{k=1}^{n-2}A_{j}^{k}A_{n-1}^{k} + 
    \left\{ (\mathrm{tr}\, A) - A_{n-1}^{n-1} \right\} A_{n-1}^{j} \right] 
    \quad (j \leq n-2), \\
    \ \dfrac{1}{2} \displaystyle \sum_{k=1}^{n-2}A_{n-1}^{k}A_{n-1}^{k}  \quad (j = n-1),
\end{cases} 
\end{flalign*}
\begin{flalign*}
\mathrm{Ric}(X_{n}, X_{n}) &= 
-\frac{1}{2}\sum_{i=1}^{n-2}\langle [X_{n}, X_{i}], [X_{n}, X_{i}] \rangle 
-\frac{1}{2}\langle [X_{n}, X_{n-1}], X_{n} \rangle 
\langle [X_{n}, X_{n-1}], X_{n} \rangle &\\ 
&\qquad -\frac{\mathrm{tr}\, A^{2}}{2} 
+ (\mathrm{tr}\, A)\langle [X_{n}, X_{n-1}], X_{n} \rangle \\
&= -\frac{1}{2}\left[ \sum_{i,j=1}^{n-2}A_{i}^{j}A_{i}^{j} + 
\left(\mathrm{tr}\, A^{2}\right) + \left(A_{n-1}^{n-1} \right)^{2} - 2(\mathrm{tr}\, A)A_{n-1}^{n-1} \right]. 
\end{flalign*}
Thus, by using the block form 
\[ A = 
\begin{bmatrix}
    A' & \bm{b} \, \\
    \bm{c}^{T} & d \, \\  
\end{bmatrix}  \]
of $A$ in (\ref{split}), and the decomposition $A' = S' + T'$, 
we have 
\begin{align*}
\mathrm{Ric} &= 
-\frac{1}{2}
\begin{bmatrix}
    \, \bm{b}\bm{b}^{T} & \bm{0} 
    & -\left[(\mathrm{tr}\, A')I_{n-2} + A'\right]\bm{b}  \, \\ 
    \, \bm{0}^{T} & 0 & -||\bm{b}||^{2} \, \\
    \, -\bm{b}^{T}\left[(\mathrm{tr}\, A')I_{n-2} + \left(A'\right)^{T}\right] 
    & -||\bm{b}||^{2} &  \mathrm{tr} \left[ A'(A')^{T} + (A')^{2} \right] - 2d(\mathrm{tr}\, A') 
    + 2\bm{b}^{T}\bm{c} \, \\
\end{bmatrix}
\\
&= 
\frac{1}{2} 
\begin{bmatrix}
    \, -\bm{b}\bm{b}^{T} & \bm{0} 
    & \left[(\mathrm{tr}\, S')I_{n-2} + S' + T'\right]\bm{b}  \, \\ 
    \, \bm{0}^{T} & 0 & ||\bm{b}||^{2} \, \\
    \, \bm{b}^{T}\left[(\mathrm{tr}\, S')I_{n-2} + S' - T'\right] 
    & ||\bm{b}||^{2} & 2d(\mathrm{tr}\, S') - 2\mathrm{tr}\, S'\,\!^{2}
    - 2\bm{b}^{T}\bm{c} \, \\
\end{bmatrix}
. 
\end{align*}

\section{Locally symmetric space} \label{loc_symm_sp}
In this section, we show that locally symmetric and Ricci-flat almost abelian Lie groups are flat.
The Lorentzian almost abelian Lie algebra 
$(\mathfrak{g}, \langle \, , \, \rangle)$ for $(G, ds^{2})$ is called 
{\it locally symmetric} if $\nabla R = 0$ holds, 
i.e., for any $X, Y, Z \in \mathfrak{g}$, 
\[ (\nabla_{Z}R)(X,Y) := L_{Z}R(X,Y) - R(L_{Z}X,Y) - 
R(X,L_{Z}Y) - R(X,Y) L_{Z} = 0. \]
Then the corresponding Lorentzian manifold 
$(G, ds^{2})$ is locally symmetric. 
If it is flat, then it is locally symmetric by definition. 

\begin{lem} \label{loc_symm}
For the Lorentzian almost abelian Lie algebra 
$(\mathfrak{g}, \langle \, , \, \rangle)$, 
it is Ricci-flat and locally symmetric if and only if it is flat. 
\end{lem}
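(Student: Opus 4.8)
The plan is to prove the nontrivial implication by contraposition inside the Ricci-flat class, using the classification already obtained. The reverse implication is immediate: if $(\mathfrak{g}, \langle \, , \, \rangle)$ is flat then $R = 0$, so trivially $\nabla R = 0$ (locally symmetric) and $\mathrm{Ric} = 0$ (Ricci-flat, since the Ricci tensor is a contraction of $R$). For the forward implication, assume the algebra is Ricci-flat. By Proposition~\ref{Ric_flat2}, if it is in addition non-flat then it is isometric to exactly one of the three families \eqref{eq:bi}, \eqref{eq:biii}, \eqref{eq:c1}. Hence it suffices to prove that none of these three non-flat families is locally symmetric; that is, in each case I will exhibit generators $X, Y, Z$ with $(\nabla_{Z}R)(X,Y) \neq 0$, where $(\nabla_{Z}R)(X,Y) = L_{Z}R(X,Y) - R(L_{Z}X, Y) - R(X, L_{Z}Y) - R(X,Y)L_{Z}$.

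The computational input is already available: the operators $L_{X_{a}}$ and the curvature operators $R(X_{a}, X_{b})$ for cases (b) and (c) were written down explicitly in the proof of Proposition~\ref{flat}, and case \eqref{eq:bi} is simply the specialization $S_{L} = A,\ T_{L} = O$ of case (b). So the argument reduces to substituting these known operators into the formula for $\nabla_{Z}R$ and reading off a single nonzero entry. For case \eqref{eq:bi} I would take $Z = X_{1}$, $X = X_{1}$, $Y = X_{2}$. Since $L_{X_{n}} = 0$ but $L_{X_{1}}$ sends $X_{1}, X_{2}$ into multiples of $X_{n}$, the four terms do not cancel; a direct computation yields a nonzero operator whose surviving entries are polynomial in $\alpha, \beta$ and do not all vanish because $\beta > 0$. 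This settles the most important (generalized Petrov) case.

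For the two plane-wave families \eqref{eq:biii} and \eqref{eq:c1} one must proceed more carefully, because a plane wave already satisfies $\nabla_{X}R = 0$ for every $X \in \Gamma(V^{\bot})$ (here $V = X_{1} - X_{2}$, respectively $V = X_{n-1}$). Local symmetry can therefore fail only in the single transverse direction, so I would differentiate along a vector outside $V^{\bot}$: the vector $X_{1}$ in case \eqref{eq:biii} (which satisfies $\langle X_{1}, V \rangle = -1$) and $X_{n}$ in case \eqref{eq:c1} (which satisfies $\langle X_{n}, X_{n-1} \rangle = 1$). Computing $(\nabla_{X_{1}}R)(\cdot,\cdot)$, respectively $(\nabla_{X_{n}}R)(\cdot,\cdot)$, from the explicit operators, I expect a nonzero component governed by the nilpotent block of \eqref{eq:biii}, respectively by the non-vanishing obstruction $S'^{2} - S' + [S', T']$ that characterizes non-flatness in case \eqref{eq:c1} (see the proof of Proposition~\ref{flat}). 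Since the algebra is non-flat precisely when this data is nontrivial, the relevant component does not vanish.

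The main obstacle lies in these two plane-wave cases. Because pp-waves have vanishing scalar invariants (VSI) --- every polynomial curvature invariant, including those built from $\nabla R$, vanishes identically --- one cannot detect the failure of local symmetry by any scalar quantity; the non-vanishing must be verified at the level of the tensor $(\nabla_{Z}R)(X,Y)$ itself, by isolating a specific nonzero matrix entry in the single transverse direction. The bookkeeping --- tracking which entry survives after the four terms of $\nabla_{Z}R$ are combined, and confirming that it is controlled by the non-flatness data rather than cancelling --- is the delicate step. By contrast, the case \eqref{eq:bi} computation is unobstructed, since that metric is not a plane wave and $\beta > 0$ forces the transverse derivative to be nonzero.
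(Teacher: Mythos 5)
Your overall plan---use Proposition~\ref{Ric_flat2} to reduce to the three non-flat Ricci-flat families and then exhibit a nonzero component of $\nabla R$ in each---is sound and is essentially the paper's argument read in the contrapositive direction. But the witness you pick for the crucial family \eqref{eq:bi} does not work. With $S_{L}=A$, $T_{L}=O$, $\bm{s}_{1}=(\alpha,\beta,0,\dots,0)^{T}$, $\bm{s}_{2}=(-\beta,\alpha,0,\dots,0)^{T}$, one has $L_{X_{1}}X_{1}=-\alpha X_{n}$ and $L_{X_{1}}X_{2}=\beta X_{n}$, and substituting the operators from the proof of Proposition~\ref{flat} into
\[
(\nabla_{X_{1}}R)(X_{1},X_{2})=[L_{X_{1}},R(X_{1},X_{2})]+\alpha R(X_{n},X_{2})+\beta R(X_{n},X_{1})
\]
gives an operator whose only possibly nonzero entries sit in the last row and last column and equal $\pm 2\alpha(\alpha^{2}+\beta^{2})$. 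This vanishes when $\alpha=0$, and $\alpha=0$ genuinely occurs among Ricci-flat non-flat metrics once $n\ge 5$ (e.g.\ $\lambda_{3}=\beta$, $\lambda_{4}=-\beta$ satisfies \eqref{Ric_flat_cond}); such a metric is still not locally symmetric, so your component simply fails to detect it, and the assertion that the surviving entries ``do not all vanish because $\beta>0$'' is false. The fix is to differentiate a curvature component involving $X_{n}$: the paper shows that $(\nabla_{X_{i}}R)(X_{n},X_{j})=0$ for all $i,j$ forces $S_{L}^{3}=O$, which is impossible for \eqref{eq:bi} because the $2\times 2$ block of $S_{L}$ has eigenvalues $\alpha\pm i\beta\neq 0$.

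For the two plane-wave families your choice of transverse direction ($X_{1}$ for \eqref{eq:biii}, $X_{n}$ for \eqref{eq:c1}) coincides with the paper's, and the observation that local symmetry can only fail transversally is correct; but as written these are stated as expectations rather than verified. In case \eqref{eq:c1} there is a further step you omit: the relevant component $(\nabla_{X_{n}}R)(X_{n},X_{n-1})$ is, up to normalization, $2W-[W,T']$ with $W:=S'^{\,2}-S'+[S',T']$, and non-flatness only says $W\neq O$, not directly that $2W-[W,T']\neq O$. One must add the trace argument: if $2W=[W,T']$ then, since $W$ is symmetric, $2\,\mathrm{tr}\,W^{2}=\mathrm{tr}(W[W,T'])=0$, hence $W=O$. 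With that supplement, and with the corrected witness for \eqref{eq:bi}, your argument closes; without them it has a gap precisely where the paper does its explicit case (b) and case (c) computations.
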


\begin{proof}
There is nothing to prove for case (a). 
For case (b), let $S_{L}$ satisfy the Ricci-flat conditions. 
From $(\nabla_{X_{i}}R)(X_{n}, X_{j}) = 0 \ (1 \leq i, j \leq n-1)$, 
we have $S_{L}^{3} = O$. 
For each case of Proposition~\ref{normal_form}, 
when $S_{L}$ is in case (i), 
$S_{L}^{2} \neq O, \, S_{L}^{3} \neq O$ hold, 
that is, it is neither flat nor locally symmetric. 
When $S_{L}$ is in case (ii), it is flat. 
When $S_{L}$ is in case (iii), 
we see that $S_{L}^{2} \neq O$. 
Thus, it is Ricci-flat but not flat. 
Moreover, we see that $(\nabla_{X_{1}}R)(X_{2}, X_{3}) \neq 0$, 
that is, it is Ricci-flat but not locally symmetric. 
For case (c), there is nothing to check when $d = 0$. 
Let $d = 1$. From $(\nabla_{X_{n}}R)(X_{n}, X_{n-1}) = 0$, 
we have 
\[ 2W = [W, T'], \]
where $W := S'\,\!^{2} - S' + [S', T']$. 
Since $W$ is symmetric and $T'$ is skew-symmetric, 
it follows that $W = O$. 
In particular, since 
$S'\,\!^{2} - S' = O, \, [S', T'] = O$, it is flat. 
\end{proof}

\section{Decomposability} \label{indecom} 

A Lie algebra $\mathfrak{g}$ is called \textit{decomposable} 
if there exist non-trivial Lie subalgebras 
$\mathfrak{g}_{1}, \, \mathfrak{g}_{2} \subset \mathfrak{g}$ such that 
it decomposes into a direct sum $\mathfrak{g} = \mathfrak{g}_{1} \oplus \mathfrak{g}_{2}$. 
Otherwise, $\mathfrak{g}$ is called \textit{indecomposable}. 
A Lie group is defined to be decomposable 
if its Lie algebra is decomposable.

\begin{prop} \label{decom}
For the solution $\mathbb{P}(\lambda_{3}, \ldots, \lambda_{n-1})$, 
it is decomposable if and only if there exists $3 \leq i \leq n-1$ such that $\lambda_{i} = 0$. 
Furthermore, if the number of zeros among $\lambda_i$ is $m$, by reordering $\{\lambda_i\}_{3\le i\le n-1}$, we obtain 
\begin{align}
    \mathbb{P}(\lambda_{3}, \ldots, \lambda_{n-m-1},0,\dots,0)=\mathbb{P}(\lambda_{3}, \ldots, \lambda_{n-m-1})\times \mathbb{E}^m, 
\end{align}
where $\mathbb{E}^m$ denotes a flat Euclidean $m$-space.
\end{prop}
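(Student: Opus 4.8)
The plan is to work entirely at the level of the Lorentzian almost abelian Lie algebra $(\mathfrak{g}, \langle \, , \, \rangle_{\mathrm{b}})$ attached to $\mathbb{P}(\lambda_{3}, \ldots, \lambda_{n-1})$, whose associated matrix is $A$ in \eqref{eq:bi}. First I would record the two structural invariants that govern the argument. From the adjoint representations \eqref{ad_rep} one reads off $[\mathfrak{g}, \mathfrak{g}] = \mathrm{im}\, A \subseteq \mathfrak{a}$, so $\dim [\mathfrak{g}, \mathfrak{g}] = \mathrm{rank}\, A$ and hence $\dim \mathfrak{g}^{\mathrm{ab}} = n - \mathrm{rank}\, A$, where $\mathfrak{g}^{\mathrm{ab}} := \mathfrak{g}/[\mathfrak{g},\mathfrak{g}]$. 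Since the $2 \times 2$ rotation block of $A$ in \eqref{eq:bi} has determinant $\alpha^{2} + \beta^{2} > 0$ (as $\beta > 0$), its kernel is trivial, so $\ker A = \mathrm{span}_{\mathbb{R}}\{ X_{i} : \lambda_{i} = 0 \}$, of dimension equal to the number $m$ of vanishing $\lambda_{i}$. Consequently $\mathrm{rank}\, A = n - 1 - m$ and $\dim \mathfrak{g}^{\mathrm{ab}} = 1 + m$. I would also note that whenever $\lambda_{i} = 0$, the generator $X_{i}$ is central, since $[X_{n}, X_{i}] = \lambda_{i} X_{i} = 0$ and $\mathfrak{a}$ is abelian.

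For the easy implication, suppose $\lambda_{i} = 0$ for some $3 \leq i \leq n-1$. Then $\mathbb{R} X_{i}$ is a one-dimensional central ideal, and the span of the remaining generators is a complementary ideal, namely the almost abelian algebra obtained by deleting the $i$-th row and column of $A$; this realizes a nontrivial Lie-algebra direct sum, so $\mathfrak{g}$ is decomposable. For the converse, suppose $\mathfrak{g} = \mathfrak{g}_{1} \oplus \mathfrak{g}_{2}$ is a nontrivial decomposition. Then $\mathfrak{g}_{1}, \mathfrak{g}_{2}$ are ideals with $[\mathfrak{g}_{1}, \mathfrak{g}_{2}] = 0$, and both are nonzero and solvable because $\mathfrak{g}$ is metabelian. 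A nonzero solvable Lie algebra has nonzero abelianization, and $\mathfrak{g}^{\mathrm{ab}} = \mathfrak{g}_{1}^{\mathrm{ab}} \oplus \mathfrak{g}_{2}^{\mathrm{ab}}$, whence $\dim \mathfrak{g}^{\mathrm{ab}} \geq 2$. Combined with $\dim \mathfrak{g}^{\mathrm{ab}} = 1 + m$, this forces $m \geq 1$, i.e.\ some $\lambda_{i} = 0$. I expect this abelianization dimension count to be the crux: it is the step that makes the converse direction clean, and it is exactly where the invertibility of the rotation block matters, since without it $\mathrm{rank}\, A$ could drop for reasons unrelated to a vanishing $\lambda_{i}$, breaking the identity $\dim \mathfrak{g}^{\mathrm{ab}} = 1 + m$.

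For the quantitative product statement, I would first reorder the spacelike generators $X_{3}, \ldots, X_{n-1}$ so that the $m$ vanishing parameters occupy the last slots, $\lambda_{n-m} = \cdots = \lambda_{n-1} = 0$. By Lemma~\ref{trans} with $P$ a permutation matrix and $c = 1$ this preserves $\langle \, , \, \rangle_{\mathrm{b}}$ and conjugates $A$ accordingly, so it changes nothing up to isometry. Setting these $\lambda_{i}$ to zero in \eqref{main} turns each summand $e^{-2\lambda_{i} x_{n}} dx_{i}^{2}$ into $dx_{i}^{2}$, and since \eqref{main} has no cross terms between the coordinate blocks $\{ x_{1}, \ldots, x_{n-m-1}, x_{n} \}$ and $\{ x_{n-m}, \ldots, x_{n-1} \}$, the metric splits as an orthogonal product. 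The first block is precisely the metric \eqref{main} for $\mathbb{P}(\lambda_{3}, \ldots, \lambda_{n-m-1})$ on $\mathbb{R}^{n-m}$, with $x_{n}$ playing the role of the last coordinate, and the second is the flat metric $\sum_{i=n-m}^{n-1} dx_{i}^{2}$ on $\mathbb{R}^{m}$. Because the deleted parameters vanish, the reduced sums in \eqref{Ric_flat_cond} coincide with the full ones, so $\alpha, \beta$ are unchanged and $\mathbb{P}(\lambda_{3}, \ldots, \lambda_{n-m-1})$ is again Ricci-flat; hence $\mathbb{P}(\lambda_{3}, \ldots, \lambda_{n-m-1}, 0, \ldots, 0) = \mathbb{P}(\lambda_{3}, \ldots, \lambda_{n-m-1}) \times \mathbb{E}^{m}$. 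At the group level this is the product of simply connected Lie groups carrying the product metric, consistent with the fact that $G \cong \mathbb{R}^{n}$ is simply connected and the orthogonal decomposition of $\mathfrak{g}$ into ideals exponentiates to a metric product.
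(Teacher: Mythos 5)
Your proof is correct, and while the forward implication and the product statement follow essentially the same route as the paper, your converse is genuinely different. The paper proves indecomposability (when all $\lambda_{i} \neq 0$) by contradiction: it invokes a structure result for decomposable almost abelian Lie algebras (that any nontrivial decomposition has the form $\mathfrak{g} = \mathfrak{g}' \oplus \mathfrak{a}'$ with $\mathfrak{g}'$ almost abelian and $\mathfrak{a}'$ abelian, citing \cite{MR4462443}), and then rules out both cases $\mathfrak{a}' \subset \mathfrak{a}$ and $\mathfrak{a}' \not\subset \mathfrak{a}$ by explicit bracket computations, using the invertibility of the $2 \times 2$ rotation block and the nonvanishing of the $\lambda_{i}$. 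You instead compute the invariant $\dim \mathfrak{g}^{\mathrm{ab}} = n - \mathrm{rank}\, A = 1 + m$ (valid precisely because the rotation block is nonsingular, as you note) and observe that any nontrivial direct sum of nonzero solvable ideals has abelianization of dimension at least $2$, forcing $m \geq 1$. Your argument is more self-contained (no appeal to the external classification lemma) and isolates the single numerical invariant doing the work; the paper's argument is more explicit and, as a byproduct, exhibits concretely which brackets fail to vanish. One small point worth keeping from your write-up: the observation that deleting the zero parameters leaves the constraints \eqref{Ric_flat_cond} intact, so that $\mathbb{P}(\lambda_{3}, \ldots, \lambda_{n-m-1})$ is again a Ricci-flat solution of the same family, is needed for the product formula to be well posed and is only implicit in the paper.
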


\begin{proof}
Assume that there exists $3 \leq i \leq n-1$ such that $\lambda_{i} = 0$. 
We define two subspaces of $\mathfrak{g}$ as
\[ \mathfrak{g}_{1} := \mathrm{span}_{\mathbb{R}} \{ X_{1}, \ldots, X_{i-1}, X_{i+1}, \ldots, X_{n} \}, \quad 
\mathfrak{g}_{2} := \mathbb{R} X_{i}. \]
Since $\mathfrak{g} = \mathfrak{g}_{1} + \mathfrak{g}_{2}$ and $\mathfrak{g}_{1} \cap \mathfrak{g}_{2} = \{ 0 \}$ hold, 
we have 
\[ [\mathfrak{g}_{1}, \mathfrak{g}_{1}] \subset \mathfrak{g}_{1}, \quad 
[\mathfrak{g}_{1}, \mathfrak{g}_{2}] = \{ 0 \}.\]
Hence, $\mathfrak{g} = \mathfrak{g}_{1} \oplus \mathfrak{g}_{2}$ holds. 
Conversely, we give a proof by contradiction, that is, 
assume that $\lambda_{i} \neq 0$ for any $i = 3, \ldots, n-1$. 
If $\mathfrak{g}$ is decomposable, 
then $\mathfrak{g} = \mathfrak{g}' \oplus \mathfrak{a}'$, 
where $\mathfrak{g}'$ is almost abelian and $\mathfrak{a}'$ is abelian (see \cite[Lemma~1]{MR4462443}). 
Therefore, $[\mathfrak{a}', \mathfrak{g}] = [\mathfrak{a}', \mathfrak{g}'] = \{ 0 \}$. 
If $\mathfrak{a}' \subset \mathfrak{a}$ holds, 
then for $Y \in \mathfrak{a}' \setminus \{ 0 \}$, we can express 
\[ Y = \sum_{i = 1}^{n-1} \mu_{i} X_{i} \in \mathfrak{a} = \mathrm{span}_{\mathbb{R}} \{ X_{1}, \ldots, X_{n-1} \} \]
and there exists $1 \leq j \leq n-1$ such that $\mu_{j} \neq 0$. 
Then for $X_{n} \in \mathfrak{g}$ we have 
\[ [X_{n}, Y] = \mu_{1}(\alpha X_{1} + \beta X_{2}) + \mu_{2}(-\beta X_{1} + \alpha X_{2}) 
+ \mu_{3}\lambda_{3}X_{3} + \cdots + \mu_{n-1}\lambda_{n-1}X_{n-1} 
\neq 0. \]
Thus $[\mathfrak{a}', \mathfrak{g}] \neq \{0\}$. 
If $\mathfrak{a}' \subset \mathfrak{a}$ does not hold, 
then there exist $Z \in \mathfrak{a}', \, W \in \mathfrak{a}$ and $\rho \neq 0$ 
such that $Z = W + \rho X_{n} \in \mathfrak{g}$. 
Then we have 
\[ [Z, X_{3}] = \rho \lambda_{3} X_{3} \neq 0. \]
Thus, $[\mathfrak{a}', \mathfrak{g}] \neq \{0\}$,
which implies that $\mathfrak{g}$ is indecomposable. 
Furthermore, by setting 
\[ m := \# \{i \in \{3, \ldots, n-1 \} \mid \lambda_{i} = 0\} \] 
and reordering $\{\lambda_i\}_{3\le i\le n-1}$, we find that the metric~(\ref{main}) contains the part $dx_{n-m}^{2} + dx_{n-m+1}^{2} + \cdots + dx_{n-1}^{2}$. 
Then $(\mathbb{R}^{m}, \, dx_{n-m}^{2} + \cdots + dx_{n-1}^{2})$ is a flat Euclidean $m$-space, 
which proves the latter claim. 
\end{proof} 


\bibliographystyle{elsarticle-num}
\bibliography{ref}

\end{document}